\let\oldopenbox\openbox
\let\openbox\relax
\let\openbox\oldopenbox
\let\oldBbbk\Bbbk
\let\Bbbk\relax
\let\Bbbk\oldBbbk
\lstdefinestyle{codestyle}{
    commentstyle=\color{codegreen},
    keywordstyle=\color{blue},
    numberstyle=\tiny\color{gray},
    stringstyle=\color{codepurple},
    basicstyle=\fontfamily{pcr}\selectfont\footnotesize,
    breakatwhitespace=false,
    breaklines=true,
    captionpos=b,
    keepspaces=true,
    numbers=left,
    numbersep=5pt,
    showspaces=false,
    showstringspaces=false,
    showtabs=false,
    tabsize=2
}
\newtheorem{theorem}{Theorem}
\newtheorem{lemma}{Lemma}
\newcommand{\reals}{\mathbb{R}}
\newcommand{\integers}{\mathbb{Z}}
\newcommand{\mF}{\mathcal{F}}
\renewcommand{\P}[1][P]{\mathbb{#1}}
\DeclareMathOperator*{\argminn}{arg\,min}
\definecolor{White}{RGB}{255, 255, 255}
\definecolor{Silver}{RGB}{192, 192, 192}
\definecolor{Grey}{RGB}{128, 128, 128}
\definecolor{Black}{RGB}{0, 0, 0}
\definecolor{Tomato}{RGB}{255, 99, 71}
\definecolor{Red}{RGB}{255, 0, 0}
\definecolor{Maroon}{RGB}{128, 0, 0}
\definecolor{Magenta}{RGB}{172, 87, 156}
\definecolor{Yellow}{RGB}{255, 255, 0}
\definecolor{Tan}{RGB}{205, 181, 145}
\definecolor{Olive}{RGB}{128, 128, 0}
\definecolor{Lime}{RGB}{0, 255, 0}
\definecolor{Green}{RGB}{0, 128, 0}
\definecolor{Chartreuse}{RGB}{160, 252, 78}
\definecolor{Aqua}{RGB}{0, 255, 255}
\definecolor{SkyBlue}{RGB}{83, 194, 236}
\definecolor{Teal}{RGB}{0, 128, 128}
\definecolor{Blue}{RGB}{0, 0, 255}
\definecolor{DodgerBlue}{RGB}{85, 154, 248}
\definecolor{RoyalBlue}{RGB}{65, 105, 225}
\definecolor{NavyBlue}{RGB}{25, 25, 107}
\definecolor{TealBlue}{RGB}{84, 113, 171}
\definecolor{Navy}{RGB}{0, 0, 128}
\definecolor{Fuchsia}{RGB}{232, 51, 244}
\definecolor{Purple}{RGB}{128, 0, 128}
\definecolor{Orange}{RGB}{242, 132, 52}
\definecolor{Charteuse}{RGB}{204, 255, 0}
\definecolor{colunstablethreefour}{RGB}{248, 130, 83}
\definecolor{colstablethreefour}{RGB}{136, 183, 213}
\definecolor{colunstablefivesix}{RGB}{166, 135, 186}
\definecolor{colstablefivesix}{RGB}{130, 186, 126}
\definecolor{codegreen}{RGB}{0, 153, 0}
\definecolor{codepurple}{RGB}{148, 0, 209}
\definecolor{python_deep_blue}{RGB}{27, 106, 148}
\definecolor{python_deep_orange}{RGB}{228, 116, 50}
\useunder{\uline}{\ul}{}
\newcommand{\orcid}[1]{\href{https://orcid.org/#1}{\includegraphics[scale=0.25]{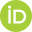}}}
\title{Bi-Level Optimal Control Framework For Missed-Thrust-Design With First-Order Bounds On Maximum Missed-Thrust-Duration
\footnote{Builds on previous work which was presented as Paper AAS 25-697 at the AAS/AIAA Astrodynamics Specialist Conference, Boston, MA, August 10-14 2025.}
}
\author{
Amlan Sinha 
\orcid{0009-0001-4572-7583} 
\footnote{Corresponding Author: \href{mailto:amlans@princeton.edu}{amlans@princeton.edu}} 
\footnote{
Ph.D. Candidate, Mechanical and Aerospace Engineering, Princeton University, Princeton, NJ 08540.
Member AIAA.
} 
and 
Ryne Beeson 
\orcid{0000-0003-2176-0976} 
\footnote{
Assistant Professor, Department of Mechanical and Aerospace Engineering, Princeton University, Princeton, NJ 08540.
Member AIAA.
}
}
\affil{Princeton University, Princeton, NJ 08840}
\newcommand{\asnotes}[1]{\textcolor{DodgerBlue}{[#1]}}
\newcommand{\astodos}[1]{\textcolor{Orange}{[AS on \today @ \currenttime: #1]}}
\newcommand{\asmarker}[1]{\textcolor{Fuchsia}{[AS on \today @ \currenttime: *** #1 ***]}}
\newcommand{\asnotes}[1]{}
\newcommand{\astodos}[1]{}
\newcommand{\asmarker}[1]{}
\begin{document}

\maketitle


\begin{abstract} 
In this paper, we present a bi-level optimal control framework for designing low-thrust spacecraft trajectories with robustness against missed-thrust-events. 
The \emph{upper-level} (UL) problem generates a nominal trajectory assuming full control authority, while each \emph{lower-level} (LL) problem computes the optimal recovery maneuver following a missed-thrust-event along the nominal solution. 
Under suitable regularity conditions ensuring uniqueness and smoothness of the LL response, the hierarchy admits a single-level reformulation by embedding the LL first-order optimality conditions within the UL constraints.
We further establish a robustness certificate, which provides an upper bound on the maximum admissible missed-thrust-duration for which the structural assumptions remain valid for the LL problem. 
The bound depends explicitly on precomputable dynamical quantities along the nominal solution, enabling rapid evaluation over large ensembles without iterative solves. 
Numerical experiments show that while the certificate identifies when modeling assumptions are valid, it does not fully characterize recoverability after missed-thrust-events.
A finite-horizon controllability-energy analysis is therefore used to interpret recovery beyond the theoretical bounds.
Collectively, these results provide a deterministic, certifiable approach for incorporating robustness directly into trajectory design, replacing post-hoc margin allocation techniques with formal guarantees.
\astodos{200 Words}
\asnotes{If you want to remove the notes, please recompile the `main.tex' file with the boolean \texttt{revisionmarkups} turned off}
\end{abstract}
\section*{Nomenclature}

{\renewcommand\arraystretch{1.0}
\noindent\begin{longtable*}{@{}l @{\quad=\quad} l@{}}

$\Omega$                    & sample space of uncertainty realizations \\
$\mF$                       & $\sigma$-algebra of measurable events \\
$(\mF_t)_{t \ge 0}$         & filtration representing information over time \\
$\P$                        & probability measure on $(\Omega,\mF)$ \\
$\omega$                    & uncertainty realization / index for a missed thrust event \\
$\tau_1(\omega)$            & start time of MTE under realization $\omega$ \\
$\tau_2(\omega)$            & end time of MTE under realization $\omega$ \\
$\Delta \tau(\omega)$       & missed thrust duration for realization $\omega$ \\

$\bm{\xi}$                  & spacecraft state (reference or realization) \\
$\bm{\xi}^\dagger$          & reference (nominal) state trajectory \\
$\bm{\xi}^\omega$           & realization state trajectory \\
$\tilde{\bm{\xi}}$          & state deviation $\tilde{\bm{\xi}}=\bm{\xi}^\omega - \bm{\xi}^\dagger$ \\
$\bm{u}$                    & control input \\
$\bm{u}^\dagger$            & reference (nominal) control input \\
$\bm{u}^\omega$             & realization control input \\
$\tilde{\bm{u}}$            & control deviation $\tilde{\bm{u}}=\bm{u}^\omega - \bm{u}^\dagger$ \\
$\bm{f}$                    & uncontrolled (natural) dynamics \\
$\bm{g}$                    & controlled dynamics \\
$A_t$                       & Jacobian $\nabla_{\bm{\xi}} \bm{f}$ along reference trajectory at time $t$ \\
$B_t$                       & Jacobian $\nabla_{\bm{u}} \bm{f}$ along reference trajectory at time $t$ \\
$r(t,\bm{\xi})$             & second-order Taylor remainder of nonlinear dynamics \\
$\Phi(t,s)$                 & state-transition matrix of linearized dynamics \\

$t_0$                       & initial time \\
$t_f$                       & final time \\
$T$                         & time of flight \\

$J$                         & objective function (UL cost) \\
$\phi$                      & terminal cost \\
$\mathcal{L}$               & running cost \\
$\bm{x}$                    & NLP decision vector \\
$\bm{c}$                    & NLP constraint vector \\
$\mathcal{E}$               & index set of equality constraints \\
$\mathcal{I}$               & index set of inequality constraints \\

$\bm{\mu}$                  & LL costate vector (adjoint variable) \\
$Q,\,Q_f$                   & state and terminal cost weighting matrices \\
$R$                         & control penalty matrix \\
$O_\omega$                  & outage interval(s) for realization $\omega$ \\
$A_\omega$                  & control-available interval(s) for realization $\omega$ \\

$\alpha$                    & uniform bound on $\|\nabla_{\bm{\xi}} \bm{f}\|$ along reference \\
$\beta$                     & uniform bound on $\|\nabla_{\bm{u}} \bm{f}\|$ along reference \\
$H$                         & uniform bound on $\|\nabla^2_{\bm{\xi}} \bm{f}\|$ \\
$u^\dagger_{\min}$, $u^\dagger_{\max}$ & lower/upper bounds on nominal thrust magnitude \\
$f_{u^\dagger}^{\min}$, $f_{u^\dagger}^{\max}$ & lower/upper bounds on effective forcing amplitude $\beta u^\dagger_{\min/\max}$ \\
$\delta$                    & admissible safe state-space error radius \\
$\delta\tau_{\max}$         & maximum allowable missed thrust duration ensuring linear-approximation validity \\

\multicolumn{2}{@{}l}{\textbf{Superscripts}}\\
$\dagger$                   & reference (nominal) solution \\
$\omega$                    & realization solution \\

\multicolumn{2}{@{}l}{\textbf{Subscripts}}\\
$i,j$                       & indices for segments, constraints, nodes, etc. \\
$t$                         & time variable or time index \\

\end{longtable*}
}

\section{Introduction} \label{sec:introduction}

\lettrine{U}{niquely} high effective mass ratios render \emph{low-thrust} (LT) propulsion systems invaluable to many large-scale flagship scientific missions (e.g., Hayabusa, Dawn, Bepi-Colombo, Psyche) as well as to many small-scale technology demonstration missions (e.g., NEA Scout, Mars Cube One, Lunar Flashlight). 
Although these systems offer substantial advantages, their benefits are tempered by their susceptibility to temporary losses in control authority, which, given the inherent limitations in the attainable maximum thrust, can significantly degrade mission performance.
When operational anomalies (e.g., mechanical failure, power systems anomalies, or collision with space debris) arise, a spacecraft typically enters a protective safe mode wherein all thruster operations are shut down.
If the resulting downtime intersects with a scheduled thrust arc, the resulting phenomenon is classified as a \emph{missed thrust event (MTE)}.
Because LT solutions typically exhibit long-duration thrust arcs, there is a higher likelihood of such interruptions occurring, thereby making them relatively common in historical LT missions~\cite{imken_modeling_2018}.
Unless specifically accounted for during preliminary mission design, such interruptions can significantly degrade mission performance or even result in mission failure in some cases, especially when critical maneuvers (e.g., those occurring during flybys) become impossible. 
These risks underscore the importance of incorporating robustness against such unforeseen thruster outages directly into the trajectory optimization process during the preliminary mission design phase.
\asnotes{Introducing the missed thrust problem}

The robust optimal control paradigm seeks control policies which yield feasible, or near-optimal solutions, even in the presence of various uncertainties (e.g., \emph{aleatoric uncertainties} such as navigational errors, \emph{epistemic uncertainties} such as imperfect parameter knowledge, and \emph{maneuver execution errors}). 
Although the topic is relevant to the broader controls community in general, it is particularly crucial in aerospace applications, where even the slightest deviations can jeopardize mission success. 
In current practice, robustness is often imposed a posteriori by adding empirical margins, lowering duty cycles, or inserting coast arcs at dynamically sensitive locations along the trajectory during the preliminary mission design phase, frequently at the expense of optimality.
Recent research has thus increasingly focused on incorporating uncertainty directly within the optimization problem itself. 
Existing approaches in this direction can be broadly classified into two main categories: \emph{deterministic} and \emph{stochastic}. 
While deterministic approaches typically consider a specific uncertainty realization (e.g., worst case), stochastic approaches incorporate a probabilistic representation (e.g., chance constraints).
In either case, however, the resulting framework enables systematic balancing between robustness and performance to varying degree of fidelity and success.
Notable stochastic methods in the literature include stochastic differential dynamic programming, which leverages linear feedback control policies and employs unscented transform methods for uncertainty propagation~\cite{ozaki_stochastic_2018}; tube stochastic optimal control, which generalizes the prior method to nonlinear dynamics and constraints by sequentially approximating pertinent stochastic processes using tractable surrogates (e.g., Gaussian)~\cite{ozaki_tube_2020}; and convex optimization frameworks which have been successfully applied to both impulsive and LT trajectory design problems~\cite{oguri_robust_2021,oguri_stochastic_2022}. 
Covariance steering methodologies~\cite{okamoto_optimal_2018,yi_nonlinear_2020} and distribution agnostic belief optimal control approaches~\cite{greco_direct_2020,greco_robust_2022} further enrich the stochastic toolbox.
While these methods are quite effective in handling aleatoric and epistemic uncertainties, they fall short in addressing maneuver execution errors, which constitute a critical challenge in LT missions. 
Consequently, this study adopts the term `robust problems' to denote a particular category of trajectory optimization problems where the goal is to construct solutions with explicit robustness against maneuver execution errors.
\asnotes{Describing how the missed thrust problem fits within the general robust optimal control problem paradigm}

Depending on the respective approaches, two principal schools of thought emerge in existing research: \emph{missed thrust analysis} and \emph{missed thrust design}.
In missed thrust analysis, a nominal solution is first constructed without any consideration for MTEs, after which any necessary robustness adjustments are incorporated within the baseline trajectory.
In practice, such adjustments include redesigning the nominal solution with carefully chosen coast phases, or heuristically lower duty cycles, as exemplified by the Dawn and Psyche missions which incorporated such measures to mitigate performance degradations~\cite{rayman_coupling_2007,oh_analysis_2008,madni_missed_2020}. 
Building on this idea, Laipert et al. investigated how multiple missed thrust occurrences can impact a nominal solution through extensive Monte Carlo simulations introducing introducing simple yet useful metrics such as lateness and propellant margin to quantify performance degradation~\cite{laipert_automated_2015,laipert_monte_2018}.
However, these retrospective adjustments inherently decouple the uncertainty quantification process from the trajectory optimization process, potentially shifting the sensitivity to a different location along the trajectory rather than fully eliminating risk.
In contrast, missed thrust design approaches directly integrate a robustness criterion into the optimization problem, either as part of its objective or constraint functions. 
The state-of-the-art deterministic missed thrust design strategies extend the missed thrust problem into a higher dimensional space, in which a \emph{reference} trajectory and multiple \emph{realization} trajectories (each representing a potential recovery path from a discrete missed thrust occurrence) are simultaneously optimized~\cite{mccarty_missed_2020,venigalla_low_2020}, with explicit constraints on relevant performance metrics e.g., missed thrust recovery margins.
Although these approaches tend to be quite effective, their computational complexity scales rapidly with the number of missed thrust occurrences.
Accurately capturing the intrinsic stochasticity would require considering an infinite set of possible realizations, rendering such approaches computationally prohibitive.
Therefore, with such approaches, there is an important tradeoff between \emph{modeling fidelity}, i.e., how thoroughly we explore failure modes, and \emph{computational efficiency}, i.e., how quickly we can solve these problems, highlighting the need for an efficient computational framework to solve missed thrust design problems using similar deterministic approaches. 
Notable stochastic missed thrust design strategies, on the other hand, include two-level stochastic optimal control formulations treating missed thrust occurrences as purely stochastic processes~\cite{olympio_designing_2010}, but remain relatively underexamined within the research community.
Recent research efforts have also explored data-driven methods, including neural networks~\cite{rubinsztejn_neural_2020,izzo_real-time_2021} and reinforcement learning controllers~\cite{miller_low-thrust_2019,zavoli_reinforcement_2021}, to learn optimal recovery maneuvers. 
However, such approaches typically offer only local validity and may not generalize well to chaotic multibody gravitational environments.
An alternative and promising research direction in robust trajectory design involves leveraging dynamical structures inherent to multibody gravitational environments. 
In an early study by Alizadeh and Villac, it was shown that by rewriting the objective function to penalize the deviation of solutions from the natural dynamics, it was possible to induce robustness in the resulting solutions~\cite{alizadeh_sensitivity_2013}.
Building on this idea, Sinha and Beeson recently explored the statistical correlation between robust solutions and invariant manifolds in three body problems, establishing a quantitative relationship between the two and highlighting their potential in robust trajectory design~\cite{amlans_dynamical_2025j}.
Irrespective of robustness considerations, efficient exploration of the solution space remains a critical challenge in robust trajectory design due to their inherently high dimensionality. 
The efficacy of gradient-based solvers depends on providing initial guesses which lie within basins of attraction of the local optima.
In practice, determining effective distributions for these initial guesses poses considerable difficulty especially when mission parameters may evolve on a rapid cadence during the preliminary mission design phase. 
Leveraging insights into the geometric structure within robust solutions, Sinha and Beeson introduced a novel initial-guess generation strategy that projects solutions from simpler surrogate robust problems onto the more complex robust problem in consideration, preserving inherent dynamical relationships while significantly enhancing convergence speed and solution diversity~\cite{amlans_initial_2025j}. 
Nevertheless, each method above suffers from fundamental challenges, including limited scalability, reliance on local optimization, or overly restrictive assumptions regarding uncertainty characterization.
Building on the theoretical and algorithmic foundations in the bi-level optimal control (BOC) literature, this study aims to at least partially address these limitations by proposing a novel algorithmic framework which enables efficient solutions to missed thrust problems using a deterministic missed thrust design approach.
\asnotes{Distinguishing between missed thrust analysis and missed thrust design, and (briefly) describing our own work}

The BOC problem is a hierarchical mathematical problem in which an \emph{upper level} (UL, the \emph{leader}) problem specifies the overarching objectives and constraints (e.g., mission objectives, operational constraints, and an admissible class of control policies), while a \emph{lower level} (LL, the \emph{follower}) solves another optimal control problem conditional on the UL decision and, in the most general case, subject to additional constraints specific to the LL.
A solution is admissible only if there exists an optimal response at the LL which, together with the UL decision, satisfies all constraints at both levels.
The versatility of this framework is evidenced by its applications across various dynamical systems, including inverse optimal control problems \asnotes{(insert appropriate citations)}, robotic locomotion problems \asnotes{(insert appropriate citations)}, and hierarchical control tasks within complex engineering systems \asnotes{(insert appropriate citations)}.
\asnotes{Introducing bi-level optimal control paradigm}

Foundational results on existence, uniqueness and algorithmic techniques are well established in the bi-level optimization literature~\cite{dempe_2002_foundations,dempe_2020_bilevel}.
Broadly, the algorithmic approaches can be classified into two categories: \emph{explicit} or \emph{implicit} methods~\cite{dempe_2002_foundations}. 
Explicit methods reformulate the bi-level optimization problem into a single-level problem by embedding the LL optimality conditions as explicit constraints in the UL problem, thereby allowing the use of off-the-shelf numerical solvers. 
Common examples include \emph{Karush-Kuhn-Tucker} (KKT) or \emph{Fritz-John} (FJ) reformulations, value function approaches, and penalty or complementarity relaxations. 
However, as explored in more depth in \S~\ref{subsection:methodology:single-level reformulation ll optimality conditions}, such reformulations can introduce computational complexities due to their inherently non-smooth nature.
Conversely, implicit approaches describe the LL optimal solution as an implicit function of the UL decision variables, so that relevant gradients can be computed via sensitivity analyses or surrogate models. 
Examples include \emph{implicit function theorem} (IFT) differentiation, adjoint or variational methods for optimal control problems, surrogate modeling, etc. 
When the LL sub-problem is well-posed, these approaches typically yield smoother numerical behavior, but their effectiveness depends critically on the availability of a smooth implicit representation of the LL solution, which can be challenging when the LL sub-problem itself is an optimal control problem.
\asnotes{Describing some methods to solve bi-level optimal control problems}

Regardless of the algorithmic approach, however, a bi-level framework offers a natural and flexible way to handle the missed thrust problem. 
Within this framework, the UL problem optimizes the nominal spacecraft trajectory assuming full control authority (i.e., without any thruster interruptions) with mission objectives (e.g., propellant usage, flight time, etc.) and operational constraints (e.g., path constraints, boundary conditions, etc.).
On the other hand, the LL sub-problem seeks a recourse after a missed thrust scenario by solving a subordinate optimal control problem which returns the best recovery action conditional on the nominal solution by the UL problem. 
By embedding either a value function which bounds worst-case performance loss or a recovery margin which quantifies distance to recourse infeasibility within the UL objective and/or constraints, it is possible to guarantee robustness by construction.
The bi-level framework, therefore, supports verifiable robustness certificates, replacing post-hoc empirical checks with ex-ante guarantees.

The bi-level framework confers several algorithmic advantages with direct computational benefits. 
First, it offers substantial modeling flexibility which can help balance physical fidelity and computational tractability e.g., if there are LL sub-problem instances which represent minor missed thrust occurrences, they can be adaptively pruned to improve computational efficiency without any significant change to the overarching problem structure.
Second, a direct transcription of the bi-level formulation naturally produces sparse nonlinear programs that are well-suited to first- and second-order methods (e.g., sequential quadratic programming), thereby enabling systematic exploitation of sparsity patterns and block structure.
Third, when admissible, single-level reformulations permit the direct use of standard off-the-shelf nonlinear programming solvers, thereby reducing implementation effort and leveraging well-established numerical reliability.
Collectively, this paradigm delivers formal robustness guarantees along with a scalable algorithmic framework to enable efficient solutions to the missed thrust design problems via deterministic approaches.
\asnotes{Motivating the use of bi-level optimal control to solve the missed thrust design problem due to the potential theoretical and algorithmic benefits in closing existing research gaps}

By posing the missed thrust design problem as a bi-level optimal control problem, we establish an efficient framework with formal robustness guarantees. 
The principal contributions of this work are as follows:
\begin{enumerate}[label=\arabic*.]
    \item 
    We formulate the missed thrust design problem as a BOC problem, where the UL problem generates the nominal reference trajectory under full control authority, and each LL sub-problem characterizes the optimal recovery maneuver following an MTE. 
    Under suitable regularity conditions, we further show that this BOC formulation admits a single-level representation by augmenting the UL constraints with the first-order optimality conditions of the LL sub-problems.
    \item We introduce a robustness certification in two complementary phases.
    First, we establish a theoretical upper bound on the maximum admissible missed thrust duration for which the single-level reformulation remains appropriate, thereby providing a rigorous robustness certificate that directly connects the severity of the MTE with the structural properties of the underlying optimal control problem.
    Second, we introduce a finite-horizon controllability energy analysis that serves as a diagnostic tool to interpret recovery feasibility after the MTE, particularly for regimes where the theoretical conditions for linear model are no longer satisfied.
    \item We validate the theoretical results on a nonlinear relative motion scenario involving a chief-deputy spacecraft pair, demonstrating the applicability of the proposed framework and verifying the tightness of the derived robustness bounds. \asnotes{The three main contributions of the paper}
\end{enumerate}

The remainder of the paper is organized as follows.
In \S~\ref{sec:problem formulation}, we introduce the general missed thrust design problem which is central to this study.
In \S~\ref{sec:methodology}, we detail the methodology for posing the missed thrust design problem as a bi-level problem and demonstrate how, under suitable regularity conditions, it can be expressed as a single-level optimal control problem.
In \S~\ref{sec:theoretical bounds on maximum missed thrust duration}, we present a critical theoretical result which establishes a theoretical upper bound on the maximum missed thrust duration under which these regularity conditions remain valid.
In \S~\ref{sec:experimental results relative spacecraft motion}, we present a numerical experiment in which this framework is applied to a rendezvous problem between two spacecrafts.
Finally, in \S~\ref{sec:conclusion}, we summarize the main contributions of this work and outline potential directions for future research.
\asnotes{The organization of paper}
\astodos{Need to revise}
\section{Problem Formulation} \label{sec:problem formulation}

In this section, we formulate the missed thrust design problem as a robust optimal control problem. 
A full derivation from the general robust optimal control framework can be found in Sinha and Beeson~\cite{amlans_dynamical_2025j}. 
For the reader's convenience, however, the specific problem formulation relevant to the present study is summarized below.

Let $(\Omega, \mF, \P)$ be a probability space and $\omega \in \Omega$ a random sample.
We aim to find an extremal control solution $\bm{u}^* \in \mathcal{U}^\Omega$, with $\mathcal{U}^\Omega$ an admissible control set, to minimize the Bolza-type cost functional,
\begin{align} \label{eq:robust MTD}
    \min_{\bm{u} \in \mathcal{U}^\Omega} \{ J(\bm{u}^\dagger) \equiv \phi(\bm{\xi}^\dagger_1) + & \int_0^1 \mathcal{L}(s, \bm{\bm{\xi}^\dagger_s}, \bm{u}^\dagger_s) ds \},
\end{align}
such that the dynamical constraints with deterministic boundary conditions, $\Xi_0, \Xi_1$ given by Eq.~\eqref{eq:robust MTD reference dynamics equation} is satisfied for the reference solution, 
\begin{align} \label{eq:robust MTD reference dynamics equation}
    \bm{\xi}^\dagger_t = \bm{\xi}^\dagger_0 + \int_0^t \bm{f}(s, \bm{\xi}^\dagger_s) ds &+ \int_0^t \bm{g}(s, \bm{\xi}^\dagger_s, \bm{u}^\dagger_s) ds \quad \forall t \in [0, 1], \quad \xi^\dagger_0 \in \Xi_0, \quad \xi^\dagger_1 \in \Xi_1. 
\end{align}
and by Eq.~\eqref{eq:robust MTD realization dynamics equation} is satisfied for the realization solution(s),
\begin{align} \label{eq:robust MTD realization dynamics equation}
    \bm{\xi}^\omega_t = \bm{\xi}^\dagger_0 + \int_0^t \bm{f}(s, \bm{\xi}^\omega_s) ds &+ \int_0^{\tau_1(\omega) \wedge t} \bm{g}(s, \bm{\xi}^\omega_s, \bm{u}^\dagger_s) ds + \sum_{i \in \integers_+} \int_{\tau_{2i}(\omega) \wedge t}^{\tau_{2i + 1}(\omega) \wedge t} \bm{g}(s, \bm{\xi}^\omega_s, \bm{u}^\omega_s) ds, \nonumber \\
    & \quad \forall t \in [0, 1], \quad \forall \omega \in \Omega, \quad \xi^\dagger_0 \in \Xi_0, \quad \xi^\omega_1 \in \Xi_1. 
\end{align}
In the equations above, the reference trajectory $\bm{\xi}^\dagger$, generated by applying the reference control solution $\bm{u}^\dagger$, represents the nominal path given by Eq.~\eqref{eq:robust MTD reference dynamics equation}. 
While these terms define the total nominal cost, that cost is intrinsically coupled to a family of realization solutions $\bm{\xi}^\omega$, each induced by a specific missed thrust occurrence.
The realization solutions evolve according to Eq.~\eqref{eq:robust MTD realization dynamics equation}, and satisfy the same boundary conditions as the reference solution. 
The system dynamics for both reference and realization solutions are governed by the functions $\bm{f}$ and $\bm{g}$, where $\bm{f}$ captures the natural dynamics and $\bm{g}$ captures the forcing dynamics. 
The optimal control problem is posed over the normalized time domain $[0, 1]$.
Figure~\ref{fig:problem_setup} illustrates the problem setup schematically, showing the nominal reference trajectory $\xi^\dagger$ and a representative realization trajectory $\xi^\omega$ which experiences an MTE beginning at time $t=\tau_1(\omega)$ and ending at time $t=\tau_2(\omega)$, together with their respective throttle profiles.
\asnotes{Describe the problem setup}

\ifthenelse{\boolean{includefigures}}
{
    \begin{figure}[!htb]
        \centering
        \begin{subfigure}[b]{0.45\textwidth}
            \centering
            \includegraphics[keepaspectratio, width=\textwidth]{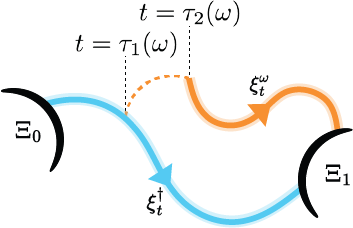}
            \caption{
            Schematic of the restricted robust problem showing \textcolor{SkyBlue}{reference} and \textcolor{Orange}{realization} trajectories
            }
            \label{fig:problem_setup_trajectory}
        \end{subfigure}
        \hfill
        \begin{subfigure}[b]{0.45\textwidth}
            \centering
            \includegraphics[keepaspectratio, width=\textwidth]{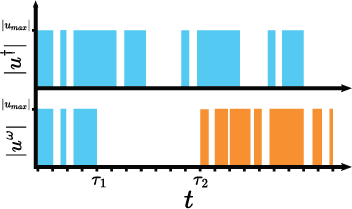}
            \caption{
            Schematic of corresponding throttle profiles with thruster outage during interval $t \in [\tau_1, \tau_2]$
            }
            \label{fig:problem_setup_throttle}
        \end{subfigure}
        \caption{
        Schematic of the restricted robust problem with one allowable MTE initiation location
        }
        \label{fig:problem_setup}
    \end{figure}
}
{
}

Uncertainty is introduced through the random variable $\tau : \Omega \rightarrow [0, 1]$, which encodes when an MTE begins and when it ends. 
Formally, $\tau \equiv \{\tau_i(\omega) \in \mathbb{R}_+ \ | \ \tau_i < \tau_{i+1}, \ \forall i \in \mathbb{Z}_+, \ \omega \in \Omega\}$ defines a strictly increasing sequence of event times. 
The sample space is identified with the unit circle, $\Omega \simeq S^1 \simeq [0, 1]$, and $\wedge$ denotes the minimum operator, $a \wedge b \equiv \min(a, b)$. 
For a given realization $\omega \in \Omega$, the corresponding MTE has start time $\tau_{2i+1}(\omega)$ and end time $\tau_{2(i+1)}(\omega)$, and it occurs whenever $\tau_{2i+1}(\omega) < 1$.
Therefore, the duration of the MTE is given by $\Delta\tau_i(\omega) \equiv \tau_{2(i+1)}(\omega) - \tau_{2i+1}(\omega) > 0$.
We define the admissible control space as $\mathcal{U} \equiv PC([0, 1]; \mathbb{R}^n)$, the space of piecewise continuous functions over $[0, 1]$. 
To incorporate uncertainty, we extend this to $\mathcal{U}^\Omega \equiv \mathcal{U}^{S^1} \simeq \prod_{S^1} \mathcal{U}$, representing control functions indexed by $\omega \in \Omega$. 
We assume $\tau_1(0) = \tau_1(1) > 1$, so that $\omega \in \{0,1\}$ corresponds to the reference solution, with control and state denoted $\bm{u}^\dagger$ and $\bm{\xi}^\dagger$. 
For all other $\omega \in (0,1)$, the control is denoted $\bm{u}^\omega$, and the corresponding state as $\bm{\xi}^\omega$.
\asnotes{Describe the uncertainty characterization}

With an eye toward keeping the problem computationally tractable, we impose the following additional simplifying assumptions.
First, for each realization $\omega \in \Omega$, we consider exactly one MTE.
In other words, each realization $\omega \in \Omega$ satisfies $\tau_3(\omega) > 1$.
Second, we restrict the admissible start times to at most one, aligned with the start of a thrust segment. 
Specifically, we partition the space $(0,1) \subset S^1 = \Omega$ into $N$ intervals $\{E_i\}_{i=1}^N$, and require that all realizations within the same interval share the same initiation time, i.e., $\tau_1(\omega_0) = \tau_1(\omega_1)$ for all $\omega_0,\omega_1 \in E_i$.
Finally, we assume that each event disables an integer number of thrust segments, resulting in a quantized event duration. 
Accordingly, we further partition each interval $E_i$ into $M$ sub-intervals $\{E_{i,j}\}_{j=1}^M$, where $E_{i,j}$ corresponds to an event disabling exactly $j$ thrust segments.
Hence, for any $\omega \in E_{i,j}$, the durations $\Delta \tau(\omega) = \tau_2(\omega) - \tau_1(\omega) = j (\tau_S^{\omega} / N^{\omega} )$, where $\tau_S^{\omega}$ and $N^{\omega}$ denotes the shooting time and the number of segments respectively for the realization $\omega$.
\asnotes{Describe the simplifying assumptions}
\section{Methodology} \label{sec:methodology}

In \S~\ref{subsection:methodology:missed thrust design a bi-level optimal control formulation}, we first cast the missed thrust design problem into its bi-level form, where the UL selects a nominal solution and, for each realization $\omega \in \Omega$ corresponding to a distinct missed thrust occurrence, the LL computes an optimal recovery solution $(\bm{\xi^\omega},\bm{u^\omega})$ conditioned on the nominal solution $(\bm{\xi^\dagger},\bm{u^\dagger})$. 
In \S~\ref{subsection:methodology:single-level reformulation ll optimality conditions}, we demonstrate that it is possible to collapse the bi-level structure into a single-level structure under certain mathematical conditions by appending the LL first order optimality conditions to the UL constraints, as is done in this work.
Finally, in \S~\ref{subsection:methodology:transcription into a nonlinear program}, we transcribe the infinite dimensional continuous time optimal control problem to a finite-dimensional nonlinear program in a general setting (with more details to follow in \S~\ref{subsec:experimental results relative spacecraft motion:transcription into a nonlinear program} for the specific transcription configuration in our study), which can then solved by an off-the-shelf numerical solver.

\subsection{Missed Thrust Design: A Bi-level Optimal Control Approach} \label{subsection:methodology:missed thrust design a bi-level optimal control formulation}

A general bi-level optimization problem comprises an UL problem and (single/multiple) LL sub-problems. 
Its hallmark characteristic is a hierarchical structure wherein the admissible set for the UL problem is characterized not only by explicit constraints on the UL decision variables but also by implicit constraints induced by the optimal solution set of the LL sub-problem(s):
\begin{equation}
\label{eq:general bi-level optimization problem}
    \begin{aligned}
        \min_{\bm{x}\in \mathscr{X}, \bm{y}\in \mathscr{Y}}\quad & \{\mathscr{F}^{\text{UL}}(\bm{x},\bm{y}) \}\\
        \text{s.t.} \quad & \mathscr{G}^{\text{UL}}_i(\bm{x},\bm{y}) \le 0, \quad i=1, \dots, I^\text{UL}, \\
        & \mathscr{H}^{\text{UL}}_j(\bm{x},\bm{y}) = 0, \quad j=1, \dots, J^\text{UL}, \\
        & \bm{y} \in \argminn_{\bm{\tilde{y}} \in \mathscr{Y}(\bm{x})}\bigl\{\mathscr{F}^{\text{LL}}(\bm{x},\bm{\tilde{y}}) \ \mid \ \mathscr{G}^{\text{LL}}_{i'}(\bm{x},\bm{\tilde{y}}) \le 0, i'=1,\dots,I^\text{LL}; \quad \mathscr{H}^{\text{LL}}_{j'}(\bm{x},\bm{\tilde{y}})=0, j'=1,\dots,J^\text{LL}\bigr\}.
    \end{aligned}
\end{equation}
Here, $\bm{x} \in \mathscr{X} \subseteq \mathbb{R}^{n_x}$ represents the UL decision variables, and $\bm{y} \in \mathscr{Y} \subseteq \mathbb{R}^{n_y}$ represents the LL decision variables. 
The function $\mathscr{F}^{\text{UL}} : \mathbb{R}^{n_x} \times \mathbb{R}^{n_y} \to \mathbb{R}$ describes the UL objective function. 
The inequality constraints $\mathscr{G}^{\text{UL}}_i(\bm{x},\bm{y}) \leq 0$ for $i=1,\dots,I^\text{UL}$ and equality constraints $\mathscr{H}^{\text{UL}}_j(\bm{x},\bm{y})=0$ for $j=1,\dots,J^\text{UL}$ restrict the feasible region for the problem in Eq.~\ref{eq:general bi-level optimization problem}. 
However, unlike a conventional single-level optimization problem, the feasibility of the problem is contingent on the existence of a solution to the LL sub-problem which satisfies its own set of feasibility and optimality conditions.
For each fixed $\bm{x}\in \mathscr{X}$, the LL sub-problem determines the optimal reaction $\bm{y}^\star \in \mathscr{Y}(\bm{x})$ which minimizes $\mathscr{F}^{\mathrm{LL}}$ subject to constraint set comprising the inequality constraints $\mathscr{G}^{\text{LL}}_{i'}(\bm{x},\bm{y}) \leq 0$ for $i'=1,\dots,I^\text{LL}$ and equality constraints $\mathscr{H}^{\text{LL}}_{j'}(\bm{x},\bm{y})=0$ for $j'=1,\dots,J^\text{LL}$.
This nested dependency creates a leader-follower structure where the UL must anticipate the optimal response from the LL, while the feasible set for the LL in turn depends parametrically on the decision variables from the UL.
Consequently, bi-level optimization problems are inherently nonconvex and non-smooth, even when both levels are convex individually.
These properties make bi-level optimization problems significantly more challenging than classical single-level optimization problems, motivating specialized reformulations such as value functions, penalty methods, or single-level reformulation approaches.
\asnotes{General bi-level optimization problem}

The missed thrust design problem naturally admits a bi-level representation in which two coupled optimal control problems are solved in a hierarchical leader-follower structure.
\begin{equation}
\label{eq:mte bi-level optimal control problem}
\begin{aligned}
    \min_{\bm{u^\dagger} \in \mathcal{U}^\dagger \subseteq \mathcal{U}, \bm{u^\omega} \in \mathcal{U}^\omega \subseteq \mathcal{U}} \quad & \{ J^\dagger(\bm{u^\dagger}) \equiv \phi(\bm{\xi^\dagger}_1) + \int_0^1 \mathcal{L}(s, \bm{\xi^\dagger}_s, \bm{u^\dagger}_s) ds \} \\
    \text{s.t.} \quad 
        & \bm{\xi}_t^\dagger = \bm{\xi^\dagger}_0 + \int_0^t \bm{f}(s, \bm{\xi^\dagger}_s) ds + \int_0^t \bm{g}(s, \bm{\xi^\dagger}_s, \bm{u^\dagger}_s) ds, \quad \forall t \in [0, 1], \\
        & \bm{\xi^\dagger}_0 \in \Xi_0, \quad \bm{\xi^\dagger}_1 \in \Xi_1, \\
        & \bm{u^\omega} \equiv \argminn_{\bm{\tilde{u}}^\omega \in \mathcal{U}^\omega} 
        \left\{ 
            \begin{aligned}
                J^\omega(\bm{u^\dagger}, \bm{u^\omega}) & \equiv \phi(\bm{\xi^\dagger}_1, \bm{\xi^\omega}_1) + \int_0^1 \mathcal{L}(s, \bm{\xi^\dagger}_1, \bm{\xi^\omega}_1, \bm{u^\dagger}_s, \bm{u^\omega}_s) ds \\ 
                \text{s.t.} \quad
                & \bm{\xi}_t^\omega = \bm{\xi^\dagger}_0 + \int_0^t \bm{f}(s, \bm{\xi^\omega}_s) ds \\ 
                & \ \quad \qquad + \int_0^{\tau_1(\omega) \wedge t} \bm{g}(s, \bm{\xi^\omega}_s, \bm{u^\dagger}_s) ds \\
                & \ \quad \qquad + \sum_{i \in \mathbb{Z}_+} \int_{\tau_{2i}(\omega) \wedge t}^{\tau_{2i+1}(\omega) \wedge t} \bm{g}(s, \bm{\xi^\omega}_s, \bm{\tilde{u}}_s^\omega) ds, \quad \forall t \in [0, 1]\\
                & \bm{\xi^\omega}_0 \in \Xi_0, \quad \bm{\xi^\omega}_1 \in \Xi_1
            \end{aligned}
        \right\} \forall \omega \in \Omega.
\end{aligned}
\end{equation}
Here, we use the same notation as was done in \S~\ref{sec:problem formulation}.
In this setup, the UL problem, or the \emph{leader} problem, determines the nominal solution $(\bm{\xi}^\dagger,\bm{u}^\dagger)$ under full control authority, optimizing mission level objectives such as propellant consumption, flight time, or a weighted combination thereof.
The LL sub-problem, or the \emph{follower} sub-problem, represents the optimal recovery solution $(\bm{\xi}^\omega,\bm{u}^\omega)$ corresponding to a missed thrust realization characterized by the random variable $\omega\in\Omega$.
Each realization follows the nominal trajectory until the onset of the MTE, propagates under natural dynamics during the missed thrust interval, and subsequently applies an optimal recovery control to re-satisfy the terminal constraints.
Accordingly, the UL variables must be chosen so that, for every missed thrust scenario under consideration, the corresponding LL sub-problem is feasible and satisfies its own optimality conditions, thereby embedding robustness in the design by construction.
Posing the missed thrust design problem as a BOC problem in Eq.~\eqref{eq:mte bi-level optimal control problem} enforces mutual feasibility i.e., any improvement in nominal performance (e.g., reduction in the UL objective function) must not come at the expense of infeasibility under the worst case MTE scenarios.
\asnotes{Writing the missed thrust design problem as a bi-level optimal control problem}

\subsection{Single‑Level Reformulation: LL Optimality Conditions} \label{subsection:methodology:single-level reformulation ll optimality conditions}

The principal difficulty in solving bi-level optimization problems lies in the implicit dependence of the UL solutions on the LL solutions.
Because the feasible set of the UL problem is characterized by the optimal solution to the LL sub-problem, it renders bi-level programs inherently nonconvex and nonsmooth, even when both levels are individually convex and differentiable as was briefly stated before.
In fact, the map from the UL variables to the LL solution is only piecewise smooth, and its derivatives depend on the subset of the LL inequality constraints which are active at the optimal solution.
Whenever this active set changes, i.e., when an inequality constraint switches between active and inactive status, the gradient of the LL solution with respect to the UL variables may experience a discontinuity.
Consequently, the overall mapping is only directionally differentiable, and may fail to be continuously differentiable, thereby complicating sensitivity analysis and numerical optimization.
To address these difficulties, several methodological frameworks have been developed in the literature, ranging from enumerative and penalty approaches to descent and trust-region methods, as well as single-level reformulations, each offering distinct advantages and disadvantages.
\asnotes{Motivating the need for single-level reformulations}

\emph{Enumerative} approaches resolve the hierarchical coupling in bi-level programs by enumerating all possible active set configurations in the LL sub-problem~\cite{papavassilopoulos_algorithms_1982,bialas_two-level_1982,candler_linear_1982,dempe_simple_1987,tuy_global_1993}, and solving the resulting optimization problem for each candidate active set using an iterative scheme, typically a branch-and-bound or branch-and-cut framework~\cite{bard_branch_1990,moore_mixed_1990,liu_solving_1995,shi_extended_2006}.
For small-scale problems, enumerative approaches can converge to the global optimum in theory, however, since the number of possible active set configurations grows combinatorially with the dimensionality of the problem, this is often prohibitive due to exponential computational complexity and runtimes for large-scale problems e.g., continuous time optimal control problems.
For enumerative methods to be computationally tractable, the LL sub-problem must exhibit a finite set of possible active set configurations at optimality, which, in practice, requires the LL sub-problem to be a convex program with a polyhedral feasible region, thereby ensuring a finite number of active set configurations to consider~\cite{vicente_discrete_1996,xu_exact_2014,liu_enhanced_2021}. 
\asnotes{Describing the enumerative approach}

\emph{Penalty} approaches offer an alternative which bypasses the combinatorial burden of enumeration by reformulating the bi-level problem as a sequence of single-level problems.
The key idea is to augment the original objective function with additive penalty terms (with a scalar weight) which quantify violations in the constraints such that these terms go to zero when constraints are satisfied but remain strictly positive otherwise.
By systematically increasing the penalty weights, the solution sequence is driven toward feasibility and, ideally, to satisfaction of the original bi-level structure.
In its original formulation, Aiyoshi and Shimizu proposed replacing the LL sub-problem with a surrogate unconstrained optimization problem, in which the objective function is augmented with additive penalty terms corresponding to residual violations of the constraints~\cite{aiyoshi_hierarchical_1981,aiyoshi_solution_1984}.
However, this approach does not help resolve the hierarchical structure of the bi-level program, as the LL sub-problem must still be solved at each iteration to optimality.
While conceptually simple and provably convergent under certain conditions, this method remains computationally intractable for general nonlinear bi-level programs.
To help mitigate this issue, Ishizuka and Aiyoshi~\cite{ishizuka_double_1992} proposed a \emph{double penalty} approach in which both the UL and LL objective functions are modified by additive penalty terms.
In their formulation, the LL sub-problem is replaced by its stationarity conditions, which are penalized and embedded directly within the UL objective, thereby transforming the bi-level structure into a single-level \emph{Mathematical Program with Equilibrium Constraints} (MPEC) and eliminating the need for repeated LL optimization.
Building on this framework, later methods explicitly encode the full \emph{Karush-Kuhn-Tucker} (KKT) system of the LL sub-problem into the UL-level formulation.
Notably, Bi, Calamai, and Conn~\cite{bi_exact_1989,bi_exact_1991} developed exact penalty formulations that incorporate regularization techniques to promote satisfaction of the LL optimality conditions, particularly the complementarity and stationarity components.
\asnotes{Describing the penalty approach}

An alternative reformulation strategy involves introducing the LL optimal value function as an explicit constraint in the UL problem.
This approach, known as the \emph{value-function} method, defines the LL value function as:
\begin{equation}
    \Phi^{\text{LL}}(\bm{x}) 
    \equiv 
    \min_{\bm{\tilde{y}} \in \mathscr{Y}(\bm{x})}\bigl\{\mathscr{F}^{\text{LL}}(\bm{x},\bm{\tilde{y}}) \ \mid \ \mathscr{G}^{\text{LL}}_{i'}(\bm{x},\bm{\tilde{y}}) \le 0, i'=1,\dots,I^\text{LL}; \quad \mathscr{H}^{\text{LL}}_{j'}(\bm{x},\bm{\tilde{y}})=0, j'=1,\dots,J^\text{LL}\bigr\}
\end{equation}
where $\Phi^{\text{LL}}(\bm{x})$ denotes the minimal LL objective value attainable for a fixed UL decision $\bm{x}$ under the LL constraints.
Using this optimal value function, the LL sub-problem can then be replaced by the constraint 
\begin{equation}
    \mathscr{F}^{\text{LL}} (\bm{x}, \bm{y}) \leq \Phi(\bm{x})
\end{equation}
together with the LL feasibility conditions:
\begin{equation}
    \mathscr{G}^{\text{LL}}_{i'}(\bm{x},\bm{\tilde{y}}) \le 0, i'=1,\dots,I^\text{LL} \quad \text{and} \quad \mathscr{H}^{\text{LL}}_{j'}(\bm{x},\bm{\tilde{y}}) = 0, j'=1,\dots,J^\text{LL}
\end{equation}
ensuring that $\bm{y}$ is an optimal LL response for a given $\bm{x}$~\cite{dempe_2020_bilevel,fischer_semismooth_2022,jolaoso_fresh_2023,outrata_numerical_1990,ye_difference_2022,ye_new_2010}.
While this yields a valid single-level reformulation, the value function $\Phi^{\text{LL}}(\bm{x})$ is not available in closed form in general, making this approach inapplicable for more complex problems.
Evaluating $\Phi^{\text{LL}}(\bm{x})$ typically requires solving the LL sub-problem at each query point $\bm{x}$, thereby embedding a nested optimization problem into every constraint evaluation.
Moreover, the feasible region defined by the inequality $\mathscr{F}^{\text{LL}}(\bm{x},\bm{y}) \le \Phi^{\text{LL}}(\bm{x})$ and the LL constraints $\mathscr{G}^{\text{LL}}_{i'}(\bm{x},\bm{\tilde{y}}) \le 0, i'=1,\dots,I^\text{LL} \quad \text{and} \quad \mathscr{H}^{\text{LL}}_{j'}(\bm{x},\bm{\tilde{y}}) = 0, j'=1,\dots,J^\text{LL}$ is generally nonsmooth, complicating both analysis and solution via standard \emph{nonlinear programming} (NLP) techniques.
Consequently, while conceptually appealing, value-function reformulations are best suited to settings where $\Phi^{\text{LL}}(\bm{x})$ admits tractable approximations or analytical expressions which is rarely plausible in practice.
\asnotes{Describing the value function approach}

\emph{Descent} methods aim to improve the UL objective by computing search directions which reduce the objective value while preserving feasibility with respect to LL optimality.
Originally proposed by Kolstad and Lasdon~\cite{kolstad_derivative_1990} and further developed by Savard and Gauvin~\cite{savard_steepest_1994}, these methods rely on defining the LL solution $\bm{y}^\star(\bm{x})$ as an implicit function of the UL variables $\bm{x}$, which allows one to differentiate the LL optimality conditions with respect to $\bm{x}$.
This yields sensitivity information that can be used to construct a feasible descent direction in the UL space, ensuring first order improvements in the objective while satisfying the LL response at each iteration.
Descent methods offer a principled mechanism for leveraging the coupling between levels, and the descent direction search problem often reduces to a quadratic program derived from IFT.
However, the applicability of these methods hinges on strong regularity conditions: the LL sub-problem must be continuously differentiable, possess a locally unique optimizer, and satisfy constraint qualifications (e.g., LICQ) to guarantee differentiability of the implicit solution map.
In cases where the LL is nonconvex, degenerate, or exhibits multiple optima, the LL solution map $\bm{y}^\star(\bm{x})$ may become nondifferentiable or even discontinuous, invalidating gradient approaches and undermining convergence guarantees.
\asnotes{Describing the descent methods}

\emph{Trust-region} methods decompose the hierarchical structure by iteratively constructing surrogate models which locally approximate the original problem within a neighborhood of the current iterate. 
With this approach, the UL objective is linearized around the current iterate and the LL sub-problem is then replaced by a tractable convex surrogate e.g., a quadratic program or a linear variational inequality over a polyhedral set.
When the model accurately captures local behavior, the trust region expands but otherwise contracts to maintain local validity.
Early applications have considered unconstrained UL problems with strongly convex LL sub-problems with to linear constraints~\cite{liu_trust_1998}. 
The LL sub-problem was replaced by its optimal response map, and each iteration then built a surrogate model using a second order approximation of the LL objective and a first order approximation of the UL objective. 
Marcotte et al. extended this strategy to nonlinear bi-level problems by approximating the LL sub-problem with a strongly monotone linear variational inequality~\cite{marcotte_trust_2001}. 
Colson et al. further generalized the approach by using a quadratic approximation of the LL objective with linearized LL constraints~\cite{colson_trust-region_2005}, and encoding the optimality conditions to reduce the problem to a single-level mixed integer program. 

A particularly powerful reformulation opportunity arises when the LL problem is convex and satisfies the \emph{Mangasarian-Fromovitz Constraint Qualification} (MFCQ), which ensures the existence of Lagrange multipliers for the LL problem~\cite{mangasarian_fritz_1967}. 
Under these conditions, the LL problem can be replaced by its first order KKT conditions, yielding a \emph{Mathematical Program with Complementarity Constraints} (MPCC). 
While this transformation results in an explicit single-level formulation, it introduces complementarity constraints of the form $\lambda_{i'}, \mathscr{G}^{\text{LL}}_{i'}(\bm{x}, \tilde{\bm{y}}) = 0 \ \forall \ i'=1,\dots,I'_g$, where $\lambda_{i'} \ge 0$ are the dual variables and $\mathscr{G}^{\text{LL}}_{i'}(\bm{x}, \tilde{\bm{y}}) \le 0$ are the LL inequality constraints. 
These constraints define a nonconvex feasible region composed of the union of finitely many manifolds, which leads to the violation of MFCQ at all feasible points in the MPCC. 
Specifically, at any feasible point where $\lambda_{i'}=0$ and $\mathscr{G}^{\text{LL}}_{i'}(\bm{x}, \tilde{\bm{y}}) = 0$ simultaneously, the gradients of the active constraints become linearly dependent, thereby failing to span the tangent space required by standard optimization routines.
Classical NLP theory, including convergence guarantees of standard solvers, relies on regularity conditions such as MFCQ. 
To recover meaningful stationarity results, additional conditions become necessary. 
If the LL sub-problem satisfies the \emph{Linear Independence Constraint Qualification} (LICQ), i.e., the gradients of all active LL constraints are linearly independent, then it implies that the LL KKT multipliers are unique.
This uniqueness prevents ambiguity in the embedding of the KKT system and ensures that the MPCC accurately captures the local behavior of the original bi-level problem. 
Furthermore, due to the stricter nature of the LICQ conditions, if LICQ holds, it automatically implies that MFCQ holds as well.
Hence, this reformulation technique remains especially attractive for optimal control applications where LICQ is guaranteed by the structure of the problem. 
\asnotes{Describing the kkt method}

To formalize this connection, consider the bi-level optimization problem in Eq.~\eqref{eq:general bi-level optimization problem}, where, once again, $\bm{x}\in X\subseteq\mathbb{R}^{n_x}$ and $\bm{y}\in Y\subseteq\mathbb{R}^{n_y}$ represent the UL and LL decision variables respectively.  
Recall that the LL sub-problem minimizes $\mathscr{F}^{\mathrm{LL}}(\bm{x},\tilde{\bm{y}})$ over $\tilde{\bm{y}}\in Y(\bm{x})$ subject to inequality constraints $\mathscr{G}^{\mathrm{LL}}_{i'}(\bm{x},\tilde{\bm{y}})\le 0, i'=1,\dots,I^\text{LL}$ and equality constraints $\mathscr{H}^{\mathrm{LL}}_{j'}(\bm{x},\tilde{\bm{y}})=0, j'=1,\dots,J^\text{LL}$.  
Let $\bm{\lambda}\in\mathbb{R}^{I'_g}_{+}$ and $\bm{\mu}\in\mathbb{R}^{J'_h}$ denote the associated LL multipliers, and define the LL Hamiltonian as:
\begin{equation}
\label{eq:general LL hamiltonian:continuous}
    \mathcal{H}_{\mathrm{LL}}(\bm{x},\tilde{\bm{y}},\bm{\lambda},\bm{\mu})
    \equiv 
    \mathscr{F}^{\mathrm{LL}}(\bm{x},\tilde{\bm{y}})
    + \sum_{i'=1}^{I^\text{LL}}\lambda_{i'}\mathscr{G}^{\mathrm{LL}}_{i'}(\bm{x},\tilde{\bm{y}})
    + \sum_{j'=1}^{J^\text{LL}}\mu_{j'}\mathscr{H}^{\mathrm{LL}}_{j'}(\bm{x},\tilde{\bm{y}}).
\end{equation}
for any given UL decision variable $\bm{x}$.
Under standard regularity assumptions, optimality of the LL sub-problem requires that the decision variable set $(\bm{y},\bm{\lambda},\bm{\mu})$ satisfy \emph{stationarity}, \emph{primal} and \emph{dual feasibility}, and \emph{complementarity} conditions.
Substituting these conditions into the UL constraints produces the single-level MPCC formulation below:
\begin{equation} \label{eq:general mpcc}
    \begin{aligned}
        \min_{\bm{x} \in X,\bm{y} \in Y, \bm{\lambda}, \bm{\mu}} 
        & \quad \{\mathscr{F}^{\text{UL}}(\bm{x},\bm{y})\} \\
        \text{s.t.}\quad & \mathscr{G}^{\text{UL}}_i(\bm{x},\bm{y})\le 0,\ i=1,\dots,I_G, \\
        & \mathscr{H}^{\text{UL}}_j(\bm{x},\bm{y})=0,\ j=1,\dots,J_H, \\
        & \nabla_y \mathcal{H}_{\mathrm{LL}}(\bm{x},\bm{y},\bm{\lambda},\bm{\mu}) = \nabla_y \left( \mathscr{F}^{\mathrm{LL}}(\bm{x},\tilde{\bm{y}}) + \sum_{i'=1}^{I^\text{LL}}\lambda_{i'}\mathscr{G}^{\mathrm{LL}}_{i'}(\bm{x},\tilde{\bm{y}}) + \sum_{j'=1}^{J^\text{LL}}\mu_{j'}\mathscr{H}^{\mathrm{LL}}_{j'}(\bm{x},\tilde{\bm{y}}) \right) = \bm{0}, \\
        & \mathscr{G}^{\text{LL}}_{i'}(\bm{x},\bm{y})\le 0,\ i'=1,\dots,I'_g, \\
        & \mathscr{H}^{\text{LL}}_{j'}(\bm{x},\bm{y})=0,\ j'=1,\dots,J'_h, \\
        & \lambda_i \ge 0,\ i'=1,\dots,I'_g, \\
        & \lambda_{i'} \mathscr{G}^{\text{LL}}_{i'}(\bm{x},\bm{y})=0,\ i'=1,\dots,I'_g.
    \end{aligned}
\end{equation}
\asnotes{Describing the single-level reformulation for a general bocp}

The missed thrust design problem admits a direct analog of the above construction.  
To enable this, however, we model the LL sub-problem as a finite horizon \emph{linear quadratic regulator} (LQR) with a strictly convex quadratic objective and linear time-varying dynamics.
Under this structure, the optimal solution and corresponding costates become unique, and LICQ is satisfied automatically due to the linearity of the constraints and the strict convexity of the objective function.
As a result, the continuous-time KKT system, which includes the stationarity, primal and dual feasibility conditions, constitutes both necessary and sufficient conditions for optimality of the LL sub-problem.
Most importantly, because the LL sub-problem is unconstrained, the embedded KKT conditions do not involve complementarity conditions, and the resulting single-level optimal control formulation remains continuously differentiable with unique primal and dual solution pair.
\asnotes{Assumptions we need to make about the structure of the problem so that we can use a single-level reformulation for the mtd problem}

\ifthenelse{\boolean{includefigures}}
{
    \begin{figure}[!htb]
        \centering
        \includegraphics[keepaspectratio, width=0.5\linewidth]{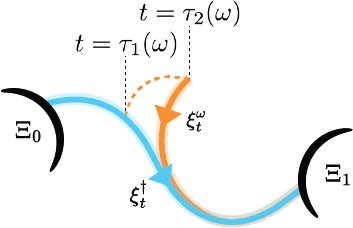}
        \caption{
        Schematic of the restricted robust problem in the bi-level setup showing \textcolor{SkyBlue}{reference} and \textcolor{Orange}{realization} trajectories
        }
        \label{fig:problem_setup_trajectory_regulator}
    \end{figure}
}
{
}

To illustrate these properties, we outline the structural features of the missed thrust design problem in its bi-level form below.
The UL problem seeks to determine the reference control input $\bm{u}^\dagger$ which minimizes
\begin{equation}
\label{eq:mte UL cost}
    J^\dagger(\bm{u}^\dagger)
    =\{\phi(\bm{\xi}_1^\dagger)
    +\int_0^1
    \mathcal{L}\left(t,\bm{\xi}_t^\dagger,\bm{u}_t^\dagger\right)dt\},
\end{equation}
subject to 
\begin{equation} \label{eq:mte UL constraint}
    \dot{\bm{\xi}}_t^\dagger
    =\bm{f}(t,\bm{\xi}_t^\dagger)
    +\bm{g}(t,\bm{\xi}_t^\dagger,\bm{u}_t^\dagger), \quad \bm{\xi}_0^\dagger\in\Xi_0, \quad \bm{\xi}_1^\dagger\in\Xi_1.  
\end{equation}
For each realization $\omega\in\Omega$, the LL sub-problem seeks to determine the realization control input $\bm{u}^\omega$ which minimizes
\begin{equation}
\label{eq:mte LL cost}
    J^\omega(\bm{u}^\dagger,\bm{u}^\omega)
    =\{\tilde{\bm{\xi}}_1^T Q_f \tilde{\bm{\xi}}_1
    +
    \int_0^1 
    \left( \tilde{\bm{\xi}}_t^T Q \tilde{\bm{\xi}}_t + \tilde{\bm{u}}_t^T R \tilde{\bm{u}}_t \right) dt\},
\end{equation}
subject to
\begin{equation}
    \dot{\tilde{\bm{\xi}}}_t^\omega
    =
    A_t \tilde{\bm{\xi}} + B_t \tilde{\bm{u}}, \quad \bm{\xi}_0^\omega=\bm{\xi}_0^\dagger, \quad \bm{\xi}_1^\omega\in\Xi_1.
\end{equation} 
where, $A_t \equiv \nabla_{\bm{\xi}} \tilde{\bm{f}}(t,\bm{\xi}_t^\omega, \bm{u}_t^\omega)$, $B_t \equiv \nabla_{\bm{u}} \tilde{\bm{f}}(t,\bm{\xi}_t^\omega, \bm{u}_t^\omega)$, and $\tilde{\bm{f}}(t,\bm{\xi}_t^\omega, \bm{u}_t^\omega) \equiv \bm{f}(t,\bm{\xi}_t^\omega) + \bm{g}(t,\bm{\xi}_t^\omega, \bm{u}_t^\omega)$.
We define the deviation between the reference and the realization states and control as $\tilde{\bm{\xi}}_t \equiv \bm{\xi}_t^\omega - \bm{\xi}_t^\dagger$ and $\tilde{\bm{u}}_t \equiv \bm{u}_t^\omega - \bm{u}_t^\dagger$ respectively.
With the quadratic objective function for the LL sub-problem, the corresponding Hamiltonian can be written as:
\begin{equation}
\label{eq:mte LL hamiltonian:continuous}
    \mathcal{H}^\omega
    (t,\bm{\xi}_t^\omega,\bm{u}_t^\omega,\bm{\lambda}_t^\omega)
    \equiv
    \tilde{\bm{\xi}}_t^T Q \tilde{\bm{\xi}}_t + \tilde{\bm{u}}_t^T R \tilde{\bm{u}}_t
    +\bm{\mu}_t^{\omega\top} \left( A_t \tilde{\bm{\xi}} + B_t \tilde{\bm{u}} - \dot{\tilde{\bm{\xi}}}_t^\omega \right).
\end{equation}
where $Q,Q_f\succeq 0$ and $R\succ 0$ describe the weights describing the relative penalty on the state and control components.  
Here, $\bm{\mu}_t^\omega$ describes the LL costates corresponding to the equality constraints describing the linear dynamics.  
Subsequently, the first order necessary conditions for LL optimality are given by:
\begin{equation}\label{eq:mte kkt:continuous}
\begin{aligned}
    R \tilde{\bm{u}}_t^\omega+B_t^\top\bm{\mu}_t^\omega &= \bm{0},\\
    \dot{\tilde{\bm{\xi}}}_t^\omega &= A_t \tilde{\bm{\xi}} + B_t \tilde{\bm{u}},\\
    -\dot{\bm{\mu}}_t^\omega &= A_t^\top\bm{\mu}_t^\omega+Q \tilde{\bm{\xi}_t},\\
    \bm{\mu}_1^\omega &= Q_f \tilde{\bm{\xi}_1}.
\end{aligned}
\end{equation}
The stationarity condition implies the affine feedback law given by:
\begin{equation}\label{eq:mte feedback_law}
\bm{u}_t^\omega =
\begin{cases}
    \bm{0}, & \forall \ t \in \mathcal{O}^\omega \equiv \bigcup_{i\in\mathbb{Z}_+} [\tau_{2i+1}(\omega), \tau_{2i+2}(\omega)), \\
    -R^{-1} B_t^\top \bm{\mu}_t^\omega, & \forall \ t \in \mathcal{A}^\omega \equiv [0,1] \setminus \mathcal{O}^\omega.
\end{cases}
\end{equation}
where for each $\omega \in \Omega$, the thrust outage periods $\mathcal{O}^\omega$ and availability periods $\mathcal{A}^\omega$ dictate when thrust is suppressed or allowed. 
Substituting the feedback law from Eq.~\eqref{eq:mte feedback_law} into the first order necessary conditions defined by Eq.~\eqref{eq:mte kkt:continuous} produces a two-point boundary value problem in $(\bm{\xi}^\omega,\bm{\mu}^\omega)$ coupled to the nominal trajectory $(\bm{\xi}^\dagger,\bm{u}^\dagger)$.  
The embedded KKT system defines a smooth single-level optimal-control problem that is locally equivalent to the original bi-level formulation, and the absence of complementarity constraints ensures that the resulting single-level formulation remains continuously differentiable, allowing accurate sensitivity analysis and robust convergence of off-the-shelf NLP solvers.

Assembling the necessary necessary elements above, we obtain the following BOC representation of the missed thrust design problem:
\begin{equation} \label{eq:mte mpcc}
    \begin{aligned}
        \min_{\bm{u^\dagger} \in \mathcal{U}^\dagger \subseteq \mathcal{U}, \bm{u^\omega} \in \mathcal{U}^\omega \subseteq \mathcal{U}}
        \quad & \{J^\dagger(\bm{u^\dagger}) \equiv \phi(\bm{\xi^\dagger}_1) + \int_0^1 \mathcal{L}\bigl(t,\bm{\xi}_t^\dagger,\bm{u^\dagger}_t\bigr) dt \} \\
        \text{s.t.}\quad
        & \dot{\bm{\xi}}_t^\dagger = \bm{f}\bigl(t,\bm{\xi}_t^\dagger\bigr) + \bm{g}\bigl(t,\bm{\xi}_t^\dagger,\bm{u^\dagger}_t\bigr); \\
        & \mathcal{C}^\dagger\bigl(\bm{\xi^\dagger}_t,\bm{u^\dagger}_t\bigr)\le \bm{0} \qquad \forall t \in [0,1], \\
        & \bm{\xi^\dagger}_0 \in \Xi_0, \quad \bm{\xi^\dagger}_1 \in \Xi_1, \\
        & \left\{ 
            \begin{aligned}
                \qquad \bm{u^\omega}_t &= \bm{0} \qquad \forall t\in \mathcal{O}^\omega, \\ 
                \qquad R\bm{u^\omega}_t + {B_t}^\top \bm{\mu}_t^\omega &= \bm{0} \qquad \forall t\in \mathcal{A}^\omega, \\ 
                \bm{\xi^\omega}_0 &\in \Xi_0, \\
                \qquad \bm{\xi^\omega}_t &= \bm{\xi^\dagger}_t \qquad \forall t\in[0, \tau_1), \\
                \qquad \dot{\tilde{\bm{\xi}}}_t^\omega &= A_t \tilde{\bm{\xi}} + B_t \tilde{\bm{u}} \qquad \forall  t\in[\tau_1, 1), \\
                \bm{\xi^\omega}_1 &\in \Xi_1, \\
                \qquad -\dot{\bm{\mu}}_t^\omega &= A_t^\top\bm{\mu}_t^\omega+Q \tilde{\bm{\xi}_t} \qquad \forall  t\in\mathcal{A}^\omega, \\
                \bm{\mu}_1^\omega &= Q_f \tilde{\bm{\xi}_1}, \\
                \mathcal{C}^\omega \bigl(\bm{\xi^\omega}_t,\bm{u^\omega}_t\bigr) &\le \bm{0},
            \end{aligned}
        \right\} \forall \omega \in \Omega.
    \end{aligned}
\end{equation}
where $\mathcal{C}^\dagger$ and $\mathcal{C}^\omega$ describe any additional constraints on the reference and realization solutions (see \S~\ref{subsection:methodology:transcription into a nonlinear program}).
\asnotes{Describing the single-level reformulation for the mtd problem}

\subsection{Transcription Into A Nonlinear Program} \label{subsection:methodology:transcription into a nonlinear program}

To numerically solve the bi-level optimal control problem reformulated into a single-level form (see \S~\ref{subsection:methodology:single-level reformulation ll optimality conditions}), we transcribe the infinite-dimensional continuous time formulation into a finite-dimensional NLP.
We adopt a finite burn LT propulsion model, wherein the trajectory is partitioned into multiple burn segments, and within each segment the applied thrust is constant in both magnitude and direction. 
This parametrization provides a discrete representation of the control profile and state evolution over each segment, thereby enabling an efficient direct transcription of the bi-level formulation into a finite-dimensional NLP.
\asnotes{Introducing the transcription method}

Let $\bm{x}^\dagger \in \mathbb{R}^{\overline{N}^\dagger}$ and $\bm{x}^\omega \in \mathbb{R}^{\overline{N}^\omega}$ denote the decision vectors for the nominal and realization trajectories respectively. 
Transcribing the single-level formulation of the missed-thrust design problem in Eq.~\eqref{eq:mte mpcc} yields an NLP of the following general form:
\begin{equation} \label{eq:nonlinear_program}
    \begin{aligned}
        \min_{\bm{x}^\dagger \in \mathbb{R}^{\overline{N}^\dagger}, \bm{x}^\omega \in \mathbb{R}^{\overline{N}^\omega}} 
        & \quad \{ \mathscr{F}^{\text{UL}}(\bm{x},\bm{y}) \} \\
        \text{s.t.} \quad & c^\dagger_e(\bm{x}^\dagger) = 0 \quad \forall \ e \in \mathcal{E}, \\
        & c^\omega_e(\bm{x}^\omega) = 0 \quad \forall \ e \in \mathcal{E}, \\
        & c^\dagger_i(\bm{x}^\dagger) \leq 0 \quad \forall \ i \in \mathcal{I}, \\
        & c^\omega_i(\bm{x}^\omega) \leq 0 \quad \forall \ i \in \mathcal{I}, \\
        & \underline{\bm{x}}^\dagger \le \bm{x}^\dagger \le \overline{\bm{x}}^\dagger, \\
        & \underline{\bm{x}}^\omega \le \bm{x}^\omega \le \overline{\bm{x}}^\omega.
    \end{aligned}
\end{equation}
where $\mathcal{E}$ and $\mathcal{I}$ index the equality and inequality constraint sets respectively, and $(\underline{\bm{x}}^\dagger, \overline{\bm{x}}^\dagger)$ and $(\underline{\bm{x}}^\omega, \overline{\bm{x}}^\omega)$ denote appropriate box constraints imposed on the reference and the realization decision variables. 
\asnotes{General NLP setup}

The reference decision vector $\bm{x}^\dagger$ has the form:
\begin{equation} \label{equation: decision vector (reference)}
    \bm{x}^\dagger \equiv (T^\dagger, \bm{u}^\dagger_1, \bm{u}^\dagger_2,..., \bm{u}^\dagger_{N^\dagger}),
\end{equation}
where $T^\dagger$ is the flight time, and $N^\dagger$ is the number of thrust segments each of equal duration $T^\dagger_s / N^\dagger$. 
Each thrust vector $\bm{u}_p^\dagger \in \mathbb{R}^3$ represents the applied thrust during segment $p$ (e.g., a throttle and two directional parameters in a spherical representation).
The realization trajectory $\bm{x}^\omega$ follows a similar parameterization, including the dynamics and constraints consistent with the missed thrust structure:
\begin{equation} \label{equation: decision vector (realization)}
    \bm{x}^\omega \equiv (T^\omega, \bm{u}^\omega_1, \bm{u}^\omega_2,..., \bm{u}^\omega_{N^\omega}).
\end{equation}
The number of segments for the realization $N^\omega$ depend on the location where the MTE occurs, and is given by an \emph{adaptive segmentation strategy} (see Sinha and Beeson \cite{amlans_initial_2025j} for more details).
The resulting NLP is sparse and continuously differentiable, allowing efficient solution using large-scale, gradient-based solvers.
\asnotes{Describing the decision variables}
\section{Theoretical Results: Maximum Allowable Missed Thrust Duration}
\label{sec:theoretical bounds on maximum missed thrust duration}

In the single-level reformulation described earlier in \S~\ref{subsection:methodology:single-level reformulation ll optimality conditions}, each LL recovery problem is modeled as a finite-horizon LQR problem, obtained by linearizing the nonlinear dynamics about the nominal UL trajectory and penalizing the deviation between the reference and the realization solutions through a quadratic cost. 
This structural assumption guarantees uniqueness and smoothness of the LL solution and enables the corresponding KKT system to be embedded within the UL constraints. 
However, its validity relies on the deviation between the realization and the reference trajectory remaining within a region where the linearization is a faithful approximation of the true nonlinear dynamics. 
In particular, if the missed thrust interval is too long, the resulting state excursion can invalidate the linear approximation and, consequently, the embedded LL structure.
\asnotes{Motivating the theoretical result}

In this section, we derive an explicit upper bound on the missed thrust duration for which the nonlinear effects remain within an acceptable tolerance with respect to the linear dynamics, thereby providing formal guarantees under which this approach is justifiable. 
In \S\ref{subsec:theoretical bounds on maximum missed thrust duration:mathematical preliminaries}, we first describe the relevant mathematical preliminaries pertinent to the forthcoming theoretical results, and state the regularity assumptions necessary for the corresponding analysis. 
In \S\ref{subsec:theoretical bounds on maximum missed thrust duration:main theorem}, we first present the main theoretical result: an analytical expressions for the maximum allowable missed thrust duration (which depends solely on precomputable quantities along the nominal trajectory) for which the linear dynamics dominate the nonlinear dynamics, and then, in \S\ref{subsec:theoretical bounds on maximum missed thrust duration:supporting lemmas}, we provide three supporting lemmas that establish the key intermediate bounds underpinning the main theorem.
\asnotes{Describing the organization of this section}

\subsection{Mathematical Preliminaries} \label{subsec:theoretical bounds on maximum missed thrust duration:mathematical preliminaries}

\subsubsection{Additional Notation} \label{subsubsec:theoretical bounds on maximum missed thrust duration:mathematical preliminaries:notation}

This subsection consolidates the notation employed in the subsequent analysis. 
While several of the notation listed here were introduced earlier in \S~\ref{sec:problem formulation} and \S~\ref{sec:methodology}, the theoretical results rely on a specific collection of variables and constants, particularly the reference and realization trajectories as well as their corresponding deviation, which are reiterated below. 
These define the error dynamics induced by an MTE and constitute the framework within which the regularity conditions are imposed.
For clarity and to streamline the statement and proof of the main theorem, we collect all relevant notation here in Table~\ref{tab:notation}.

\begin{table}[!htb]
    \centering
    \caption{Symbols appearing in Theorem \ref{thm:maximum_missed_thrust_duration}.}
    \renewcommand{\arraystretch}{1.15}
    \begin{tabular}{@{}p{0.20\linewidth} p{0.74\linewidth}@{}}
        \toprule[1pt]
        Symbol & Meaning / definition \\ \midrule
        $\bm{\xi}$              & State of the nonlinear system (Eq.~\eqref{eq:dynamics}). \\
        $\bm{u}$                & Control of the nonlinear system (Eq.~\eqref{eq:dynamics}). \\
        $\bm{\xi^{\dagger}}$    & Nominal reference state (Eq.~\eqref{eq:reference_dynamics}). \\
        $\bm{u^{\dagger}}$      & Nominal reference control input driving $\bm{\xi^{\dagger}}$ (Eq.~\eqref{eq:reference_dynamics}). \\
        $\bm{\xi^{\omega}}$     & Realization state (Eq.~\eqref{eq:realization_dynamics}). \\
        $\bm{u^{\omega}}$       & Realization control input driving $\bm{\xi^{\omega}}$ (Eq.~\eqref{eq:realization_dynamics}). \\
        $\bm{\tilde{\xi}}$      & Error in state (Eq.~\eqref{eq:error}). \\
        $\tilde{u}$             & Error in control (Eq.~\eqref{eq:error}). \\
        $A_t$                   & Jacobian $\nabla_{\xi}\tilde{f}$ along $(\bm{\xi^{\dagger}}_t, \bm{u^{\dagger}}_t)$ (Eq.~\eqref{eq:jacobians}). \\
        $B_t$                   & Jacobian $\nabla_{u}\tilde{f}$ along $(\bm{\xi^{\dagger}}_t, \bm{u^{\dagger}}_t)$ (Eq.~\eqref{eq:jacobians}). \\
        $\Phi(t,s)$             & State-transition matrix from time $s$ to time $t$. \\
        $\alpha$                & Uniform bound on $\nabla_{\xi}\tilde{f}$ along $(\bm{\xi^{\dagger}}, \bm{u^{\dagger}})$ (Assumption \ref{ass:first_order_system_bounds}). \\
        $\beta$                 & Uniform bound on $\nabla_{u}\tilde{f}$ along $(\bm{\xi^{\dagger}}, \bm{u^{\dagger}})$ (Assumption \ref{ass:first_order_system_bounds}). \\
        $H$                     & Uniform bound on $\nabla^2_{\xi}\tilde{f}$ along $(\bm{\xi^{\dagger}}, \bm{u^{\dagger}})$ (Assumption\ref{ass:second_order_system_bounds}). \\
        $\underline{u}^\dagger/\overline{u}^\dagger$                    & Uniform lower/upper bound on $\|\bm{u^{\dagger}}\|$ (Assumption \ref{ass:control_bounds}). \\
        $f_{\min}^{\bm{u}^\dagger}/f_{\max}^{\bm{u}^\dagger}$          & Effective minimum/maximum forcing amplitude (Assumption \ref{ass:control_bounds}). \\
        $\delta$                & Admissible error radius. \\
        $\delta \tau_{\text{max}}$      & Maximum allowable missed thrust duration. \\
        \bottomrule[1pt]
    \end{tabular}
    \label{tab:notation}
\end{table}
\asnotes{Stating the notation}

Let $\tilde{f}:[t_0, t_f]\times\reals^{n}\times\reals^{m}\to\reals^{n}$ be at least twice continuously differentiable function describing the temporal evolution of a nonlinear dynamical system with control affine dynamics
\begin{equation} \label{eq:dynamics}
    \dot{\xi}_t = \tilde{f} \bigl(\bm{\xi}_t, u_t\bigr) \equiv f \bigl(\bm{\xi}_t\bigr) + u \bigl(\bm{\xi}_t, u_t\bigr) \qquad \forall t \in [t_0, t_f]; \qquad \bm{\xi}_{t_0} = \bm{\Xi}_0 \in \reals^{n},
\end{equation}
where $\bm{\xi}_t$ denotes the state driven by the control $\bm{u}_t$ at time $t$.
A reference pair $(\bm{\xi^{\dagger}}, \bm{u^{\dagger}})$ satisfies
\begin{equation} \label{eq:reference_dynamics}
    \dot{\xi}^{\dagger}_t = \tilde{f} \bigl(\bm{\xi^{\dagger}}_t, \bm{u^{\dagger}}_t\bigr) \qquad \forall t \in [t_0, t_f]; \qquad \bm{\xi^\dagger}_{t_0} = \bm{\Xi}_0\in \reals^{n},
\end{equation}
while the realization pair $(\bm{\xi^{\omega}}, \bm{u^{\omega}})$ satisfies
\begin{equation} \label{eq:realization_dynamics}
    \dot{\xi}^{\omega}_t = \tilde{f}\bigl(\bm{\xi^{\omega}}_t, \bm{u^{\omega}}_t\bigr) \qquad \forall t \in [t_0, t_f]; \qquad \bm{\xi}^\omega_{t_0} = \bm{\Xi}_0\in \reals^{n},
\end{equation}
During an MTE characterized by the random variable $\omega \in \Omega$ beginning at time $\tau_1(\omega)$ and ending at time $\tau_2(\omega) = \tau_1(\omega) + \Delta \tau(\omega)$, where $\Delta \tau > 0$ denotes the missed thrust duration, the control is identically zero i.e.,
\begin{equation}
    \bm{u^{\omega}}_t = 0 \qquad \forall t \in [\tau_1, \tau_1 + \Delta \tau].
\end{equation}
Over this time interval, we define the error state as the deviation between the reference trajectory $\xi^\dagger$ and the realization trajectory $\xi^\omega$ as
\begin{equation} \label{eq:error}
    \bm{\tilde{\xi}}_t \equiv \bm{\xi^{\omega}}_t - \bm{\xi^{\dagger}}_t \qquad \forall t \in [\tau_1, \tau_1 + \Delta \tau]; \qquad \bm{\tilde{\xi}}_{\tau_1} = 0,
\end{equation}
where $\bm{\xi^{\dagger}}$ denotes the solution of Eq.~\eqref{eq:reference_dynamics} and $\bm{\xi^{\omega}}$ denotes the solution of Eq.~\eqref{eq:realization_dynamics}. 
Similarly, we can also define the deviation in the corresponding control as
\begin{equation}
    \bm{\tilde{u}}_t \equiv \bm{u^{\omega}}_t-\bm{u^{\dagger}}_t=-\bm{u^{\dagger}}_t \qquad \forall t \in [\tau_1, \tau_1 + \Delta \tau]
\end{equation}
Linearizing the dynamics along the reference trajectory with respect to the state $\bm{\xi}$ and the control $\bm{u}$ introduces the first order sensitivity information given by:
\begin{equation} \label{eq:jacobians}
    A_t \equiv \nabla_{\xi} \tilde{f} \left(\bm{\xi^{\dagger}}_t, \bm{u^{\dagger}}_t\right) \qquad \text{and} \qquad B_t \equiv \nabla_{u} \tilde{f} \left(\bm{\xi^{\dagger}}_t, \bm{u^{\dagger}}_t\right).
\end{equation}
which defines the temporal evolution of the error dynamics:
\begin{equation}
    \dot{\tilde{\bm{\xi}}}_t^\omega
    =
    A_t \tilde{\bm{\xi}} + B_t \tilde{\bm{u}}, \quad \bm{\xi}_0^\omega=\bm{\xi}_0^\dagger, \quad \bm{\xi}_1^\omega\in\Xi_1.
\end{equation}

\subsubsection{Assumptions} \label{subsubsec:theoretical bounds on maximum missed thrust duration:assumptions}

The analysis that follows requires a set of regularity conditions on the nonlinear dynamics as well as bounds on the reference control solution. 
These assumptions ensure that the linearization along the reference trajectory is well defined, that the associated Taylor remainders admit uniform bounds, and that the forcing induced by the nominal control does not degenerate during the MTE. 
Collectively, these conditions provide the analytical framework within which the error dynamics can be bounded and the maximum allowable missed thrust duration certificate can be derived. 
The assumptions are stated below in a form suitable for direct use in the subsequent lemmas and the main theorem.

\begin{enumerate}[label=\textbf{(A\arabic*)},ref=A\arabic*]
    \item\label{ass:function_is_C2}
    \textbf{Regularity:} 
    $\tilde{f} \in C^{2}$ on a neighborhood $\mathcal T_{\rho} \equiv \{\bm{\xi}_t : \|\bm{\xi}_t-\bm{\xi^{\dagger}}_t\| \le \rho\} \ \forall \ t$.
    \item\label{ass:first_order_system_bounds}
    \textbf{Bounds on first order dynamics: } $\exists$ constants $\alpha,\beta\geq0$ s.t. 
    \begin{equation}
        \|\nabla_{\xi}\tilde{f}(\bm{\xi}^\dagger,\bm{u}^\dagger)\|\le\alpha \qquad \text{and} \qquad \|\nabla_{u}\tilde{f}(\bm{\xi}^\dagger,\bm{u}^\dagger)\|\le\beta \qquad \forall t\in[\tau_1, \tau_1 + \Delta \tau].
    \end{equation}
    \item\label{ass:second_order_system_bounds}
    \textbf{Bounds on second order dynamics:} For some $H>0$,
    \begin{equation}
        \|{\nabla_{ \xi}^{2}\tilde{f}(\bm{\xi}^\dagger,\bm{u}^\dagger)}\| \le H \qquad \forall t\in[\tau_1, \tau_1 + \Delta \tau],
    \end{equation}
    \item\label{ass:control_bounds}
    \textbf{Bounds on control input: } $\exists$ constants $\underline{u}^{\dagger}, \overline{u}^{\dagger}>0$ s.t.  
    \begin{equation}
        0 < \underline{u}^{\dagger}\le\|\bm{u^{\dagger}}_t\|\le\overline{u}^{\dagger} \qquad \forall t\in[\tau_1, \tau_1 + \Delta \tau].
    \end{equation}
    In conjunction with the uniform bounds from Assumption \ref{ass:first_order_system_bounds}, we can therefore define the \emph{minimal} and \emph{maximal} effective forcing amplitude trivially as
    \begin{equation}
        f_{\min}^{\bm{u}^\dagger} \equiv \beta\underline{u}^{\dagger} \qquad \text{and} \qquad f_{\max}^{\bm{u}^\dagger} \equiv \beta\overline{u}^{\dagger},
    \end{equation}
    respectively.
\end{enumerate}
\asnotes{Stating the assumptions}

Assumptions \ref{ass:function_is_C2}-\ref{ass:control_bounds} specify the regularity conditions and control bounds under which the LL linearization is well posed. 
The smoothness and uniform bounds on the first and second order derivatives ensure that the Taylor remainder can be controlled in terms of the state deviation, while the control magnitude bounds guarantee a non-degenerate forcing amplitude along the reference trajectory. 
These conditions collectively provide the analytical foundation required to establish a finite safe error radius and, subsequently, an explicit upper bound on the admissible missed thrust duration in the main theorem. 
For any given reference trajectory $\bm{\xi^{\dagger}}$, the associated constants may be computed a-priori through numerical evaluation of the Jacobians and Hessians along the reference trajectory.
Once obtained, they enter directly into the certification bound in the main theorem presented below.
\asnotes{Describing the assumptions}

\subsection{Main Theorem} \label{subsec:theoretical bounds on maximum missed thrust duration:main theorem}

\ifthenelse{\boolean{includefigures}}
{
    \begin{figure}[!htb]
    \centering
    \includegraphics[keepaspectratio, width=0.7\linewidth]{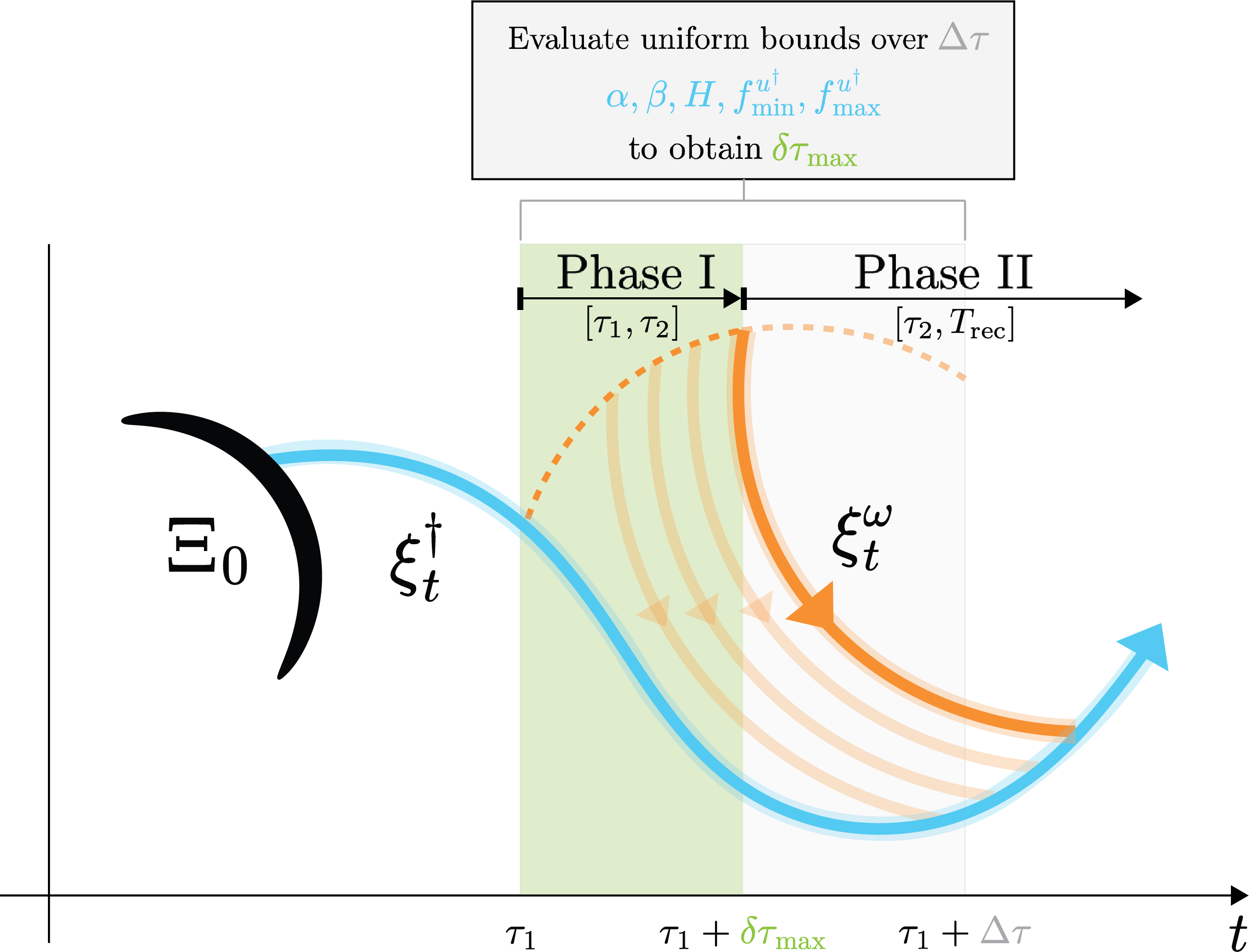}
    \caption{Two phases in the robustness analysis for missed thrust recovery. 
    \textbf{Phase I:} Uniform bounds $(\alpha,\beta,H,f_u^{\min},f_u^{\max})$ are evaluated over the outage interval $[\tau_1,\tau_2]$ to certify a maximum admissible missed thrust duration $\delta\tau_{\max}$ for which the linear model remains appropriate (see \S~\ref{sec:theoretical bounds on maximum missed thrust duration}).
    \textbf{Phase II:} Over the recovery interval $[\tau_2,T_{\mathrm{rec}}]$, recovery feasibility is interpreted using a finite-horizon controllability-energy analysis of the realized deviation dynamics (see \S~\ref{subsec:experimental results relative spacecraft motion:recovery after missed thrust event}).
}
    \label{fig:main_theorem}
    \end{figure}
}
{
}

\begin{theorem}[\textbf{Maximum Missed Thrust Duration For Sufficiency Of Linear Approximation Of LL}]
\label{thm:maximum_missed_thrust_duration}
Under assumptions \ref{ass:function_is_C2}-\ref{ass:control_bounds}, given any $\varepsilon \in (0,1)$, define the \emph{safe radius} $\delta=\delta(\varepsilon)$ as the largest $\delta>0$ such that, whenever $\|\bm{\tilde{\xi}}_t\|\leq \delta$, the nonlinear remainder satisfies
$\|r(t,\bm{\tilde{\xi}}_t)\| \leq \varepsilon\|A_t \bm{\tilde{\xi}}_t + B_t \bm{\tilde{u}}_t\|$ for all $t\in[\tau_1,\tau_1+\Delta\tau]$.
Here $r(t,\bm{\tilde{\xi}}_t)$ denotes the second order Taylor remainder of the dynamics
about the reference trajectory $(\bm{\xi}_t^\dagger, \bm{u^\dagger}_t)$, i.e., 
\begin{equation}
    r(t,\bm{\tilde{\xi}}_t)  \equiv  \tilde{f}\big(\bm{\xi}_t^\dagger+\bm{\tilde{\xi}}_t, \bm{u^\dagger}_t+\bm{\tilde{u}}_t\big) - \tilde{f}\big(\bm{\xi}_t^\dagger, \bm{u^\dagger}_t\big) - A_t\bm{\tilde{\xi}}_t - B_t\bm{\tilde{u}}_t,
\end{equation}
with $A_t \equiv \nabla_\xi \tilde{f}(\bm{\xi}_t^\dagger, \bm{u^\dagger}_t)$ and $B_t \equiv \nabla_u \tilde{f}(\bm{\xi}_t^\dagger, \bm{u^\dagger}_t)$ (See Appendix \ref{app:taylor_expansion} for more details).
Then, there exists a unique $\delta\tau_{\max}\in(0,\Delta\tau)$ which defines the maximum missed thrust duration for the sufficiency of the linear approximation of the LL problem.
Writing $\Delta \equiv \alpha^2-2H f^{\dagger}_{u,\max}$, $r_1 \equiv (-\alpha + \sqrt{\Delta})/H$, and $r_2 \equiv (-\alpha - \sqrt{\Delta})/H$, it is possible to derive analytical expressions for $\delta\tau_{\textrm{max}}$ given by:
\begin{enumerate}[label=(\roman*)]
    \item \emph{If $\Delta>0$},
    \begin{equation} \label{eq:Tmax-real}
        \delta\tau_{\max} = \frac{2}{H (r_{1}-r_{2})} \ln \Biggl(\frac{r_{2} \bigl( \delta - r_{1}\bigr)}{r_{1} \bigl( \delta - r_{2}\bigr)}\Biggr).
    \end{equation}
    \item \emph{If $\Delta=0$},
    \begin{equation} \label{eq:Tmax-double}
        \delta\tau_{\max} = \frac{\delta}{\tfrac{\alpha}{2} \delta + \tfrac{\alpha^2}{2H}}.
    \end{equation}
    \item \emph{If $\Delta<0$},
    \begin{equation} \label{eq:Tmax-complex}
        \delta\tau_{\max} = \frac{1}{\gamma} \Bigl(\tan^{-1} \Bigl(\tfrac{\delta + \tfrac{\alpha}{H}}{\gamma}\Bigr) - \phi\Bigr).
    \end{equation}
    where $\gamma \equiv \tfrac{1}{H}\sqrt{2H f_{\max}^{\bm{u}^\dagger} - \alpha^2}$ and $\phi \equiv \tan^{-1}\Bigl(\tfrac{\alpha}{\gamma H}\Bigr)$.
\end{enumerate}
\end{theorem}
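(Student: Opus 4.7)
The plan is to reduce the vector error dynamics during the MTE to a scalar Riccati-type differential inequality for $y(t) \equiv \|\bm{\tilde\xi}_t\|$ and then to integrate it in closed form, with the three cases in the theorem arising naturally from the sign of the discriminant of the quadratic right hand side.

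First I would rewrite the error dynamics in perturbation form. Expanding $\tilde{f}$ about $(\bm{\xi}^\dagger_t, \bm{u}^\dagger_t)$, the exact error equation during the outage $t\in[\tau_1,\tau_1+\Delta\tau]$ reads
\begin{equation}
    \dot{\bm{\tilde\xi}}_t = A_t\bm{\tilde\xi}_t + B_t\bm{\tilde u}_t + r(t,\bm{\tilde\xi}_t), \qquad \bm{\tilde\xi}_{\tau_1}=\bm{0},
\end{equation}
with $\bm{\tilde u}_t = -\bm{u}^\dagger_t$ on the outage interval. Applying Assumption \ref{ass:first_order_system_bounds} gives $\|A_t\bm{\tilde\xi}_t\|\le\alpha y$ and $\|B_t\bm{\tilde u}_t\|\le\beta\,\overline{u}^\dagger = f_{\max}^{\bm{u}^\dagger}$, while Assumption \ref{ass:second_order_system_bounds} combined with Taylor's theorem with integral remainder produces the standard quadratic estimate $\|r(t,\bm{\tilde\xi}_t)\|\le (H/2)\,y^2$, valid so long as $\bm{\tilde\xi}_t$ stays in the neighborhood $\mathcal{T}_\rho$ of Assumption \ref{ass:function_is_C2}. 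Taking Dini derivatives of $y$ and combining these three bounds yields the scalar comparison inequality
\begin{equation} \label{eq:proofplan-riccati}
    \dot y \le \tfrac{H}{2}\, y^2 + \alpha\, y + f_{\max}^{\bm{u}^\dagger}, \qquad y(\tau_1)=0.
\end{equation}
I expect each of the three ingredients (linearization, Taylor remainder bound, and comparison principle) to be isolated as one of the supporting lemmas promised in \S\ref{subsec:theoretical bounds on maximum missed thrust duration:supporting lemmas}, so at this step I would simply cite them.

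Next, I would identify $\delta\tau_{\max}$ with the first-passage time at $\delta$ of the solution $z$ of \eqref{eq:proofplan-riccati} with equality. Because $f_{\max}^{\bm{u}^\dagger}>0$ and the right hand side is nonnegative and strictly increasing in $z$ on $[0,\infty)$, $z$ is strictly monotone, which gives both uniqueness and the representation
\begin{equation}
    \delta\tau_{\max} = \int_0^\delta \frac{dz}{\tfrac{H}{2}z^2 + \alpha\, z + f_{\max}^{\bm{u}^\dagger}}.
\end{equation}
The denominator factors or completes the square according to the sign of $\Delta = \alpha^2 - 2H f_{\max}^{\bm{u}^\dagger}$. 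For (i) $\Delta>0$, partial fractions with roots $r_1,r_2$ produce a logarithmic antiderivative that evaluates to \eqref{eq:Tmax-real}. For (ii) $\Delta=0$, the denominator collapses to $\tfrac{H}{2}(z+\alpha/H)^2$ and a direct rational antiderivative yields \eqref{eq:Tmax-double}. For (iii) $\Delta<0$, completing the square as $\tfrac{H}{2}\bigl[(z+\alpha/H)^2 + \gamma^2\bigr]$ with $\gamma$ as defined in the statement gives an arctangent primitive whose evaluation between $0$ and $\delta$ matches \eqref{eq:Tmax-complex}, with $\phi$ encoding the lower endpoint.

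The main obstacle is not the calculus in the last step but the clean justification of the scalar reduction: the map $t\mapsto\|\bm{\tilde\xi}_t\|$ need not be everywhere differentiable, so one must work with upper Dini derivatives and invoke a Bihari-type comparison lemma to legitimize $y(t)\le z(t)$ pointwise. One must also verify that $\bm{\tilde\xi}_t$ does not leave $\mathcal T_\rho$ before $y$ reaches $\delta$, which is automatic once $\delta$ is chosen consistently with both the safe-radius hypothesis of the theorem and the neighborhood radius $\rho$ of Assumption \ref{ass:function_is_C2}. Once that bookkeeping is in place, the three closed-form expressions follow by routine antidifferentiation.
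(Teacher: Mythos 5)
Your proposal is correct and follows essentially the same route as the paper: reduce the error dynamics to the scalar Riccati comparison inequality $\dot\rho \le \tfrac{H}{2}\rho^2+\alpha\rho+f_{\max}^{\bm{u}^\dagger}$ with $\rho_{\tau_1}=0$ (the paper's Lemma~\ref{lem:lemma_3}), take $\delta$ from the relative-remainder condition (Lemma~\ref{lem:lemma_2}), and exploit strict monotonicity of the envelope to define $\delta\tau_{\max}$ as the unique first-passage time at $\delta$, with the three cases governed by the sign of $\Delta$. Your first-passage integral $\int_0^\delta dz/(\tfrac{H}{2}z^2+\alpha z+f_{\max}^{\bm{u}^\dagger})$ is just the separated form of the paper's ``solve the Riccati ODE in closed form and invert at $\bar\rho=\delta$,'' and your remark about Dini derivatives is a sound (and slightly more careful) treatment of the norm's differentiability at $\bm{\tilde{\xi}}_{\tau_1}=0$ than the chain-rule step in the paper's Lemma~\ref{lem:lemma_3}.
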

\asnotes{Stating the main theorem}

\begin{proof}[Proof]
Let $\varepsilon\in(0,1)$ be fixed.
By Lemma~\ref{lem:lemma_2}, $\delta=\delta(\varepsilon)$ is the largest $\delta>0$ such that, whenever $\|\bm{\tilde{\xi}}_t\|\leq \delta$, the nonlinear remainder satisfies
$\|r(t,\bm{\tilde{\xi}}_t)\| \leq \varepsilon\|A_t \bm{\tilde{\xi}}_t + B_t \bm{\tilde{u}}_t\|$ over the MTE in the (shifted) time interval $t\in[0, \Delta\tau]$.
By Lemma~\ref{lem:lemma_3}, the error norm $\rho_t\equiv\|\tilde{\xi}_t\|$ is bounded from above over this time interval
\begin{equation}
    \rho_t\le \bar\rho_t \ \forall \ t\in[0, \Delta\tau]
\end{equation}
by a scalar envelope $\bar\rho_t$ satisfying
\begin{equation}
    \dot{\bar\rho}_t=\alpha\bar\rho_t+f_{\max}^{\bm{u}^\dagger}+\tfrac{H}{2}\bar\rho_t^2,\qquad \bar\rho_0=0.
\end{equation}
Since $\alpha, f_{\max}^{\bm{u}^\dagger}, H \ge 0$ and $\bar\rho_0=0$, we have $\dot{\bar\rho}_0=f_{\max}^{\bm{u}^\dagger}\ge0$.
The right–hand side remains nonnegative for $\bar\rho\ge 0$, and hence $\bar\rho_t$ is (strictly) increasing over the interval $t\in[0, \Delta\tau]$. 
Therefore, there exists a unique time $\delta\tau_{\max}\in(0,\Delta\tau)$ such that $\bar\rho(\delta\tau_{\max})=\delta$ and for all $t\in[0, \Delta\tau]$ we have $\bar\rho_t\le \delta$ which automatically ensures $\rho_t\le \delta$ as well. Consequently, the relative remainder bound from Lemma~\ref{lem:lemma_2} holds throughout $[0, \Delta\tau]$, which is precisely the desired sufficiency condition for validity of the linear approximation in the LL subproblem.
Deriving explicit expressions for $\delta\tau_{\max}$ is trivial, and can be obtained simply by inverting the closed-form envelope $\bar\rho$ from Lemma~\ref{lem:lemma_3} (i.e., Eqs.~\ref{eq:lemma_3_case_1}, ~\ref{eq:lemma_3_case_2},~\ref{eq:lemma_3_case_3}) at the threshold $\bar\rho(\delta\tau_{\max})=\delta$.
\end{proof}

\subsection{Supporting Lemmas} \label{subsec:theoretical bounds on maximum missed thrust duration:supporting lemmas}

We next present the auxiliary results which are used to prove the main theorem. 
Recall from Eq.~\eqref{eq:error} that the deviation $\bm{\tilde{\xi}}_t$ between the realization and reference trajectories evolves according to
\begin{equation} \label{eq:full_error}
    \dot{\bm{\tilde{\xi}}}_t  =  A_t \bm{\tilde{\xi}}_t + B_t \bm{\tilde{u}}_t + r(t,\bm{\tilde{\xi}}_t) \qquad \forall t \in [\tau_1,\tau_1+\Delta\tau]; \quad \bm{\tilde{\xi}}_{\tau_1}=0,
\end{equation}
where $r(t,\bm{\tilde{\xi}}_t)$ denotes the second order Taylor remainder of the dynamics
about the reference trajectory $(\bm{\xi}_t^\dagger, \bm{u^\dagger}_t)$. 
It is convenient to decompose the exact error trajectory into a linear part and a nonlinear correction,
\begin{equation} \label{eq:full_error_decomposition}
    \bm{\tilde{\xi}}_t = \bm{\tilde{\xi}}^{\mathrm{L}}_t + \bm{\tilde{\xi}}^{\mathrm{NL}}_t.
\end{equation}
The \emph{linear component} $\bm{\tilde{\xi}}^{\mathrm{L}}$ satisfies
\begin{equation} \label{eq:linear_error}
    \dot{\bm{\tilde{\xi}}}^{\mathrm{L}}_t  =  A_t \bm{\tilde{\xi}}^{\mathrm{L}}_t + B_t \bm{\tilde{u}}_t \qquad \forall t \in [\tau_1,\tau_1+\Delta\tau]; \quad \bm{\tilde{\xi}}^{\mathrm{L}}_{\tau_1}=0,
\end{equation}
while the \emph{nonlinear correction} $\bm{\tilde{\xi}}^{\mathrm{NL}}$ evolves according to
\begin{equation} \label{eq:nonlinear_error}
    \dot{\bm{\tilde{\xi}}}^{\mathrm{NL}}_t  =  r(t,\bm{\tilde{\xi}}_t) \qquad \forall t \in [\tau_1,\tau_1+\Delta\tau]; \quad \bm{\tilde{\xi}}^{\mathrm{NL}}_{\tau_1}=0.
\end{equation}
The lemmas that follow provide bounds on each component of this decomposition, as well as on the total error norm, which together underpin the proof of the main result.

\subsubsection{Bounding The Linear Error Term} \label{subsubsec:theoretical bounds on maximum missed thrust duration:supporting lemmas:bounding the linear error term}

As a first step, we isolate the linear component $\bm{\tilde{\xi}}^{\mathrm{L}}$ in the decomposition Eq.~\eqref{eq:full_error_decomposition} and derive an explicit upper bound on its norm over the missed thrust interval $[\tau_1,\tau_1+\Delta\tau]$.

\begin{lemma}[\textbf{Bounding The Linear Error Term}] \label{lem:lemma_1}
Consider the linear system below governing the temporal evolution of the linear error component $\bm{\tilde{\xi}}^\mathrm{L}$
\begin{equation}
    \dot{\bm{\tilde{\xi}}}^{\mathrm{L}}_t  =  A_t \bm{\tilde{\xi}}^{\mathrm{L}}_t + B_t \bm{\tilde{u}}_t \qquad \forall t \in [\tau_1,\tau_1+\Delta\tau]; \quad \bm{\tilde{\xi}}^{\mathrm{L}}_{\tau_1}=0,
\end{equation}
Under Assumptions \ref{ass:function_is_C2}, \ref{ass:first_order_system_bounds} and \ref{ass:control_bounds},
\begin{equation}
    \|\bm{\tilde{\xi}}^{\mathrm{L}}_t\| \le
    \begin{cases}
        \displaystyle
            \frac{f_{\max}^{\bm{u}^\dagger}}{\alpha}\Bigl(e^{\alpha t}-1\Bigr), & \alpha>0,\\
        \displaystyle
            f_{\max}^{\bm{u}^\dagger}t, & \alpha=0.
    \end{cases}
    \qquad \forall t \in [\tau_1, \tau_1 + \Delta\tau]
\end{equation}
\end{lemma}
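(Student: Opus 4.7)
The plan is to collapse the vector-valued linearized error dynamics into a scalar differential inequality for $\rho_t \equiv \|\bm{\tilde{\xi}}^{\mathrm{L}}_t\|$ and then integrate it in closed form. Because during the MTE the realization control vanishes, we have $\bm{\tilde{u}}_t = -\bm{u^\dagger}_t$, so Assumptions \ref{ass:first_order_system_bounds} and \ref{ass:control_bounds} immediately give the uniform forcing bound $\|B_t \bm{\tilde{u}}_t\| \le \beta \overline{u}^{\dagger} = f_{\max}^{\bm{u}^\dagger}$ on $[\tau_1, \tau_1 + \Delta\tau]$.

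Next, I would differentiate the squared norm $\rho_t^2 = \langle \bm{\tilde{\xi}}^{\mathrm{L}}_t, \bm{\tilde{\xi}}^{\mathrm{L}}_t\rangle$ along the linear dynamics of Eq.~\eqref{eq:linear_error}. Using Cauchy--Schwarz together with the operator-norm bound $\|A_t\| \le \alpha$ from Assumption \ref{ass:first_order_system_bounds}, I obtain $\rho_t \dot{\rho}_t \le \alpha \rho_t^2 + f_{\max}^{\bm{u}^\dagger} \rho_t$, which, wherever $\rho_t > 0$, reduces to the scalar differential inequality
\begin{equation}
    \dot{\rho}_t \le \alpha \rho_t + f_{\max}^{\bm{u}^\dagger}, \qquad \rho_{\tau_1} = 0.
\end{equation}
Since the initial condition is zero, $\rho_t$ is not a priori differentiable at $\tau_1$, but this technicality is handled by working instead with $\sqrt{\rho_t^2 + \varepsilon}$ and passing $\varepsilon \downarrow 0$, or equivalently by invoking the Dini upper derivative; either way the scalar inequality extends to the full MTE interval.

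Finally, I would integrate this first-order linear inequality. For $\alpha > 0$, multiplication by the integrating factor $e^{-\alpha(t-\tau_1)}$ and integration from $\tau_1$ to $t$ gives
\begin{equation}
    \rho_t \le \frac{f_{\max}^{\bm{u}^\dagger}}{\alpha}\bigl(e^{\alpha(t-\tau_1)} - 1\bigr),
\end{equation}
while for $\alpha = 0$ the inequality reduces to $\dot{\rho}_t \le f_{\max}^{\bm{u}^\dagger}$ and direct integration yields $\rho_t \le f_{\max}^{\bm{u}^\dagger}(t-\tau_1)$. Interpreting the argument $t$ in the lemma statement as elapsed time measured from the onset of the MTE, as is done in the proof of Theorem~\ref{thm:maximum_missed_thrust_duration}, these expressions match the stated bounds.

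The main (and essentially only) obstacle is the non-smoothness of the norm at zero, which is a standard nuisance resolved by the smoothing argument above; everything else is a routine scalar Gronwall-type estimation. An alternative route uses the state-transition matrix $\Phi(t,s)$ together with the variation-of-constants formula $\bm{\tilde{\xi}}^{\mathrm{L}}_t = \int_{\tau_1}^t \Phi(t,s) B_s \bm{\tilde{u}}_s \, ds$ and the Gronwall-based bound $\|\Phi(t,s)\| \le e^{\alpha(t-s)}$, but the scalar-ODE route is more direct and avoids introducing additional machinery.
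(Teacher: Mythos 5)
Your proof is correct, but it takes a genuinely different route from the paper. The paper proves Lemma~\ref{lem:lemma_1} via the variation-of-constants representation $\bm{\tilde{\xi}}^{\mathrm{L}}_t = \int_{\tau_1}^{t}\Phi(t,s)B_s\bm{\tilde{u}}_s\,\mathrm{d}s$, first establishing the state-transition-matrix bound $\|\Phi(t,s)\|\le e^{\alpha(t-s)}$ by a Gr\"onwall argument on $g_t=\|\Phi(t,s)\|$, then bounding the integrand by $f_{\max}^{\bm{u}^\dagger}$ and integrating --- i.e., exactly the ``alternative route'' you mention and set aside at the end. You instead collapse the dynamics to the scalar differential inequality $\dot{\rho}_t \le \alpha\rho_t + f_{\max}^{\bm{u}^\dagger}$ and integrate with an integrating factor. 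Both yield the same bound. Your route is more elementary and, notably, is structurally the same comparison-principle argument the paper itself uses later in Lemma~\ref{lem:lemma_3} for the full nonlinear error norm (where the quadratic remainder term turns the linear inequality into a Riccati inequality), so your Lemma~\ref{lem:lemma_1} proof is essentially the linear special case of that technique; the cost is the non-differentiability of the norm at $\rho_{\tau_1}=0$, which you correctly flag and dispatch via smoothing or Dini derivatives. The paper's route avoids that technicality entirely and has the side benefit that the bound $\|\Phi(t,s)\|\le e^{\alpha(t-s)}$ and the variation-of-constants machinery are reused elsewhere (e.g., in the finite-horizon controllability Gramian of \S~\ref{subsec:experimental results relative spacecraft motion:recovery after missed thrust event}). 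Your observation that the $t$ appearing in the stated bound must be read as elapsed time from the onset of the MTE is also correct and consistent with the time shift used in the proof of Theorem~\ref{thm:maximum_missed_thrust_duration}.
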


\begin{proof}
Consider the linearized error dynamics given by Eq.~\eqref{eq:linear_error}.
Associated with this system is the \emph{homogeneous system} whose state-transition matrix $\Phi(t,s)$ is defined as the unique solution of
\begin{equation} \label{eq:state_transition_matrix}
    \frac{\partial}{\partial t}\Phi(t,s) = A_t \Phi(t,s) \qquad \forall t, s \in [\tau_1, \tau_1 + \Delta \tau], \qquad \Phi(s,s)=I.
\end{equation}
By the standard variation-of-constants formula,
\begin{equation} \label{eq:variation_of_constants}
    \bm{\tilde{\xi}}^{\mathrm{L}}_t = \int_{\tau_1}^{t}\Phi(t,s)B_s\bm{\tilde{u}}_s \mathrm{d}s \qquad \forall t\in[\tau_1, \tau_1 + \Delta \tau].
\end{equation}
Taking the norm yields
\begin{equation} \label{eq:variation_of_constants_inequality}
    \begin{aligned}
        \| \bm{\tilde{\xi}}^{\mathrm{L}}_t \| &= \| \int_{\tau_1}^{t}\Phi(t,s)B_s\bm{\tilde{u}}_s \mathrm{d}s \| \\
        &\le \int_{\tau_1}^{t} \| \Phi(t,s)B_s\bm{\tilde{u}}_s \| \mathrm{d}s \qquad (\text{Minkowski's inequality}) \\
        &\le \int_{\tau_1}^{t} \| \Phi(t,s) \| \| B_s\bm{\tilde{u}}_s \| \mathrm{d}s \qquad (\text{submultiplicativity}).
    \end{aligned}
\end{equation}

\paragraph{Step 1: Upper bounding $\| \Phi(t,s) \|$}
\leavevmode\\
For $h \ll 1$, we know:
\begin{equation}
    \Phi(t+h,s) = \Phi(t+h,t)\Phi(t,s) = (I+hA_t+o(h))\Phi(t,s).
\end{equation}
We define the scalar function $g_t \equiv \|\Phi(t,s)\| \ \forall t \geq s$ s.t.,
\begin{equation}
    \begin{aligned}
        g_{t+h}-g_t &=\bigl\|\Phi(t+h,s)\bigr\|-\bigl\|\Phi(t,s)\bigr\| \\
        &=\bigl\|\Phi(t+h,t)\Phi(t,s)\bigr\|-\|\Phi(t,s)\| \\
        &\le\|\Phi(t+h,t)\|\|\Phi(t,s)\|-\|\Phi(t,s)\| \qquad (\text{submultiplicativity}) \\
        &=\bigl(\|\Phi(t+h,t)\|-1\bigr)g_t \\
        &=\bigl(\|I + hA_t + o(h)\|-1\bigr)g_t \\
        &\le\bigl(\|I\| + \|hA_t\| + \|o(h)\| - 1 \bigr)g_t \qquad (\text{triangle inequality}) \\
        &\le\bigl(\|hA_t\| + \|o(h)\| \bigr)g_t \qquad (\| I \| = 1 \text{ for any induced operator norm}).
    \end{aligned}
\end{equation}
Divide by $h>0$ and take the limit $h\to0^{+}$:
\begin{equation} \label{eq:gdot_bound}
    \begin{aligned}
        \dot{g}_t &\le \|A_t\|g_t \leq \alpha g_t
    \end{aligned}
\end{equation}
since
\begin{equation}
    \frac{g_{t+h}-g_t}{h}\xrightarrow{h\to0^{+}}\dot{g}_t \qquad \text{and} \qquad \frac{o(h)}{h}\xrightarrow{h\to0^{+}}0.
\end{equation}
By Gr\"{o}nwall lemma, we have
\begin{equation}
    g_t \le e^{\alpha(t-s)}.
\end{equation}
Recalling that $g_t=\|\Phi(t,s)\|$ we have proved:
\begin{equation} \label{eq:state_transition_matrix_norm_bound}
    \|\Phi(t,s)\| \le e^{\alpha(t-s)}, \qquad \tau_1\le s\le t\le \tau_2.
\end{equation}

\paragraph{Step 2: Upper bounding $\| B_s\bm{\tilde{u}}_s \|$}
\leavevmode\\
From Assumption \ref{ass:control_bounds}, we know
\begin{equation} \label{eq:forcing_amplitude_bound}
    \|B_s\bm{\tilde{u}}_s\| \le \beta\bar{u}^\dagger = f_{\max}^{\bm{u}^\dagger}.
\end{equation}

\paragraph{Step 3: Obtaining an explicit bound for $\| \bm{\tilde{\xi}}^{\mathrm{L}}_t \|$}
\leavevmode\\
Inserting the results from Eqs.~\eqref{eq:state_transition_matrix_norm_bound} and~\eqref{eq:forcing_amplitude_bound} in Eq.~\eqref{eq:variation_of_constants_inequality}, we obtain
\begin{equation}
    \|\bm{\tilde{\xi}}^{\mathrm{L}}_t\| \le \int_{\tau_1}^{t}e^{\alpha(t-s)}f_{\max}^{\bm{u}^\dagger}\mathrm{d}s.
\end{equation}
For $\alpha>0$, the integral evaluates to
\begin{equation}
    \int_{\tau_1}^{t}e^{\alpha(t-s)}\mathrm{d}s = e^{\alpha t}\int_{\tau_1}^{t}e^{-\alpha s}\mathrm{d}s = e^{\alpha t}\frac{1-e^{-\alpha t}}{\alpha} = \frac{e^{\alpha t}-1}{\alpha}.
\end{equation}
Hence,
\begin{equation} \label{eq:elin_bound_ageq0}
    \|\bm{\tilde{\xi}}^{\mathrm{L}}_t\| \le \frac{f_{\max}^{\bm{u}^\dagger}}{\alpha}\bigl(e^{\alpha t}-1\bigr), \qquad (\alpha>0).
\end{equation}
For $\alpha=0$, the integral evaluates to
\begin{equation}
    \int_{\tau_1}^{t}e^{\alpha(t-s)} \mathrm{d}s = \int_{\tau_1}^{t}e^{0(t-s)} \mathrm{d}s = \int_{\tau_1}^{t} 1 \mathrm{d}s = t.
\end{equation}
Hence, 
\begin{equation} \label{eq:elin_bound_aeq0}
    \|\bm{\tilde{\xi}}^{\mathrm{L}}_t\| \le f_{\max}^{\bm{u}^\dagger} t, \qquad (\alpha=0).
\end{equation}
which coincides with the limit $\displaystyle\lim_{\alpha\to0^{+}} \tfrac{f_{\max}^{\bm{u}^\dagger}}{\alpha}(e^{\alpha t}-1)=f_{\max}^{\bm{u}^\dagger}t$, so no discontinuity arises.
\end{proof}

\subsubsection{Safe State-Space Radius For A Given Error Tolerance} \label{subsubsec:theoretical bounds on maximum missed thrust duration:safe state-space radius for a given error tolerance}

Building on Lemma~\ref{lem:lemma_1}, which bounds the magnitude of the \emph{linear} deviation induced by an MTE, we now characterize a \emph{safe state-space radius} $\delta(\varepsilon)$ which guarantees the second order Taylor remainder remains uniformly small relative to the linear deviation.

\begin{lemma}[\textbf{Safe State-Space Radius For A Given Error Tolerance}] \label{lem:lemma_2}
Under assumptions \ref{ass:function_is_C2}-\ref{ass:control_bounds}, the second order Taylor remainder obeys the relative-error bound
\begin{equation}
    \|r(t,\bm{\tilde{\xi}}_t)\| \leq \varepsilon\|A_t\bm{\tilde{\xi}}_t + B_t\bm{\tilde{u}}_t\| \qquad \forall t\in[\tau_1,\tau_1+\Delta\tau],
\end{equation}
if the error trajectory satisfies $\|\bm{\tilde{\xi}}_t\| \leq \delta = \delta(\varepsilon)$ for all $t\in[\tau_1,\tau_1+\Delta\tau]$ for any $\varepsilon\in(0,1)$, where $\delta$ is chosen as
\begin{equation}
    \delta  \equiv  
    \begin{cases}
        \min\bigl\{\hat{\delta}, {f_{\min}^{\bm{u}^\dagger}}/{\alpha}\bigr\}, & \alpha>0, \\ 
        \hat{\delta}, & \alpha=0,
    \end{cases}
    \quad \text{where} \quad
    \hat{\delta}  \equiv 
    \begin{cases}
        \displaystyle
            \frac{\sqrt{\varepsilon^{2}\alpha^{2} + 2\varepsilon H f_{\min}^{\bm{u}^\dagger}} - \varepsilon\alpha}{H}, & \alpha>0, \\
        \displaystyle
            \sqrt{\tfrac{2\varepsilon f_{\min}^{\bm{u}^\dagger}}{H}}, & \alpha=0.
    \end{cases}
\end{equation}
\end{lemma}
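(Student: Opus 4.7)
The plan is to establish the relative remainder inequality by bounding the second order Taylor remainder from above pointwise in $\|\bm{\tilde{\xi}}_t\|$, bounding the linearized forcing $A_t\bm{\tilde{\xi}}_t + B_t\bm{\tilde{u}}_t$ from below, and then solving the resulting scalar quadratic inequality for the admissible deviation magnitude. First, invoking Taylor's theorem together with the $C^2$ regularity of \ref{ass:function_is_C2} and the uniform Hessian bound $H$ of \ref{ass:second_order_system_bounds} yields the pointwise estimate
\begin{equation*}
    \|r(t,\bm{\tilde{\xi}}_t)\| \leq \tfrac{H}{2}\|\bm{\tilde{\xi}}_t\|^2.
\end{equation*}
Since the realization thrust vanishes throughout the outage, $\bm{\tilde{u}}_t = -\bm{u^{\dagger}}_t$, so $\|B_t\bm{\tilde{u}}_t\|$ admits the lower bound $f_{\min}^{\bm{u}^\dagger}$ from \ref{ass:control_bounds}, while $\|A_t\bm{\tilde{\xi}}_t\| \leq \alpha\|\bm{\tilde{\xi}}_t\|$ by \ref{ass:first_order_system_bounds}. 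The reverse triangle inequality then produces
\begin{equation*}
    \|A_t\bm{\tilde{\xi}}_t + B_t\bm{\tilde{u}}_t\| \geq f_{\min}^{\bm{u}^\dagger} - \alpha\|\bm{\tilde{\xi}}_t\|.
\end{equation*}

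Next, I would enforce the sufficient condition that the remainder be dominated by $\varepsilon$ times this lower bound, namely $\tfrac{H}{2}\|\bm{\tilde{\xi}}_t\|^2 \leq \varepsilon\bigl(f_{\min}^{\bm{u}^\dagger} - \alpha\|\bm{\tilde{\xi}}_t\|\bigr)$. Setting $z \equiv \|\bm{\tilde{\xi}}_t\|$ recasts this as the scalar quadratic inequality
\begin{equation*}
    \tfrac{H}{2}z^2 + \varepsilon\alpha z - \varepsilon f_{\min}^{\bm{u}^\dagger} \leq 0.
\end{equation*}
For $\alpha > 0$, the unique positive root of the associated equality, obtained via the quadratic formula, is exactly $\hat{\delta} = \bigl(\sqrt{\varepsilon^{2}\alpha^{2} + 2\varepsilon H f_{\min}^{\bm{u}^\dagger}} - \varepsilon\alpha\bigr)/H$; since the parabola opens upward, the inequality holds precisely for $z \in [0,\hat{\delta}]$. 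For $\alpha = 0$, the linear term disappears and the inequality collapses to $z \leq \sqrt{2\varepsilon f_{\min}^{\bm{u}^\dagger}/H}$, recovering the second branch of $\hat{\delta}$ in the lemma statement.

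The subtle step, and the principal obstacle, lies in ensuring the reverse-triangle lower bound $f_{\min}^{\bm{u}^\dagger} - \alpha\|\bm{\tilde{\xi}}_t\|$ is strictly positive, as otherwise the sufficient condition is vacuous and $\hat{\delta}$ alone does not control the relative remainder. I would address this by imposing the geometric guard $\|\bm{\tilde{\xi}}_t\| < f_{\min}^{\bm{u}^\dagger}/\alpha$ whenever $\alpha > 0$, which is exactly the second entry in the minimum defining $\delta$. Setting $\delta = \min\{\hat{\delta}, f_{\min}^{\bm{u}^\dagger}/\alpha\}$ for $\alpha>0$ and $\delta = \hat{\delta}$ for $\alpha=0$ therefore simultaneously guarantees positivity of the lower bound and satisfaction of the quadratic inequality, yielding the desired relative error estimate uniformly on $[\tau_1,\tau_1+\Delta\tau]$. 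No Grönwall argument is required here, as the bound is purely algebraic and pointwise in $\bm{\tilde{\xi}}_t$; the dynamical component enters only later, in Lemma~\ref{lem:lemma_3}, when this safe radius is translated into a duration bound.
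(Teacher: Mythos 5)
Your proposal is correct and follows essentially the same route as the paper's proof: the $\tfrac{H}{2}\|\bm{\tilde{\xi}}_t\|^2$ remainder bound, the reverse-triangle lower bound $f_{\min}^{\bm{u}^\dagger}-\alpha\|\bm{\tilde{\xi}}_t\|$, the resulting quadratic inequality yielding $\hat{\delta}$, and the guard $\delta\le f_{\min}^{\bm{u}^\dagger}/\alpha$ to keep the denominator positive. The only cosmetic difference is that you argue pointwise in $z=\|\bm{\tilde{\xi}}_t\|$ while the paper substitutes the uniform bound $\delta$ directly; the content is identical.
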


\begin{proof}
Let $\varepsilon\in(0,1)$ be fixed.
We seek conditions on $\delta$ which guarantees that if $\|\bm{\tilde{\xi}}_t\| \leq \delta = \delta(\varepsilon)$ for all $t\in[\tau_1,\tau_1+\Delta\tau]$,
the nonlinear remainder obeys the relative error bound
\begin{equation}
    \|r(t,\bm{\tilde{\xi}}_t)\|  \le  \varepsilon \|A_t \bm{\tilde{\xi}}_t+B_t \bm{\tilde{u}}_t\| \quad \forall t\in[\tau_1,\tau_1+\Delta\tau].
\end{equation}

\paragraph{Step 1: Lower bounding $\|A_t \bm{\tilde{\xi}}_t+B_t \bm{\tilde{u}}_t\|$}
\leavevmode\\
By Assumption \ref{ass:control_bounds}, $0<f_{\min}^{\bm{u}^\dagger}\le \|B_t \bm{\tilde{u}}_t\|$.  By Assumption \ref{ass:first_order_system_bounds}, $\|A_t \bm{\tilde{\xi}}_t\|\le \alpha \|\bm{\tilde{\xi}}_t\|\le \alpha \delta$.  Using the reverse triangle inequality (Eq.~\eqref{eq:reverse_triangle_inequality}) yields
\begin{equation}
    \|A_t \bm{\tilde{\xi}}_t+B_t \bm{\tilde{u}}_t\| \ge \bigl|\|B_t \bm{\tilde{u}}_t\|-\|A_t \bm{\tilde{\xi}}_t\|\bigr| \ge f_{\min}^{\bm{u}^\dagger} - \alpha \delta.
\end{equation}
In order for this lower bound to be non‑negative (so we can divide by it), we require $\delta \le \frac{f_{\min}^{\bm{u}^\dagger}}{\alpha}$ if $\alpha > 0$.
If $\alpha=0$, then $\|A_t \bm{\tilde{\xi}}_t\|\equiv 0$ and no additional constraint is needed.

\paragraph{Step 2: Upper bounding $\|r(t,\bm{\tilde{\xi}}_t)\|$}
\leavevmode\\
Assumption \ref{ass:second_order_system_bounds} and the Taylor remainder formula give
\begin{equation}
    \|r(t,\bm{\tilde{\xi}}_t)\|  \le  \frac{H}{2} \|\bm{\tilde{\xi}}_t\|^2 \le \frac{H}{2} \delta^2.
\end{equation}

\paragraph{Step 3: Imposing the relative error bound}
\leavevmode\\
Combining the above estimates, the relative error requirement
    $\|r(t,\bm{\tilde{\xi}}_t)\| \le \varepsilon \|A_t \bm{\tilde{\xi}}_t+B_t \bm{\tilde{u}}_t\|$ will hold if
\begin{equation} \label{eq:delta_inequality}
    \frac{\tfrac{H}{2} \delta^2}{f_{\min}^{\bm{u}^\dagger}-\alpha \delta} \le \varepsilon \qquad \text{and} \qquad f_{\min}^{\bm{u}^\dagger}-\alpha \delta  > 0.
\end{equation}
The positivity of the denominator is guaranteed by our assumption from earlier $\delta \le \frac{f_{\min}^{\bm{u}^\dagger}}{\alpha}$.
Multiplying Eq.~\eqref{eq:delta_inequality} by the positive denominator yields the quadratic inequality
\begin{equation}
    \frac{H}{2} \delta^2 + \varepsilon \alpha \delta - \varepsilon f_{\min}^{\bm{u}^\dagger}  \le  0.
\end{equation}
Solving this inequality gives
\begin{equation}
    \delta \le 
    \begin{cases}
        \displaystyle
            \frac{\sqrt{\varepsilon^2 \alpha^2 + 2 \varepsilon H f_{\min}^{\bm{u}^\dagger}} - \varepsilon \alpha}{H}, & \alpha>0, \\
        \displaystyle
            \sqrt{\tfrac{2 \varepsilon f_{\min}^{\bm{u}^\dagger}}{H}}, & \alpha=0.
    \end{cases}
\end{equation}
We denote the right-hand side by $\hat{\delta}$.  In summary, any $\delta\ge 0$ satisfying $\delta\le\hat{\delta}$ and (for $\alpha>0$) $\delta \le f_{\min}^{\bm{u}^\dagger}/\alpha$ will make Eq.~\eqref{eq:delta_inequality} true, and hence the relative error bound holds.

\paragraph{Step 4: Choosing $\delta$}
\leavevmode\\
Define
\begin{equation}
    \delta  = 
    \begin{cases}
        \min \bigl\{\hat{\delta}, {f_{\min}^{\bm{u}^\dagger}}/{\alpha}\bigr\}, & \alpha>0, \\
        \hat{\delta}, & \alpha=0.
    \end{cases}
\end{equation}
Whenever $\|\bm{\tilde{\xi}}_t\|\le\delta$ on $[\tau_1,\tau_1+\Delta\tau]$, the remainder obeys
$\|r(t,\bm{\tilde{\xi}}_t)\| \le \varepsilon \|A_t \bm{\tilde{\xi}}_t+B_t \bm{\tilde{u}}_t\|$ for all $t$ in that interval.
\end{proof}

\subsubsection{Scalar Envelope For The Total Error Norm} \label{subsubsec:theoretical bounds on maximum missed thrust duration:scalar envelope for the total error norm}

Whereas Lemma~\ref{lem:lemma_2} provides a deviation threshold $\delta$ under which the linearization error is controlled, we next construct a scalar envelope that upper bounds the \emph{total} nonlinear deviation $\|\bm{\tilde{\xi}}_t\|$ over the missed thrust interval, enabling an explicit temporal bound.

\begin{lemma}[\textbf{Scalar envelope for the total error norm}] \label{lem:lemma_3}
Let $\bm{\tilde{\xi}}_t$ denote the error trajectory under the full nonlinear dynamics, and set $\rho_t  \equiv  \|\bm{\tilde{\xi}}_t\|$.
Under Assumptions \ref{ass:function_is_C2}--\ref{ass:control_bounds}, $\rho_t$ is bounded above by the solution $\bar\rho_t$ of the scalar Riccati equation i.e., 
\begin{equation}
    \rho_t  \le  \bar\rho_t, \qquad \forall t\in[\tau_1,\tau_1+\Delta\tau],
\end{equation}
where 
\begin{equation} \label{eq:riccati}
    \dot{\bar\rho}_t = \alpha\bar\rho_t + f_{\max}^{\bm{u}^\dagger} + \tfrac{H}{2}\bar\rho_t^2, \qquad \bar\rho_{\tau_1}=0.
\end{equation}
The closed-form expression for $\bar\rho_t$ depends on the sign of the discriminant $\Delta  \equiv  \alpha^2 - 2H f_{\max}^{\bm{u}^\dagger}$ associated with the quadratic quation $Hr^2 + \alpha r + f_{\max}^{\bm{u}^\dagger}=0$. 
Writing
\begin{equation}
    r_{1}  \equiv  \tfrac{-\alpha + \sqrt{\Delta}}{H} \qquad \text{and} \qquad r_{2}  \equiv  \tfrac{-\alpha - \sqrt{\Delta}}{H}.
\end{equation}
we can write analytical expressions for $\bar\rho_t$
\begin{enumerate}[label=(\roman*)]
    \item If $\Delta > 0$, then
    \begin{equation} \label{eq:lemma_3_case_1}
        \bar\rho_t = \frac{r_{1}\bigl(1 - e^{\tfrac{H}{2}(r_{1}-r_{2}) t}\bigr)} {1 - \tfrac{r_{1}}{r_{2}} e^{\tfrac{H}{2}(r_{1}-r_{2}) t}}.
    \end{equation}
    \item If $\Delta = 0$, then
    \begin{equation} \label{eq:lemma_3_case_2}
        \bar\rho_t = \frac{\alpha^2 t}{2H\left(1 - \tfrac{\alpha}{2} t\right)}.
    \end{equation}
    \item If $\Delta < 0$, then
    \begin{equation} \label{eq:lemma_3_case_3}
        \bar\rho_t = - \frac{\alpha}{H} + \gamma \tan\bigl(\gamma t + \phi\bigr), \quad \text{where} \quad 
        \gamma \equiv \tfrac{1}{H}\sqrt{2H f_{\max}^{\bm{u}^\dagger} - \alpha^2} \quad \text{and} \quad \phi \equiv \tan^{-1}\Bigl(\tfrac{\alpha}{\gamma H}\Bigr).
    \end{equation}
\end{enumerate}
\end{lemma}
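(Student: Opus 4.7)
The plan is to reduce the vector-valued error dynamics to a scalar differential inequality for $\rho_t=\|\bm{\tilde{\xi}}_t\|$, dominate this inequality by the scalar Riccati ODE in Eq.~\eqref{eq:riccati} via a comparison argument, and finally integrate the Riccati ODE in closed form by separation of variables to obtain the three cases.

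\textbf{Step 1 (scalar differential inequality).} Starting from Eq.~\eqref{eq:full_error}, I would invoke the standard fact that for any absolutely continuous vector function $y(t)$, the upper Dini derivative of its norm satisfies $D^{+}\|y(t)\|\le\|\dot{y}(t)\|$. Applied to $\bm{\tilde{\xi}}_t$ together with the triangle inequality, this gives $D^{+}\rho_t\le \|A_t\bm{\tilde{\xi}}_t\|+\|B_t\bm{\tilde{u}}_t\|+\|r(t,\bm{\tilde{\xi}}_t)\|$. Assumption~\ref{ass:first_order_system_bounds} yields $\|A_t\bm{\tilde{\xi}}_t\|\le\alpha\rho_t$; Assumptions~\ref{ass:first_order_system_bounds} and~\ref{ass:control_bounds} together with $\bm{\tilde{u}}_t=-\bm{u^\dagger}_t$ during the outage yield $\|B_t\bm{\tilde{u}}_t\|\le f_{\max}^{\bm{u}^\dagger}$; and the integral form of the second-order Taylor remainder combined with Assumption~\ref{ass:second_order_system_bounds} yields $\|r(t,\bm{\tilde{\xi}}_t)\|\le\tfrac{H}{2}\rho_t^{2}$. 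Assembling these gives the scalar inequality
\begin{equation*}
D^{+}\rho_t \;\le\; \alpha\rho_t + f_{\max}^{\bm{u}^\dagger} + \tfrac{H}{2}\rho_t^{2},\qquad \rho_{\tau_1}=0.
\end{equation*}

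\textbf{Step 2 (comparison principle).} The right-hand side $F(\rho)\equiv\alpha\rho+f_{\max}^{\bm{u}^\dagger}+\tfrac{H}{2}\rho^{2}$ is $C^{1}$, hence locally Lipschitz on $[0,\infty)$, and $\bar\rho$ satisfies $\dot{\bar\rho}_t=F(\bar\rho_t)$ with matching initial datum $\bar\rho_{\tau_1}=0$. A standard scalar comparison lemma then yields $\rho_t\le\bar\rho_t$ throughout $[\tau_1,\tau_1+\Delta\tau]$, provided $\bar\rho_t$ does not blow up on this interval. Because $F(0)=f_{\max}^{\bm{u}^\dagger}>0$, $\bar\rho_t$ leaves the origin strictly monotonically, which is also the monotonicity fact invoked in the proof of Theorem~\ref{thm:maximum_missed_thrust_duration} to define $\delta\tau_{\max}$ by inversion.

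\textbf{Step 3 (closed-form integration).} The Riccati equation separates as $\displaystyle\int_{0}^{\bar\rho_t}\frac{ds}{\tfrac{H}{2}s^{2}+\alpha s+f_{\max}^{\bm{u}^\dagger}}=t-\tau_1$, and the three branches listed in the lemma correspond to the sign of the discriminant $\Delta\equiv\alpha^{2}-2Hf_{\max}^{\bm{u}^\dagger}$ of the quadratic denominator. For $\Delta>0$, I would factor the denominator as $\tfrac{H}{2}(s-r_1)(s-r_2)$ and integrate by partial fractions, then solve algebraically for $\bar\rho_t$ to obtain Eq.~\eqref{eq:lemma_3_case_1}. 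For $\Delta=0$, the denominator becomes a perfect square, the antiderivative reduces to $-2/[H(s+\alpha/H)]$, and direct inversion recovers Eq.~\eqref{eq:lemma_3_case_2}. For $\Delta<0$, completing the square produces $\tfrac{H}{2}[(\bar\rho+\alpha/H)^{2}+\gamma^{2}]$ with $\gamma$ as in the statement, and an arctangent substitution followed by the initial-condition match $\phi=\tan^{-1}(\alpha/(\gamma H))$ gives Eq.~\eqref{eq:lemma_3_case_3}.

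The main obstacle is not analytical but technical bookkeeping: justifying the Dini-derivative step at the initial instant where $\bm{\tilde{\xi}}_{\tau_1}=0$ renders the norm non-smooth (handled by a short continuity argument using $F(0)>0$, which guarantees $\bm{\tilde{\xi}}_t$ leaves the origin in a definite direction), and algebraically rearranging the separated antiderivatives into precisely the quotient/tangent forms displayed in Eqs.~\eqref{eq:lemma_3_case_1}--\eqref{eq:lemma_3_case_3}. Of these, the $\Delta>0$ branch entails the longest routine manipulation because the partial-fraction constants must be collected into the quotient form involving $r_1/r_2$.
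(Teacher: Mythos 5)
Your proposal follows essentially the same route as the paper's proof: derive a scalar differential inequality for $\rho_t$ from the error dynamics and the uniform bounds $\alpha$, $f_{\max}^{\bm{u}^\dagger}$, $\tfrac{H}{2}\rho_t^2$, dominate it by the Riccati ODE via the scalar comparison principle, and integrate by separation of variables/partial fractions according to the sign of $\Delta$. Your use of the upper Dini derivative $D^{+}\|y\|\le\|\dot y\|$ is in fact a small technical improvement over the paper's chain-rule expression $\dot\rho_t=\tfrac{\bm{\tilde{\xi}}_t^{\top}}{\|\bm{\tilde{\xi}}_t\|}\dot{\bm{\tilde{\xi}}}_t$, which is undefined at $t=\tau_1$ where $\bm{\tilde{\xi}}_{\tau_1}=0$; otherwise the two arguments coincide.
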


\begin{proof}
Recall that the exact error dynamics Eq.~\eqref{eq:full_error} can be written
\begin{equation}
    \dot{\bm{\tilde{\xi}}}_t = A_t \bm{\tilde{\xi}}_t+B_t \bm{\tilde{u}}_t+r\bigl(\bm{\tilde{\xi}}_t,t\bigr).
\end{equation}

\paragraph{Step 1: Differentiating the norm $\rho_t$}
\leavevmode\\
Define $\rho_t\equiv\|\bm{\tilde{\xi}}_t\|$.  By the chain rule and the triangle inequality,
\begin{equation}
    \dot\rho_t = \frac{\mathrm d}{\mathrm d t} \|\bm{\tilde{\xi}}_t\| = \frac{\bm{\tilde{\xi}}_t^{\top}}{\|\bm{\tilde{\xi}}_t\|}\dot{\bm{\tilde{\xi}}}_t \le \|A_t\| \rho_t+\|B_t\| \|\bm{\tilde{u}}_t\|+\|r\bigl(\bm{\tilde{\xi}}_t,t\bigr)\|.
\end{equation}

\paragraph{Step 2: Using uniform bounds}
\leavevmode\\
From Assumptions \ref{ass:first_order_system_bounds}, \ref{ass:control_bounds}, and \ref{ass:second_order_system_bounds}, we have
\begin{equation} \label{eq:rho-riccati-ineq}
    \dot\rho_t \le \alpha \rho_t+f_{\max}^{\bm{u}^\dagger}+\frac{H}{2} \rho_t^2 =: g\bigl(\rho_t\bigr), \qquad \rho_{\tau_1}=0.
\end{equation}

\paragraph{Step 3: Constructing an auxiliary equation}
\leavevmode\\
Because $g(r)$ is locally Lipschitz on $r\ge 0$, the initial value problem
\begin{equation} \label{eq:rho-bar-ode}
    \dot{\bar\rho}_t=g\bigl(\bar\rho_t\bigr), \qquad \bar\rho_{\tau_1}=0,
\end{equation}
has a unique solution $\bar\rho:[\tau_1, \tau_1+\Delta \tau]\to\mathbb{R}_{\ge 0}$.
Furthermore, by standard comparison results for scalar ODEs,
any function $\hat\rho_t$ satisfying $\dot{\hat\rho}_t\le g(\hat\rho_t)$ with the same initial condition remains below $\bar\rho_t$ for all $t$.

\paragraph{Step 4: Solving the Riccati equation}
\leavevmode\\
Equation~\eqref{eq:rho-bar-ode} is the Riccati equation
$\dot{\bar\rho} = \alpha \bar\rho + f_{\max}^{\bm{u}^\dagger} + \tfrac{H}{2} \bar\rho^2$.
Its solution with $\bar\rho_0=0$ can be obtained in closed form by factoring the quadratic and integrating with partial fractions.
This yields the three cases summarised in the statement of the lemma:
\begin{enumerate}[label=(\roman*)]
    \item if $\Delta=\alpha^2 - 2 H f_{\max}^{\bm{u}^\dagger}>0$ then $\bar\rho$ is given by Eq.~\eqref{eq:lemma_3_case_1};
    \item if $\Delta=0$ then $\bar\rho$ is given by Eq.~\eqref{eq:lemma_3_case_2};
    \item if $\Delta<0$ then $\bar\rho$ is given by Eq.~\eqref{eq:lemma_3_case_3}.
\end{enumerate}

\paragraph{Step 5: Comparing with the true error}
\leavevmode\\
From Eq.~\eqref{eq:rho-riccati-ineq} and the definition of $\bar{\rho}$ in Eq.~\eqref{eq:rho-bar-ode}, it follows that
$\dot{\rho}_t \le \dot{\bar{\rho}}_t$.
By the scalar comparison principle for ordinary differential equations, we therefore obtain $\rho_t \le \bar{\rho}_t \ \forall \ t \in [\tau_1, \tau_1 + \Delta\tau]$ as claimed.
As illustrated in Fig.~\ref{fig:main_theorem}, once the safe radius $\delta$ is fixed via Lemma~\ref{lem:lemma_2}, we select a $\delta\tau_{\max} > 0$ such that $\bar{\rho}(\delta\tau_{\max}) = \delta$, thereby ensuring the validity of the linear feedback control law over the missed thrust interval.
\end{proof}
\asnotes{Stating the supporting lemmas}

Theorem~\ref{thm:maximum_missed_thrust_duration} provides a \emph{sufficient} condition under which the linear model remains within a prescribed relative error tolerance during the missed thrust interval $[\tau_1,\tau_1+\Delta\tau]$ (Phase I in Fig.~\ref{fig:main_theorem}).
Accordingly, if $\Delta\tau\le\delta\tau_{\max}$, then the deviation is guaranteed to remain inside the safe radius $\delta$, and the linear LQR structure embedded in the UL constraints is analytically justified.
However, the converse does not hold.
Observing $\Delta\tau>\delta\tau_{\max}$ in simulation does not imply that recovery is impossible, nor does it imply loss of controllability after the MTE.
Rather, it only indicates that the particular first-order model validity certificate has been exceeded, so the analysis no longer certifies that the LL linear approximation accurately represents the nonlinear deviation dynamics.
For this reason, in \S~\ref{subsec:experimental results relative spacecraft motion:recovery after missed thrust event}, we introduce a complementary, post-hoc recovery feasibility diagnostic (Phase II in Fig.~\ref{fig:main_theorem}) based on a finite-horizon controllability Gramian computed along the reference trajectory after the outage ends at $\tau_2$.
This controllability energy analysis evaluates whether the available recovery horizon and admissible control authority are, in principle, sufficient to drive the realized deviation back toward the reference, thereby helping interpret why successful recovery may occur even when $\Delta\tau$ exceeds the conservative bound $\delta\tau_{\max}$.
\section{Experimental Results: Relative Spacecraft Motion} \label{sec:experimental results relative spacecraft motion}

In this section, we present the numerical results by applying the proposed framework to a nonlinear relative spacecraft motion problem in which a chaser spacecraft pursues a target spacecraft in a two-body gravitational field.
This setting provides a representative simple test case for assessing both the numerical performance of the single-level formulation and the tightness of the theoretical robustness certificates.

In \S~\ref{subsec:experimental results relative spacecraft motion:cwh equations with third-order nonlinearities}, we introduce the dynamical model to describe the relative motion of the chaser with respect to the target.
In this work, we study two representative scenarios: a circular target orbit and an eccentric target orbit, which together capture different levels of nonlinearity in the relative dynamics.
In \S~\ref{subsec:experimental results relative spacecraft motion:transcription into a nonlinear program}, we present the direct multiple-shooting transcription, specify the constraints defining the numerical setup, and formulate the resulting NLP.
In \S~\ref{subsec:experimental results relative spacecraft motion:constraint jacobian}, we examine the structure of the constraint Jacobian, verifying its correctness through comparison with a finite difference approximation.
In \S~\ref{subsec:experimental results relative spacecraft motion:hessian bounds}, we numerically evaluate Hessian norms of the nonlinear dynamics in a neighborhood of the reference trajectory, thereby numerically validating the curvature assumptions underlying the theoretical error analysis.
In \S~\ref{subsec:experimental results relative spacecraft motion:example solutions}, we present an illustrative example solution from the dataset which highlights key features of the framework.
In \S~\ref{subsec:experimental results relative spacecraft motion:safe state-space radius delta and maximum missed thrust duration delta tau_max}, we verify the regularity and boundedness assumptions underlying the robustness certificate and compare the safe radius and maximum allowable missed thrust duration with numerical experiments.
Finally, in \S~\ref{subsec:experimental results relative spacecraft motion:recovery after missed thrust event}, we examine recovery following an MTE using a finite-horizon controllability energy analysis to interpret the discrepancies between the theoretical bounds and the actual closed loop performance.

\subsection{Clohessy-Wiltshire-Hill Equations With Third Order Nonlinearities} \label{subsec:experimental results relative spacecraft motion:cwh equations with third-order nonlinearities}

We model the relative motion between the target and the chaser spacecraft using the \emph{Clohessy-Wiltshire-Hill} (CWH) framework augmented with second and third order nonlinear terms, under two different fictitious resident orbits for the target spacecraft with varying levels of nonlinearity.

\ifthenelse{\boolean{includefigures}}
{
\begin{figure}[!htb]
    \centering
    \includegraphics[keepaspectratio, width=0.45\linewidth]{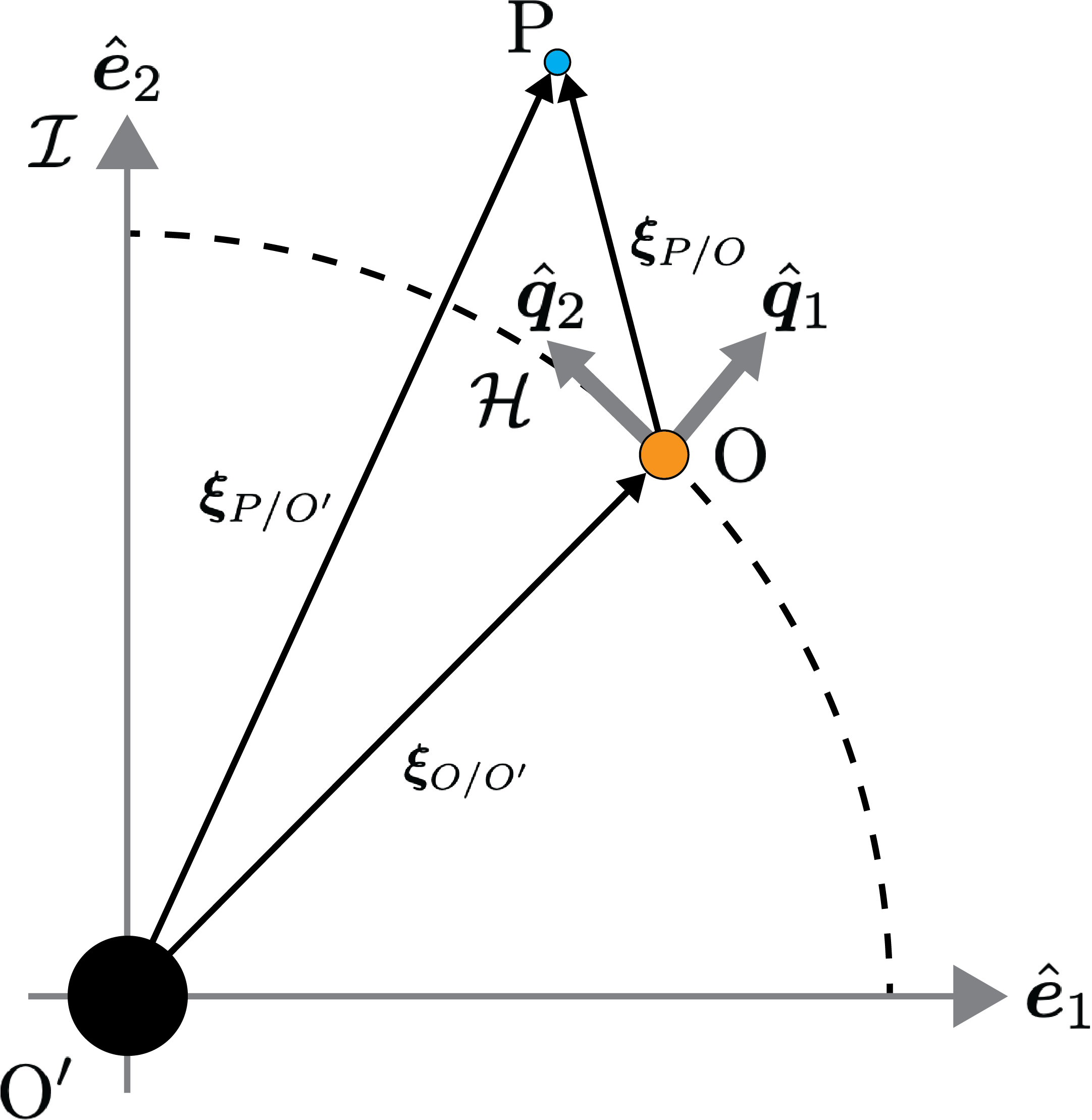}
    \caption{The motion of the chaser spacecraft $\textcolor{DodgerBlue}{P}$ relative to the target spacecraft $\textcolor{Orange}{O}$ can be described in the target centered rotating Hill frame $\mathcal{H}\equiv{\textcolor{Orange}{O}, \hat{\bm{q}}_1, \hat{\bm{q}}_2, \hat{\bm{q}}_3}$, which rotates with angular rate $n$ relative to the inertial frame $\mathcal{I}\equiv{\textcolor{Orange}{O'}, \hat{\bm{q}}_1, \hat{\bm{q}}_2, \hat{\bm{q}}_3}$}.
    \label{fig:cwh_frame}
\end{figure}
}
{
}

In the first case, the target spacecraft is assumed to follow a fictitious circular orbit of radius $|\bm{\xi}_{O'/O}| = R_{O'}$ about the central body $O'$ and possesses no active control authority.
The motion of the chaser spacecraft is described in a rotating reference frame $\mathcal{H}$ (Hill Frame) centered on the target. 
Let $\bm{\xi}_{P/\mathcal{O}}^{\mathcal{H}} \equiv [q_1, q_2, q_3, \dot{q}_1, \dot{q}_2, \dot{q}_3]^\top_{\mathcal{H}} \in \mathbb{R}^6$ denote the chaser's relative state expressed in $\mathcal{H}$, which is centered at the target position $\mathcal{O}$ and rotates with the target, as shown in Fig. \ref{fig:cwh_frame}. 
The orbital dynamics are parameterized by the mean motion $n = \sqrt{\mu/R_{O'}^3}$, where $\mu$ is the gravitational parameter of the central body. 
The numerical values of the orbital parameters used in the subsequent experiments are summarized in Table~\ref{tab:circular_orbit_parameters}. 
Here, $\bm{\hat{\xi}}_{P/\mathcal{O}}^{\mathcal{H}}$ denotes the initial relative state of the chaser spacecraft with respect to the target spacecraft expressed in $\mathcal{H}$, chosen such that the initial relative separation is exactly $1$ km.
\begin{table}[!htb]
\centering
\begin{tabular}{ll}
    \hline
    \textbf{Parameter} & \textbf{Value} \\ \hline
    $R_{O'}$ [km] & $6.871 \times 10^{3}$ \\
    $n$ [s$^{-1}$] & $1.109 \times 10^{-3}$ \\
    $\bm{\hat{\xi}}_{P/\mathcal{O}}^{\mathcal{H}}$ [km, km/s] & $[\frac{\sqrt{2}}{2}, 0, \frac{\sqrt{2}}{2}, 0, 0, 0]^\top$ \\
    \hline
\end{tabular}
\caption{Circular Orbit Parameters}
\label{tab:circular_orbit_parameters}
\end{table}

The translational dynamics of the chaser in the Hill frame are given by:
\begin{equation} \label{eq:cwh_block_form_third}
    \prescript{\mathcal{H}}{}{\frac{d}{dt}}
    \begin{bmatrix}
        \bm{\xi}_{P/\mathcal{O}} \\
        \dot{\bm{\xi}}_{P/\mathcal{O}}
    \end{bmatrix}
    =
    \begin{bmatrix}
        \bm{0}_{3\times3} & \bm{I}_{3\times3} \\
        \bm{\Xi}          & \bm{\Omega}
    \end{bmatrix}
    \begin{bmatrix}
        \bm{\xi}_{P/\mathcal{O}} \\
        \dot{\bm{\xi}}_{P/\mathcal{O}}
    \end{bmatrix}
    +
    \begin{bmatrix}
        \bm{0}_{3\times1}\\
        \bm{u}
    \end{bmatrix}
    +
    \frac{3n^{2}}{2R_{O'}}
    \begin{bmatrix}
        \bm{0}_{3\times1}\\
        \bm{g}^{(2)}(\bm{\xi}_{P/\mathcal{O}})
    \end{bmatrix}
    +
    \frac{n^{2}}{R_{O'}^{2}}
    \begin{bmatrix}
        \bm{0}_{3\times1}\\
        \bm{g}^{(3)}(\bm{\xi}_{P/\mathcal{O}})
    \end{bmatrix},
    \qquad \bm{\xi}_{P/\mathcal{O}}^{\mathcal{H}}(t_0)=\bm{\hat{\xi}}_{P/\mathcal{O}}^{\mathcal{H}},
\end{equation}
where $\bm{u} \equiv [u_{q_1}, u_{q_2}, u_{q_3}]^\top$ represents the external thrust acceleration in $\mathcal{H}$.
The first-order terms defining the linear motion in the Hill frame is defined as:
\begin{equation} \label{eq:cwh_g1}
    \bm{\Xi} \equiv
    \begin{bmatrix}
        3n^2 & 0 & 0 \\
        0 & 0 & 0 \\
        0 & 0 & -n^2
    \end{bmatrix},
    \qquad
    \bm{\Omega} \equiv
    \begin{bmatrix}
        0 & 2n & 0 \\
        -2n & 0 & 0 \\
        0 & 0 & 0
    \end{bmatrix}.
\end{equation}
where $n$ describes the mean motion. 
The second and third order corrections $\bm{g}^{(2)}$ and $\bm{g}^{(3)}$ are:
\begin{equation} \label{eq:cwh_g2_g3}
    \bm{g}^{(2)}(\bm{\xi}_{P/\mathcal{O}})=
    \begin{bmatrix}
        -2{q_1}^{2}+{q_2}^{2}+{q_3}^{2} \\
         2{q_1}{q_2} \\
         2{q_1}{q_3}
    \end{bmatrix},
    \qquad
    \bm{g}^{(3)}(\bm{\xi}_{P/\mathcal{O}})=
    \begin{bmatrix}
        4{q_1}^3-6{q_1}{q_2}^2-6{q_1}{q_3}^2\\
        -6{q_1}^2{q_2}+\frac{3}{2}\left({q_2}^3+{q_2} {q_3}^2\right)\\
        -6{q_1}^2{q_3}+\frac{3}{2}\left({q_2}^2 {q_3}+{q_3}^3\right)
    \end{bmatrix}.
\end{equation}
Equivalently, the equations split by components are:
\begin{equation}
    \begin{aligned}
        \ddot{q}_1 - 2n\dot{q}_2 - 3n^2 {q_1}  &= 
        u_{q_1}
        + \frac{n^2}{R_{O'}}\left[-3{q_1^2}+\frac{3}{2}\left({q_2^2}+{q_3^2}\right)\right]
        + \frac{n^2}{R_{O'}^2}\left[4{q_1^3}-6{q_1}{q_2^2}-6{q_1}{q_3^2}\right], \\
        \ddot{q}_2 + 2n\dot{q}_1  &= 
        u_{q_2}
        + \frac{n^2}{R_{O'}}\left(3{q_1}{q_2}\right)
        + \frac{n^2}{R_{O'}^2}\left[-6{q_1^2}{q_2}+\frac{3}{2}\left({q_2^3}+{q_2} {q_3^2}\right)\right], \\
        \ddot{q}_3 + n^2 {q_3} &= 
        u_{q_3}
        + \frac{n^2}{R_{O'}}\left(3{q_1}{q_3}\right)
        + \frac{n^2}{R_{O'}^2}\left[-6{q_1^2}{q_3}+\frac{3}{2}\left({q_2^2} {q_3}+{q_3^3}\right)\right].
    \end{aligned}
\label{eq:cwh_third_order_dimensional}
\end{equation}

\subsubsection{Extension to Eccentric Orbits}
\label{subsec:experimental results relative spacecraft motion:cwh equations with-third-order-nonlinearities:extension-to-eccentric-orbits}

For the second case where the target spacecraft resides on an eccentric orbit, the classical CWH equations are no longer valid because the local orbital rate and gravity gradient terms vary along the orbit thereby increasing the degree of nonlinearity in the problem.
To obtain a tractable dynamical model, we reformulate the dynamics using the target's true anomaly $\nu$ as the independent variable, following the \emph{Tschauner-Hempel} (TH) framework~\cite{tschauner_1965_rendezvous}.

Let the target follow a Keplerian ellipse with semi-major axis $a$, eccentricity $e$, and true anomaly $\nu$.
The instantaneous orbital radius is given by:
\begin{equation}
    r(\nu) = \frac{a(1-e^2)}{1+e\cos\nu}.
\end{equation}
The rate of change in the true anomaly is given by:
\begin{equation} \label{eq:r_dnu}
    \dot{\nu}(\nu)
    = \sqrt{\frac{\mu}{a^3}} \frac{(1+e\cos\nu)^2}{(1-e^2)^{3/2}},
\end{equation}
and differentiating this expression yields:
\begin{equation} \label{eq:r_ddnu}
    \ddot{\nu}(\nu)
    = -2\sqrt{\frac{\mu}{a^3}}
      \frac{e\sin\nu(1+e\cos\nu)}{(1-e^2)^{3/2}}.
\end{equation}
As before, let $\bm{\xi}_{P/\mathcal{O}}^{\mathcal{H}}(\nu) \equiv [q_1, q_2, q_3, q_1', q_2', q_3']^\top_{\mathcal{H}} \in \mathbb{R}^6$ denote the chaser's relative state expressed in $\mathcal{H}$ but this time as a function of true anomaly where $(\cdot)' \equiv d(\cdot)/d\nu$.
The chain rule relates the derivatives with respect to time to the derivatives with respect to true anomaly via:
\begin{equation}
    \dot{\bm{q}} = \dot{\nu}\bm{q}', 
    \qquad
    \ddot{\bm{q}} = \ddot{\nu}\bm{q}' + \dot{\nu}^2 \bm{q}'',
    \qquad
    \bm{q} \equiv [q_1,q_2,q_3]^\top.
\end{equation}
Using the TH formulation, the translational dynamics of the chaser in $\mathcal{H}$, including second and third order gravitational corrections, can be written in block form as:
\begin{equation} \label{eq:cwh_block_form_third_eccentric_nu}
    \prescript{\mathcal{H}}{}{\frac{d}{d\nu}}
    \begin{bmatrix}
        \bm{\xi}_{P/\mathcal{O}} \\
        \bm{\xi}_{P/\mathcal{O}}'
    \end{bmatrix}
    =
    \begin{bmatrix}
        \bm{0}_{3\times3} & \bm{I}_{3\times3} \\
        \bm{\Xi}_e(\nu)   & \bm{\Omega}_e(\nu)
    \end{bmatrix}
    \begin{bmatrix}
        \bm{\xi}_{P/\mathcal{O}} \\
        \bm{\xi}_{P/\mathcal{O}}'
    \end{bmatrix}
    +
    \begin{bmatrix}
        \bm{0}_{3\times1}\\
        \frac{\bm{u}}{\dot{\nu}(\nu)^{-2}}
    \end{bmatrix}
    +
    \frac{\mu}{\dot{\nu}(\nu)^2 r(\nu)^{4}}
    \begin{bmatrix}
        \bm{0}_{3\times1}\\
        \bm{g}^{(2)}(\bm{\xi}_{P/\mathcal{O}})
    \end{bmatrix}
    +
    \frac{\mu}{\dot{\nu}(\nu)^2 r(\nu)^{5}}
    \begin{bmatrix}
        \bm{0}_{3\times1}\\
        \bm{g}^{(3)}(\bm{\xi}_{P/\mathcal{O}})
    \end{bmatrix},
\end{equation}
where $\bm{u} \equiv [u_{q_1}, u_{q_2}, u_{q_3}]^\top$ is the thrust acceleration in $\mathcal{H}$ as before, and $\bm{g}^{(2)}$, $\bm{g}^{(3)}$ are the same second and third order correction terms defined in Eq.~\eqref{eq:cwh_g2_g3}.
The anomaly dependent linear operators are
\begin{equation} \label{eq:cwh_Xi_Omega_eccentric_nu}
    \bm{\Xi}_e(\nu) \equiv
    \begin{bmatrix}
        -\dfrac{3+e\cos\nu}{1+e\cos\nu} & 2\dfrac{e\sin\nu}{1+e\cos\nu} & 0 \\
        -2\dfrac{e\sin\nu}{1+e\cos\nu} & -\dfrac{e\cos\nu}{1+e\cos\nu} & 0 \\
        0 & 0 & \dfrac{1}{1+e\cos\nu}
    \end{bmatrix},
    \qquad
    \bm{\Omega}_e(\nu) \equiv
    \begin{bmatrix}
        -2\dfrac{e\sin\nu}{1+e\cos\nu} & -2 & 0 \\
        2 & -2\dfrac{e\sin\nu}{1+e\cos\nu} & 0 \\
        0 & 0 & -2\dfrac{e\sin\nu}{1+e\cos\nu}
    \end{bmatrix}.
\end{equation}
Equivalently, the equations split by components are
\begin{equation}
\label{eq:cwh_third_order_anomaly}
    \begin{aligned}
        q_1'' 
        &=
        -2\frac{e\sin\nu}{1+e\cos\nu} q_1'
        - 2 q_2'
        - \frac{3+e\cos\nu}{1+e\cos\nu} q_1
        + 2\frac{e\sin\nu}{1+e\cos\nu} q_2
        \\
        &\quad
        + \frac{1}{\dot{\nu}(\nu)^2}
          \Bigg[
            u_{q_1}
            + \mu r(\nu)^{-4}\left(-3q_1^2 + \tfrac{3}{2}(q_2^2+q_3^2)\right)
            + \mu r(\nu)^{-5}\left(4q_1^3 - 6q_1q_2^2 - 6q_1q_3^2\right)
          \Bigg],
        \\
        q_2'' 
        &=
        2 q_1'
        - 2\frac{e\sin\nu}{1+e\cos\nu} q_2'
        - 2\frac{e\sin\nu}{1+e\cos\nu} q_1
        - \frac{e\cos\nu}{1+e\cos\nu} q_2
        \\
        &\quad
        + \frac{1}{\dot{\nu}(\nu)^2}
          \Bigg[
            u_{q_2}
            + \mu r(\nu)^{-4}\left(3q_1 q_2\right)
            + \mu r(\nu)^{-5}\left(-6q_1^2 q_2 + \tfrac{3}{2}(q_2^3+q_2q_3^2)\right)
          \Bigg],
        \\
        q_3'' 
        &=
        -2\frac{e\sin\nu}{1+e\cos\nu} q_3'
        + \frac{1}{1+e\cos\nu} q_3
        \\
        &\quad
        + \frac{1}{\dot{\nu}(\nu)^2}
          \Bigg[
            u_{q_3}
            + \mu r(\nu)^{-4}\left(3q_1 q_3\right)
            + \mu r(\nu)^{-5}\left(-6q_1^2 q_3 + \tfrac{3}{2}(q_2^2q_3+q_3^3)\right)
          \Bigg].
    \end{aligned}
\end{equation}
In the limit $e \to 0$, one has $r(\nu)\to R_{O'}$ and $\dot{\nu}(\nu)\to n$, and the coefficients in Eq.~\eqref{eq:cwh_third_order_anomaly} reduce to those of the circular case in Eq.~\eqref{eq:cwh_third_order_dimensional} expressed with respect to true anomaly.

For numerical integration and transcription of the optimal control problem, it is convenient to formulate the dynamics in terms of time derivatives of the state variables.
Using the relations
\begin{equation}
    \bm{q}' = \frac{\dot{\bm{q}}}{\dot{\nu}},
    \qquad
    \bm{q}'' = \frac{1}{\dot{\nu}^2}
    \left( \ddot{\bm{q}} - \ddot{\nu}\bm{q}' \right),
\end{equation}
and substituting them in Eq.~\eqref{eq:cwh_third_order_anomaly}, we recover the equations of motion in the time domain as
\begin{equation}
\label{eq:th_time_domain_componentwise_final}
\begin{aligned}
    \ddot{q}_1 
    - 2\dot{\nu}\dot{q}_2 
    - \dot{\nu}^2 q_1 
    - \ddot{\nu}q_2 
    - 2\frac{\mu}{r(\nu)^3} q_1
    &=
    u_{q_1}
    + \mu r(\nu)^{-4}\left[-3q_1^2 + \tfrac{3}{2}(q_2^2+q_3^2)\right]
    + \mu r(\nu)^{-5}\left[4q_1^3 - 6q_1q_2^2 - 6q_1q_3^2\right],
    \\
    \ddot{q}_2 
    + 2\dot{\nu}\dot{q}_1
    + \ddot{\nu}q_1 
    - \dot{\nu}^2 q_2
    + \frac{\mu}{r(\nu)^3} q_2
    &=
    u_{q_2}
    + \mu r(\nu)^{-4}\left[3q_1 q_2\right]
    + \mu r(\nu)^{-5}\left[-6q_1^2 q_2 + \tfrac{3}{2}(q_2^3+q_2q_3^2)\right],
    \\
    \ddot{q}_3 
    + \frac{\mu}{r(\nu)^3} q_3
    &=
    u_{q_3}
    + \mu r(\nu)^{-4}\left[3q_1 q_3\right]
    + \mu r(\nu)^{-5}\left[-6q_1^2 q_3 + \tfrac{3}{2}(q_2^2q_3+q_3^3)\right],
\end{aligned}
\end{equation}
where $\dot{\nu}=\dot{\nu}(\nu)$ and $\ddot{\nu}=\ddot{\nu}(\nu)$ are given by Eqs.~\eqref{eq:r_dnu}–\eqref{eq:r_ddnu}, evaluated along the target spacecraft trajectory.
Equation~\eqref{eq:th_time_domain_componentwise_final} is the eccentric orbit counterpart of the circular orbit model Eq.~\eqref{eq:cwh_third_order_dimensional} and is the form used in the numerical experiments below.

Table~\ref{tab:eccentric_orbit_parameters} summarizes the orbital parameters for the two cases considered in this study for the eccentric orbit. 
Both cases share the same semi-major axis and eccentricity, yielding an identical periapse radius equal to the circular-orbit radius, but differ in the true anomaly interval over which the analysis is performed. 
Specifically, the initial time and the maximum allowable flight time are selected such that the target spacecraft remains within the prescribed true anomaly range.
The reason for choosing these intervals is that the true anomaly range $\nu \in [60^\circ,120^\circ]$ corresponds to motion closer to periapse, where higher orbital velocity and stronger gravitational gradients give rise to stronger nonlinear effects in the relative dynamics.
The interval $\nu \in [120^\circ,180^\circ]$ corresponds to motion closer to apoapse, where orbital velocities are lower and the relative dynamics exhibit weaker nonlinear effects.
\begin{table}[!htb]
\centering
\begin{tabular}{lll}
    \hline
    \textbf{Parameter} & \textbf{Case 1} & \textbf{Case 2} \\ \hline
    $a$ [km] & $22{,}903.33$ & $22{,}903.33$ \\
    $e$ & $0.7$ & $0.7$ \\
    $r_p = a(1-e)$ [km] & $6{,}871.0$ & $6{,}871.0$ \\
    $\nu$ [deg] & $[60^\circ, 120^\circ]$ & $[120^\circ, 180^\circ]$ \\
    $\bm{\hat{\xi}}_{P/\mathcal{O}}^{\mathcal{H}}$ [km, km/s] & $[\frac{\sqrt{2}}{2}, 0, \frac{\sqrt{2}}{2}, 0, 0, 0]^\top$ & $[\frac{\sqrt{2}}{2}, 0, \frac{\sqrt{2}}{2}, 0, 0, 0]^\top$ \\
    \hline
\end{tabular}
\caption{Eccentric Orbit Parameters}
\label{tab:eccentric_orbit_parameters}
\end{table}
Table~\ref{tab:control_parameters} summarizes the weighting matrices defining the LQR problem in the LL, the relative error tolerance $\varepsilon$ (Lemma~\ref{lem:lemma_2}), and the maximum allowable thrust acceleration $T_{\mathrm{max}}^{\mathrm{acc}}$ for the reference and the realization solutions used throughout the numerical study.
\begin{table}[!htb]
\centering
\begin{tabular}{ll}
    \hline
    \textbf{Parameter} & \textbf{Value} \\ \hline
    $Q$ & $I_6$ \\
    $R_u$ & $0.1\, I_6$ \\
    $Q_f$ & $10\, I_6$ \\
    $\varepsilon$ & $0.05$ \\
    $T_{\mathrm{max}}^{\mathrm{acc}}$ [m/s$^2$] & $10^{-3}$ \\
    \hline
\end{tabular}
\caption{Control Parameters}
\label{tab:control_parameters}
\end{table}

\subsection{Transcription Into A Nonlinear Program} \label{subsec:experimental results relative spacecraft motion:transcription into a nonlinear program}

We transcribe the continuous-time bi-level optimal control problem in Eq.~\eqref{eq:mte mpcc} into a finite-dimensional NLP by a direct forward-backward multiple shooting transcription method. 
The reference trajectory is discretized into $N^\dagger\in\mathbb{Z}_+$ segments, each consisting of ${n_s^\dagger}$ integration steps.
The state and control vectors are of dimension $n_x\in\mathbb{Z}_+$ and $n_u\in\mathbb{Z}_+$ respectively. 
The decision variables corresponding to the reference trajectory are the final times $T^\dagger\in\mathbb{R}$, segment boundary states $\bm{X}_{k^\dagger}^\dagger\in\mathbb{R}^{n_x}$ and piecewise-constant controls $\bm{U}_{k^\dagger}^\dagger\in\mathbb{R}^{n_u}$ for each reference segment $k^\dagger \in \mathcal{K}^\dagger$, where $\mathcal{K}^\dagger$ denotes the segment index set for the reference solution.
Similarly, the realization trajectory is discretized into $N^\omega\in\mathbb{Z}_+$ segments, with the decision variables corresponding to the realization trajectory being the final times $T^\omega\in\mathbb{R}$, segment boundary states $\bm{X}_{k^\omega}^\omega\in\mathbb{R}^{n_x}$, piecewise-constant controls $\bm{U}_{k^\omega}^\omega\in\mathbb{R}^{n_u}$, and the discrete costates $\bm{\Lambda}_{k^\omega}^\omega\in\mathbb{R}^{n_x}$ for each realization segment $k^\omega \in \mathcal{K}^\omega$, where $\mathcal{K}^\omega$ denotes the segment index set for the realization solution. 
The indices corresponding to the missed thrust segments are collected in the set $\mathcal{M}^\omega$ (see Eq.~\ref{eq:nlp_m_omega} for the mathematical definition).
Consequently, the total number of realization segments $N^\omega$ depends on the cardinality of the missed thrust set $|\mathcal{M}^\omega|$, and its exact value is determined by an adaptive segmentation strategy (see Sinha and Beeson~\cite{amlans_initial_2025j} for details).
The resulting decision variables, summarized in Table~\ref{tab:transcription parameters}, define a single-level NLP which preserves the problem's sparse structure.

\begin{table}[h]
\centering
\begin{tabular}{ll}
    \hline
    \textbf{Parameter}                                            & \textbf{Value}                \\ \hline
    $N^\dagger$                                                   & 50                            \\
    $n^\dagger_s$                                                 & 100                           \\
    $N^\omega$                                                    & 40                            \\
    $\mathcal{M}^\omega$                                          & \{5, \ldots, 14\}            \\ 
    $n^\omega_s$                                                  & 100                           \\ \hline
\end{tabular}
\caption{Transcription Parameters}
\label{tab:transcription parameters}
\end{table}

The bi-level objective function $J^{\dagger}$, which is the same as the leader's nominal objective, may be expressed under an appropriate numerical discretization scheme as:
\begin{equation} \label{eq:leader_obj}
    J^{\dagger}=w_t^{\dagger}T^{\dagger}+w_u^{\dagger}\sum_{k=0}^{N^\dagger-1}\bigl|\bm{U}_{k^\dagger}^\dagger\bigr|,
\end{equation}
with fixed weights $w_t^{\dagger},w_f^{\dagger}\ge 0$ denoting the relative weights between the flight time and the control effort respectively.
In our numerical experiments, we only consider a minimum fuel case, and therefore set $w_t^{\dagger} = 0$ and $w_f^{\dagger} = 1$.
The follower objective function $J^{\omega}$, which does not directly affect the bi-level objective function but used to derive the first-order optimality conditions for the LL, can be written as:
\begin{equation} \label{eq:follower_obj}
    J^{\omega}=\sum_{k=0}^{N^\omega-1}\Bigl(\tilde{\bm{X}}_k{}^{\top}Q\tilde{\bm{X}}_k+\tilde{\bm{U}}_k{}^{\top} R\tilde{\bm{U}}_k\Bigr)+\tilde{\bm{X}}_{N^\omega}{}^{\top}Q_f\tilde{\bm{X}}_{N^\omega},
\end{equation}
where $\tilde{\bm{X}}_k\equiv \bm{X}^{\omega}_k-\pi_i(\bm{X}^{\dagger}_k)$ and $\tilde{\bm{U}}_k\equiv \bm{U}^{\omega}_k-\pi_i(\bm{U}^{\dagger}_k)$ with $\pi:\mathcal{K}^{\omega}\to\mathcal{K}^\dagger$ being a map for the state and the control from the LL segment $k^{\omega} \in \mathcal{K}^{\omega}$ to the nearest UL segment $k^\dagger \in \mathcal{K}^\dagger$.
As described in Table \ref{tab:control_parameters}, the matrices $Q, Q_f \succeq 0$ are required to be positive semi-definite, while $R \succ 0$ is required to be positive definite.

The following constraints define the feasible region of the transcribed single-level NLP for both the leader and the follower. 
Specifically, we enforce initial state conditions (Eq.~\eqref{eq:nlp_initial_conditions}), continuity constraints (Eq.~\eqref{eq:nlp_continuity}), terminal goal neighborhoods (Eq.~\eqref{eq:nlp_terminal_conditions}), missed thrust constraints on missed thrust intervals $\mathcal{O}^\omega$ with free control on active thrust intervals $\mathcal{A}^\omega$. (Eq.~\eqref{eq:nlp_mte_constraints}), obstacle avoidance via state-space inequalities (Eq.~\eqref{eq:nlp_obstacles_constraints}), simple bounds on states and controls (Eqs.~\eqref{eq:nlp_domain_constraints}-\eqref{eq:nlp_control_bounds}) and the first-order optimality conditions on active thrust intervals in $\mathcal{A}^\omega$ (Eqs.~\eqref{eq:nlp_ll_stationarity}-\eqref{eq:nlp_ll_transversality}).

\paragraph{Initial State Conditions:}
Given prescribed initial reference state $\bm{x}_0^\dagger \in \mathbb{R}^{n_x}$ and the realization dependent missed thrust initiation time $\tau_1(\omega)$, we impose:
\begin{equation} \label{eq:nlp_initial_conditions}
\begin{aligned}
    \bm{c}_{\mathrm{initial}}^\dagger(\bm{X}_0^\dagger) &\equiv \bm{X}_0^\dagger - \bm{x}_0^\dagger = \bm{0}, \\
    \bm{c}_{\mathrm{initial}}^\omega(\bm{X}_0^\omega) &\equiv \bm{X}_0^\omega - \Phi\bigl(\tau_1(\omega), 0; \bm{x}_0^\dagger, \bm{U}^\dagger\bigr) = \bm{0}.
\end{aligned}
\end{equation}
where $\Phi(t_{k+1}, t_k; \bm{x}_k, \bm{U})$ denotes the state obtained by integrating the state $\bm{x}_k$ forward from time $t_k$ to $t_{k+1}$ under control input $\bm{U}$.

\paragraph{Continuity Constraints:}
For each reference segment $k^\dagger \in \mathcal{K}^\dagger$ and realization segment $k^\omega \in \mathcal{K}^\omega$, we impose:
\begin{equation} \label{eq:nlp_continuity}
\begin{aligned} 
    \bm{c}_{\mathrm{continuity},k^\dagger}^\dagger(\bm{X}^\dagger_{k^\dagger+1}, \bm{X}^\dagger_{k^\dagger}; \bm{U}^\dagger_{k^\dagger}) &\equiv \bm{X}^\dagger_{k^\dagger+1} - \Phi_{k^\dagger+1, k^\dagger}^{\bm{U}^\dagger_{k^\dagger}} \bm{X}^\dagger_{k^\dagger} = \bm{0}, \\
    \bm{c}_{\mathrm{continuity},k^\omega}^\omega(\bm{X}^\omega_{k^\omega+1}, \bm{X}^\omega_{k^\omega}; \bm{U}^\omega_{k^\omega}) &\equiv \bm{X}^\omega_{k^\omega+1} - \Phi_{k^\omega+1, k^\omega}^{\bm{U}^\omega_{k^\omega}} \bm{X}^\omega_{k^\omega} = \bm{0}.
\end{aligned}
\end{equation}
where the operator $\Phi(t_{k+1}, t_k; \bm{x}_k, \bm{U})$ described in Eq.~\eqref{eq:nlp_initial_conditions} reduces to the numerical flow map $\Phi_{k+1,k}^{\bm{U}}$ when $t_{k+1}$ and $t_k$ align with discretized segment boundaries acting on the state $\bm{x}_k$ under the control input $\bm{U}$.

\paragraph{Terminal State Conditions:}
Given prescribed terminal states $\bm{x}_{1}\in\mathbb{R}^{n_x}$, we impose:
\begin{equation} \label{eq:nlp_terminal_conditions}
\begin{aligned}
    \bm{c}_{\mathrm{terminal}}^\dagger\bigl(\bm{X}^\dagger_1\bigr) &\equiv \bm{X}^\dagger_{1}-\bm{x}_{1}=\bm{0}, \\
    \bm{c}_{\mathrm{terminal}}^\omega\bigl(\bm{X}^\omega_1\bigr) &\equiv \bm{X}^\omega_{1}-\bm{x}_{1}=\bm{0}.
\end{aligned}
\end{equation}

\paragraph{Missed Thrust Constraints In $\mathcal{O}^\omega$:}
Recall that $\mathcal{O}^\omega \subset [0,1]$ denotes the time interval during which the MTE occurs for realization $\omega$.  
We define $\mathcal{M}^\omega \subseteq \mathcal{K}^\omega$ as the index set of all discrete segments $k^\omega \in \mathcal{K}^\omega$ whose time domain intersects with $\mathcal{O}^\omega$, i.e.,
\begin{equation} \label{eq:nlp_m_omega}
    \mathcal{M}^\omega \equiv \left\{ k^\omega \in \mathcal{K}^\omega \middle| [t_{k^\omega}, t_{k^\omega+1}] \cap \mathcal{O}^\omega \neq \emptyset \right\},
\end{equation}
On the set $\mathcal{M}^\omega$, the spacecraft has no control authority i.e.,
\begin{equation} \label{eq:nlp_mte_constraints}
    \bm{c}_{\mathrm{mte}}^\omega \bigl(\bm{U}^\omega_{k^\omega}\bigr) \equiv \bm{U}^\omega_{k^\omega} = \bm{0}, \qquad \forall k^\omega \in \mathcal{M}^\omega.
\end{equation}
On the complement set $\bar{\mathcal{M}}^\omega \equiv \mathcal{K}^\omega \setminus \mathcal{M}^\omega$, thrust controls $\bm{U}^\omega_k$ remain decision variables, subject to the control bounds described in Eq.~\eqref{eq:nlp_control_bounds}.

\paragraph{Obstacle Avoidance:}
For each spherical obstacle $o_j\in\mathcal{O}$ with $(\bm{r}_{o_j/O},R_j)\in\mathbb{R}^{n_p}\times\mathbb{R}_+$ describing the object position $\bm{r}_{o_j/O}$ and the object radius $R_j$, we impose:
\begin{equation} \label{eq:nlp_obstacles_constraints}
\begin{aligned}
    \bm{c}_{\mathrm{obstacle},k^\dagger,j}^\dagger\bigl(\bm{X}^\dagger_{k^\dagger}\bigr) &\equiv R_j - \left\|\mathrm{P}(\bm{X}^\dagger_{k^\dagger})-\bm{r}_{j/O}^o\right\|_2 \le 0 \qquad \forall k^\dagger\in\mathcal{K}^\dagger, \\
    \bm{c}_{\mathrm{obstacle},k^\omega,j}^\omega\bigl(\bm{X}^\omega_{k^\omega}\bigr) &\equiv R_j - \left\|\mathrm{P}(\bm{X}^\omega_k)-\bm{r}_{j/O}^o\right\|_2 \le 0 \qquad \forall k^\omega\in\mathcal{K}^\omega.
\end{aligned}
\end{equation}
where $\mathrm{P}(\bm{X})$ simply extracts the position components from the state $\bm{X}$.

\paragraph{Domain Bounds:}
Let $\underline{\bm{x}}_k,\overline{\bm{x}}_k\in\mathbb{R}^{n_x}$, where $\underline{\bm{x}}_k$ and $\overline{\bm{x}}_k$ denotes the lower and the upper bounds on the state components in $\bm{x}_k$ respectively.
We impose:
\begin{equation} \label{eq:nlp_domain_constraints}
    \begin{aligned}
        \bm{c}_{\mathrm{domain}+}^\dagger\bigl(\bm{X}^\dagger_{k^\dagger}\bigr) &\equiv \underline{\bm{x}}_{k^\dagger} - \bm{X}^\dagger_{k^\dagger} \le \bm{0} \quad \forall k^\dagger\in\mathcal{K}^\dagger, \\
        \bm{c}_{\mathrm{domain}-}^\dagger\bigl(\bm{X}^\dagger_{k^\dagger}\bigr) &\equiv \bm{X}^\dagger_{k^\dagger}-\overline{\bm{x}}_{k^\dagger}^\dagger \le \bm{0} \quad \forall k^\dagger\in\mathcal{K}^\dagger, \\
        \bm{c}_{\mathrm{domain}+}^\omega\bigl(\bm{X}^\omega_{k^\omega}\bigr) &\equiv \underline{\bm{x}}_{k^\omega} - \bm{X}^\omega_{k^\omega} \le \bm{0} \quad \forall k^\omega\in\mathcal{K}^\omega, \\
        \bm{c}_{\mathrm{domain}-}^\omega\bigl(\bm{X}^\omega_{k^\omega}\bigr) &\equiv \bm{X}^\omega_{k^\omega}-\overline{\bm{x}}_{k^\omega}^\omega \le \bm{0} \quad \forall k^\omega\in\mathcal{K}^\omega.
    \end{aligned}
\end{equation}

\paragraph{Control Bounds:}
Let $\underline{\bm{u}}_k,\overline{\bm{u}}_k\in\mathbb{R}^{n_u}$, where $\underline{\bm{u}}_k$ and $\overline{\bm{u}}_k$ denotes the lower and the upper bounds on the control components in $\bm{u}_k$ respectively.
We impose:
\begin{equation} \label{eq:nlp_control_bounds}
    \begin{aligned}
        \bm{c}_{\mathrm{control}+}^\dagger\bigl(\bm{u}^\dagger_{k^\dagger}\bigr) &\equiv \underline{\bm{u}}_{k^\dagger} - \bm{U}^\dagger_{k^\dagger} \le \bm{0} \quad \forall k^\dagger\in\mathcal{K}^\dagger, \\
        \bm{c}_{\mathrm{control}-}^\dagger\bigl(\bm{U}^\dagger_{k^\dagger}\bigr) &\equiv \bm{U}^\dagger_{k^\dagger}-\overline{\bm{u}}_{k^\dagger}^\dagger \le \bm{0} \quad \forall k^\dagger\in\mathcal{K}^\dagger, \\
        \bm{c}_{\mathrm{control}+}^\omega\bigl(\bm{U}^\omega_{k^\omega}\bigr) &\equiv \underline{\bm{u}}_{k^\omega} - \bm{U}^\omega_{k^\omega} \le \bm{0} \quad \forall k^\omega\in\mathcal{K}^\omega, \\
        \bm{c}_{\mathrm{control}-}^\omega\bigl(\bm{U}^\omega_{k^\omega}\bigr) &\equiv \bm{U}^\omega_{k^\omega}-\overline{\bm{u}}_{k^\omega}^\omega \le \bm{0} \quad \forall k^\omega\in\mathcal{K}^\omega.
    \end{aligned}
\end{equation}

\paragraph{Optimality Conditions In $\mathcal{A}^\omega$:}
The first-order optimality conditions associated with the LL sub-problem apply only on the active thrust segments, i.e., the index set $\bar{\mathcal{M}}^\omega \equiv \mathcal{K}^\omega \setminus \mathcal{M}^\omega$.  
For each $k^\omega \in \bar{\mathcal{M}}^\omega$, the stationarity, costate, and terminal conditions are given by:
\begin{subequations} \label{eq:disc_LQR_KKT}
    \begin{align}
        R \bm{U}^\omega_k + (B_k^\omega)^\top \bm{\Lambda}^\omega_k &= \bm{0}, 
        &&\text{(stationarity)}, \label{eq:nlp_ll_stationarity} \\
        -\bm{\Lambda}^\omega_{k+1} + \Psi_k \bm{\Lambda}^\omega_k + \Gamma_k \tilde{\bm{X}}^\omega_k &= \bm{0}, 
        &&\text{(costate recursion)}, \label{eq:nlp_ll_recursion} \\
        \bm{\Lambda}^\omega_{N^\omega} - 2 Q_f \tilde{\bm{X}}^\omega_{N^\omega} &= \bm{0},
        &&\text{(transversality)}. \label{eq:nlp_ll_transversality}
    \end{align}
\end{subequations}
where $\Psi_k$ and $\Gamma_k$ denote the discrete adjoint transition and forcing matrices obtained by integrating the continuous-time costate dynamics over segment $k$ using the same numerical scheme as for the state dynamics.
For example, if one is using a forward Euler scheme, Eq.~\eqref{eq:nlp_ll_recursion} would simply reduce to:
\begin{equation} \label{eq:nlp_ll_recursion_reduced}
    -\bm{\Lambda}^\omega_{k+1} + \bigl(I - \Delta t A_k^\top\bigr)\bm{\Lambda}^\omega_k - \Delta t Q = \bm{0}.
\end{equation}
where $\Delta t$ represents the time discretization for the numerical integration scheme.
On the missed thrust segments, i.e., for all $k^\omega \in \mathcal{M}^\omega$, the constraint given by Eq.~\eqref{eq:nlp_mte_constraints} enforces $\bm{U}^\omega_{k^\omega} = \bm{0} \ \forall \ k^\omega \in \mathcal{M}^\omega$, and the stationarity condition~\eqref{eq:nlp_ll_stationarity} is therefore omitted.

Collecting all individual constraints, the single-level NLP can be simply written as: 
\begin{equation} \label{eq:single_level_NLP} 
    \begin{aligned} 
        \min_{t^i_f, \bm{X}^i_k,\bm{U}^i_k, \bm{\Lambda}^i_k \forall i>0} \quad & J^{\dagger} \\ 
        \text{s.t.} \quad 
        & \text{Eq.~\eqref{eq:nlp_initial_conditions}}, \\ 
        & \text{Eq.~\eqref{eq:nlp_continuity}}, \\ 
        & \text{Eq.~\eqref{eq:nlp_mte_constraints}}, \\ 
        & \text{Eq.~\eqref{eq:nlp_terminal_conditions}}, \\ 
        & \text{Eq.~\eqref{eq:nlp_obstacles_constraints}}, \\ 
        & \text{Eq.~\eqref{eq:nlp_domain_constraints}}, \\ 
        & \text{Eq.~\eqref{eq:nlp_control_bounds}}, \\ 
        & \text{Eq.~\eqref{eq:nlp_ll_stationarity},Eq.~\eqref{eq:nlp_ll_recursion}, Eq.~\eqref{eq:nlp_ll_transversality}}. 
    \end{aligned} 
\end{equation}
All derivatives are computed using automatic differentiation (AD) with \texttt{JAX}~\cite{jax_2018_github}. 
The breadth of the imposed constraints illustrates the generality of the proposed numerical framework and its flexibility in incorporating additional constraints when required.

\subsection{Constraint Jacobian} \label{subsec:experimental results relative spacecraft motion:constraint jacobian}

The nonlinear program in Eq.~\eqref{eq:single_level_NLP} consists of a collection of equality and inequality constraints $c_j(\bm{z}) = 0$ and $c_j(\bm{z}) \le 0$ defined over the global decision vector:
\begin{equation}
    \bm{z} \equiv \bigl(T^\dagger, \bm{X}^\dagger_k, \bm{U}^\dagger_k, T^\omega, \bm{X}^\omega_k, \bm{U}^\omega_k, \bm{\Lambda}^\omega_k\bigr),
\end{equation}
where the index $k$ ranges over the reference and realization segment sets $\mathcal{K}^\dagger$ and $\mathcal{K}^\omega$.  
The Jacobian of the constraints $\bm{J}$ with respect to the decision variables $\bm{z}$ is the matrix:
\begin{equation}
    \bm{J}(\bm{z}) \equiv \nabla_{\bm{z}} \bm{c}(\bm{z}) =
    \begin{bmatrix}
        \nabla_{\bm{z}} c_1(\bm{z})^\top \\
        \vdots \\
        \nabla_{\bm{z}} c_m(\bm{z})^\top
    \end{bmatrix},
\end{equation}
which encodes the first-order sensitivities of every constraint with respect to every decision variable.
This matrix plays a central role in large constrained optimization e.g., Newton-type SQP and interior-point methods require repeated evaluation of $\bm{J}(\bm{z})$ and its associated linear systems, making its structure a key determinant of computational efficiency.
Because the transcription is constructed using forward-backward multiple shooting, each constraint depends only on variables from a small neighboring set of segments.  
Consequently, $\bm{J}(\bm{z})$ is block sparse and exhibits a banded pattern whose width is dictated by the integrator and by the coupling between reference and realization trajectories.
Each constraint class contributes a distinct structured block, and the resulting sparsity pattern is shown in Fig.~\ref{fig:jacobian_sparsity}.
This structure is exploited directly by sparse linear solvers inside the NLP algorithm, enabling memory efficient and scalable solution of the single-level formulation.
\ifthenelse{\boolean{includefigures}}
{
    \begin{figure}[!htb]
        \centering
        \begin{tikzonimage}[keepaspectratio, width=0.5\linewidth]{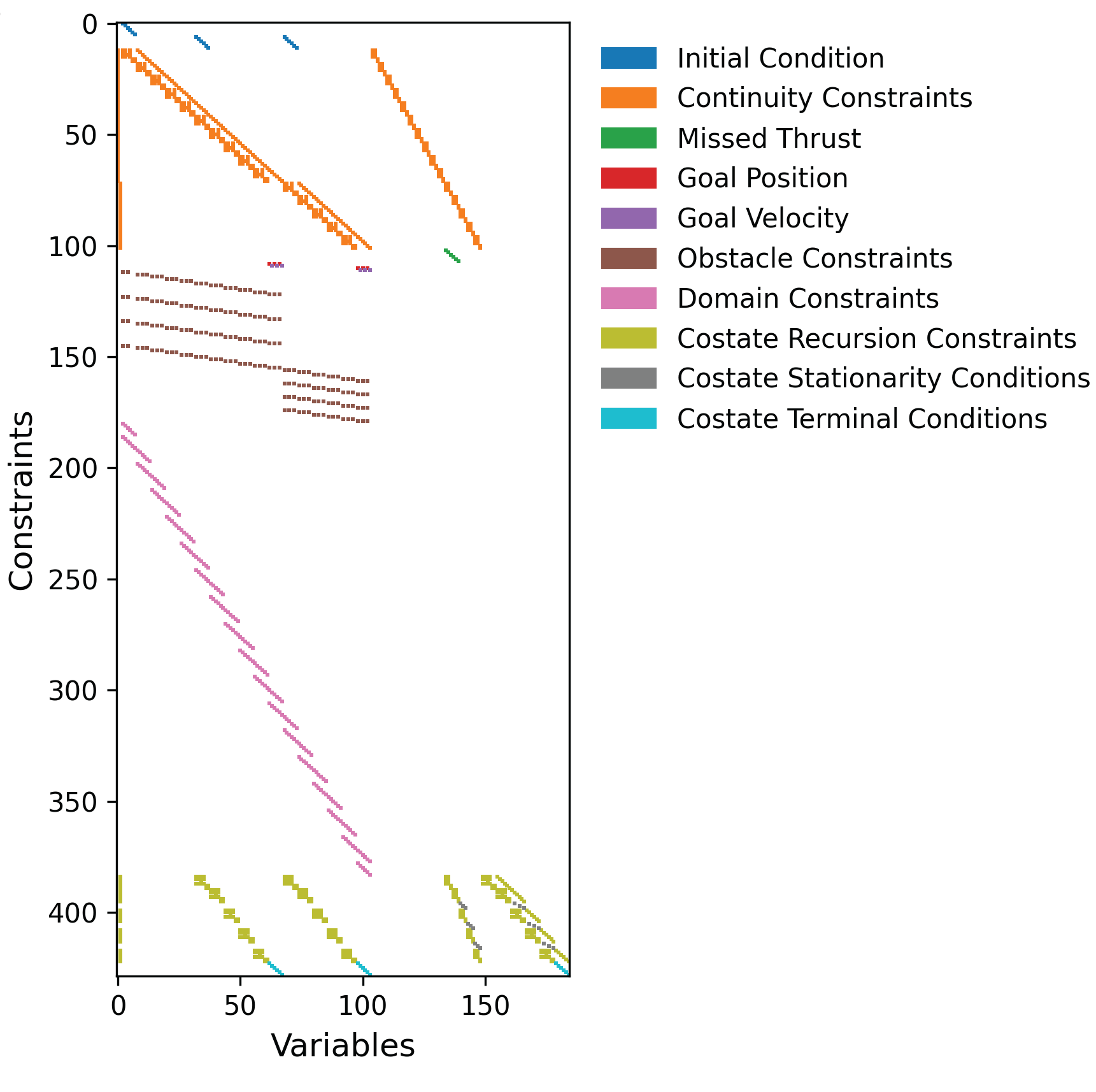} 
            \small
            \node[fill=white, opacity=1.0, text opacity=1, anchor=south, rotate=90] at (0.035,0.55) {Constraints};
            \node[fill=white, opacity=1.0, text opacity=1, anchor=south] at (0.30,0.0125) {Variables};
        \end{tikzonimage}
        \caption{Sparsity pattern of the Jacobian $\bm{J}(\bm{z})$, showing the block structure induced by multiple shooting and the decomposition of the constraints.}
        \label{fig:jacobian_sparsity}
    \end{figure}
}
{
}

Although all derivatives are computed using exact reverse-mode AD, we can easily validate the correctness of the Jacobian $\bm{J}(\bm{z})$ by comparing it against an independent finite-difference (FD) approximation. 
Such a comparison verifies that the AD pipeline correctly propagates derivatives through the dynamics and also serves as a safeguard against implementation errors that AD would otherwise differentiate exactly. 
Figure~\ref{fig:jacobian_sparsity_ad_vs_fd} shows the value of the derivatives obtained via AD as well as the pointwise relative difference $e_{\mathrm{rel}}$ between the AD Jacobian and a central-difference FD approximation with step size $\epsilon = 10^{-6}$ where,
\begin{equation}
    e_{\mathrm{rel}} \equiv \frac{|\bm{J}_{\mathrm{AD}} - \bm{J}_{\mathrm{FD}}|}{1 + |\bm{J}_{\mathrm{FD}}|}
\end{equation}
The observed discrepancies lie within the expected truncation error envelope for second order central differences on the order of $\mathcal{O}(\epsilon^2) \approx 10^{-12}$, with rounding effects up to $\approx 10^{-10}$, demonstrating numerical agreement and confirming the correctness of the Jacobian evaluation via AD.

\ifthenelse{\boolean{includefigures}}
{
    \begin{figure}[!htb]
        \centering
        \begin{subfigure}[t]{0.45\textwidth}
            \centering
            \includegraphics[keepaspectratio, width=\textwidth]{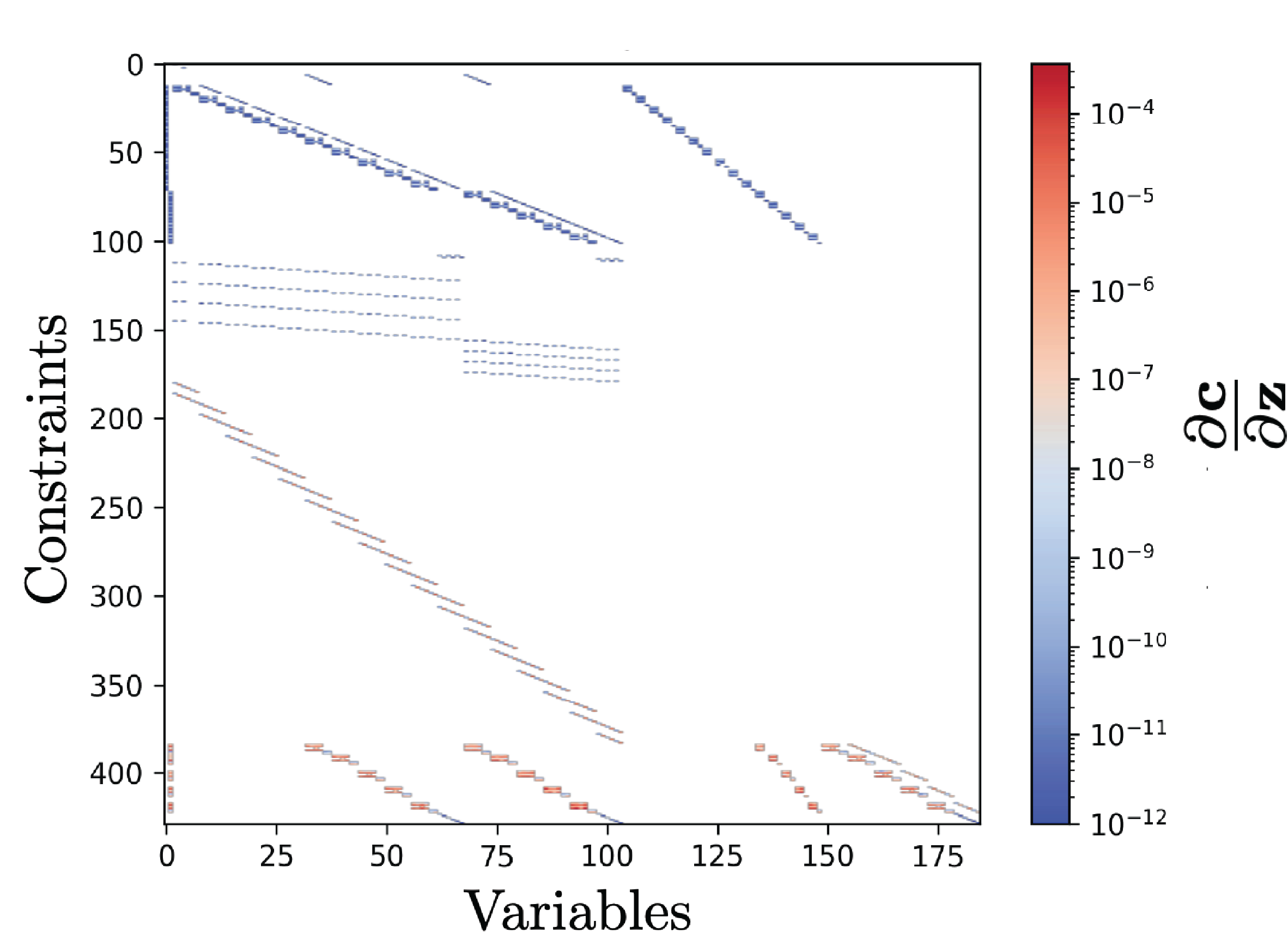}
            \caption{
            Absolute values $|[\bm{J}_{\mathrm{AD}}]_{i,j}| \equiv |\frac{\partial{c_j}}{\partial{z}_i}|$.
            }
            \label{fig:jacobian_sparsity_ad}
        \end{subfigure}
        \hfill
        \begin{subfigure}[t]{0.42\textwidth}
            \centering
            \includegraphics[keepaspectratio, width=\textwidth]{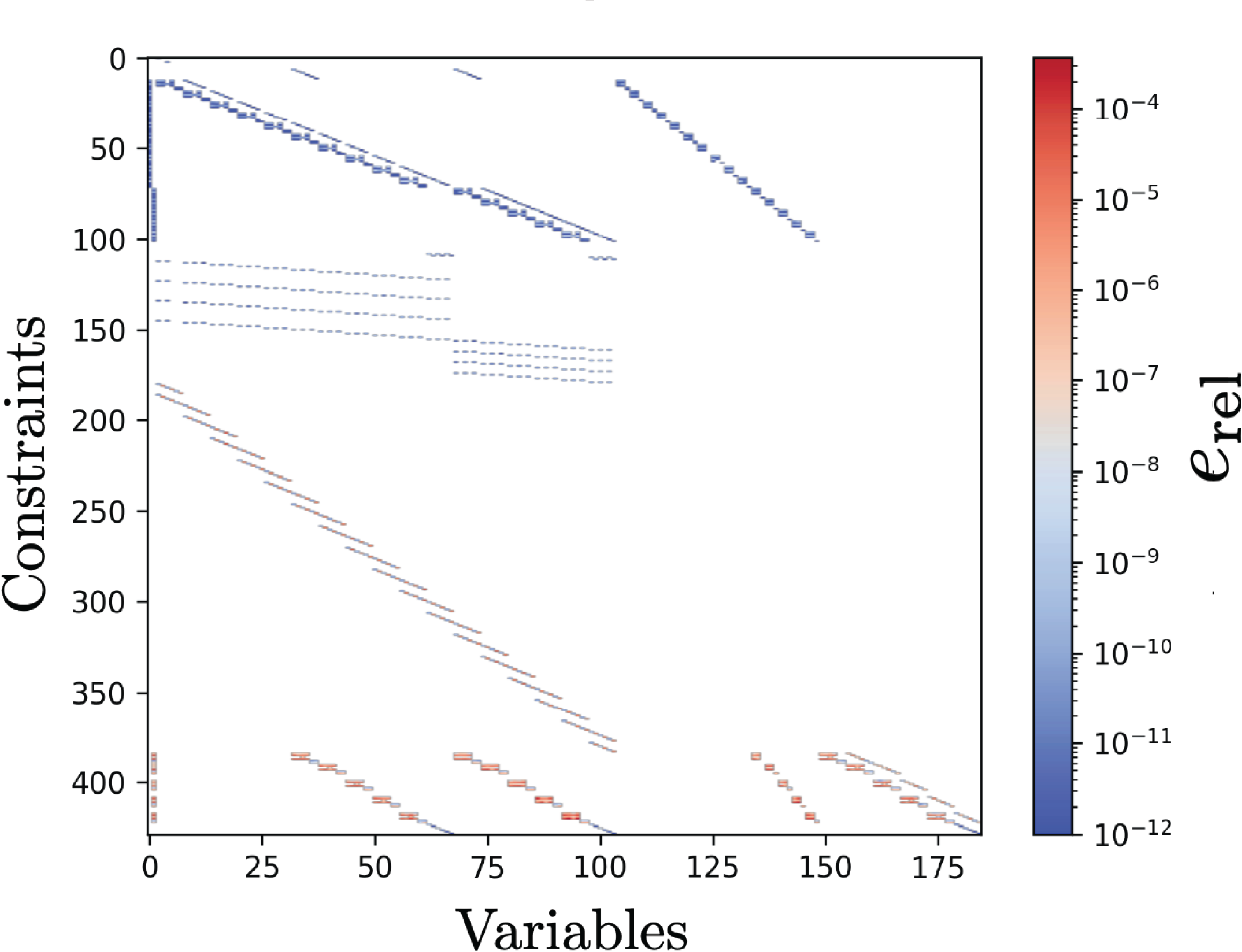}
            \caption{
            Pointwise relative difference $e_{\mathrm{rel}} \equiv \frac{|\bm{J}_{\mathrm{AD}} - \bm{J}_{\mathrm{FD}}|}{1 + |\bm{J}_{\mathrm{FD}}|}$
            }
            \label{fig:jacobian_sparsity_difference}
        \end{subfigure}
        \caption{
        Validation of the analytic Jacobian $\bm{J}_{\mathrm{AD}}$ against the finite-difference approximation $\bm{J}_{\mathrm{FD}}(\epsilon = 10^{-6})$ evaluated at a representative decision variable vector $\bm{z}$ in the circular case.
        }
        \label{fig:jacobian_sparsity_ad_vs_fd}
    \end{figure}
}
{
}

\subsection{Hessian Bounds} \label{subsec:experimental results relative spacecraft motion:hessian bounds}

To bound the second order remainder term appearing in the Taylor expansion of the dynamics, we require a uniform upper bound on the Hessian of the nonlinear vector field over a tubular neighborhood $\mathcal T_{\rho}$ around the reference solution.  
Recall that the controlled dynamics take the form:
\begin{equation}
    \dot{\xi}_t = \tilde{f}(\xi_t, u_t),
\end{equation}
with $\xi_t \in \mathbb{R}^n$.  
For any fixed control input $u_t$, the Hessian of the dynamics with respect to the state is the third order tensor
\begin{equation}
    \nabla_{\xi}^2 \tilde{f}(\xi_t,u_t) = \left[\frac{\partial^2 \tilde{f}_i}{\partial \xi_j \partial \xi_k}(\xi_t,u_t)\right]_{ijk},
\end{equation}
which acts on vectors through the bilinear form
\begin{equation}
    \nabla_{\xi}^2 \tilde{f}(\xi_t,u_t)[v,w] = \sum_{j,k=1}^n\frac{\partial^2 \tilde{f}}{\partial \xi_j  \partial \xi_k}(\xi_t,u_t)v_j w_k.
\end{equation}
By Taylor's theorem in mean-value form (see Appendix ~\ref{app:taylor_expansion}) the deviation $\tilde{\xi}_t = \xi_t^\omega - \xi_t^\dagger$ satisfies
\begin{equation}
    r(\tilde{\xi}_t,t) = \frac{1}{2} \nabla_{\xi}^2 \tilde{f}(\xi_t^\ddagger, u_t^\dagger)[\tilde{\xi}_t, \tilde{\xi}_t],
\qquad
\xi_t^\ddagger = \xi_t^\dagger + \theta \tilde{\xi}_t \ \text{for an unknown } \theta\in(0,1).
\end{equation}
Thus, a sufficient condition for the remainder to be uniformly bounded on a tube
\begin{equation}
    \mathcal{T}_\rho \equiv \{\xi : \|\xi - \xi_t^\dagger\|\le \rho\},
\end{equation}
is the existence of a constant $H(\rho)$ such that
\begin{equation}
    \bigl\|\nabla_{\xi}^2 \tilde{f}(\xi_t^\ddagger,u_t^\dagger)\bigr\| \le H(\rho), \qquad \forall \xi_t^\ddagger \in \mathcal{T}_\rho.
\end{equation}
To quantify $\|\nabla^2_{\xi} \tilde{f}\|$, we evaluate the Hessian tensor componentwise and reshape it into the corresponding $n\times n$ linear map acting as $v\mapsto \nabla_{\xi}^2 \tilde{f}(\xi^\dagger_t,u_t^\dagger)[v,v]$.  
We report three standard matrix norms for a matrix $\bm{H}$:
\begin{subequations} \label{eq:hessian_norms}
    \begin{align}
        \|\bm{H}\|_{\mathrm{oper}} &\equiv \sup_{\|v\|_2=1} \|\bm{H}v\|_2 &&\text{(operator norm)}, \\
        \|\bm{H}\|_{\mathrm{frob}} &\equiv \sqrt{\sum_{i,j} H_{ij}^2} &&\text{(Frobenius norm)}, \\
        \|\bm{H}\|_{\mathrm{spec}} &\equiv \max\{|\lambda|:\lambda\in\mathrm{eig}(\bm{H})\} &&\text{(spectral radius)}.
    \end{align}
\end{subequations}
All matrix norms on $\mathbb{R}^{n\times n}$ are equivalent but their numerical values can differ substantially due to problem dependent scaling of the underlying coordinates. 
In poorly scaled systems, such as those with heterogeneous units, anisotropic dynamics, or large disparities in sensitivity along different state directions, a particular norm may either exaggerate or mask curvature effects. 
Examining several norms is therefore informative: the operator norm captures the worst-case amplification direction and is most relevant for assessing the local stability of the linearization; the Frobenius norm reflects an average measure of curvature and is less sensitive to isolated large entries; and the spectral radius emphasizes the dominant eigenmode of the Hessian and therefore highlights the principal direction of nonlinear distortion. 
For example, in spacecraft relative motion, position variables may be expressed in kilometers while velocity variables appear in kilometers per second. 
The Hessian entries associated with the velocity components can therefore be several orders of magnitude smaller than those associated with position, even if the underlying nonlinear coupling is dynamically significant. 
In such a case, the Frobenius norm may substantially underrepresent curvature because the many small velocity-related entries dilute the effect of a few large position-related entries, whereas the operator norm or spectral radius more accurately reflects the dominant direction of nonlinear growth. 
Conversely, if the curvature is distributed broadly across many small terms, the Frobenius norm may provide a more faithful measure than the operator norm. 
Considering all three norms together provides a more complete and scale-aware characterization of the local curvature of the dynamics, even though the theoretical bound requires only the existence of a uniform constant $H$ rather than a specific choice of norm.

To assess how tightly the Hessian at the reference state approximates the maximal curvature within a small tube, we evaluate the three norms above over a $[-5,5]$ km $\times$ $[-5,5]$ km domain in the $(x,y)$-plane centered at $\xi^\dagger_t$.  
Figure~\ref{fig:hessian_comparison_third_order} shows the resulting fields where the position of the target spacecraft $O$ is indicated by the \textcolor{Red}{red} star and the initial condition of the chaser spacecraft $\bm{\hat{\xi}}_{P/\mathcal{O}}^{\mathcal{H}}$ is denoted by the \textcolor{DodgerBlue}{blue} circle.  
All three norms exhibit smooth but minimal variations across the domain.
There also appears to be negligible differences between the three norms considered, which further indicates minimal nonlinear effects for this particular problem.
\ifthenelse{\boolean{includefigures}}
{
    \begin{figure}[!htb]
        \centering
        \begin{tikzonimage}[keepaspectratio, width=\linewidth]{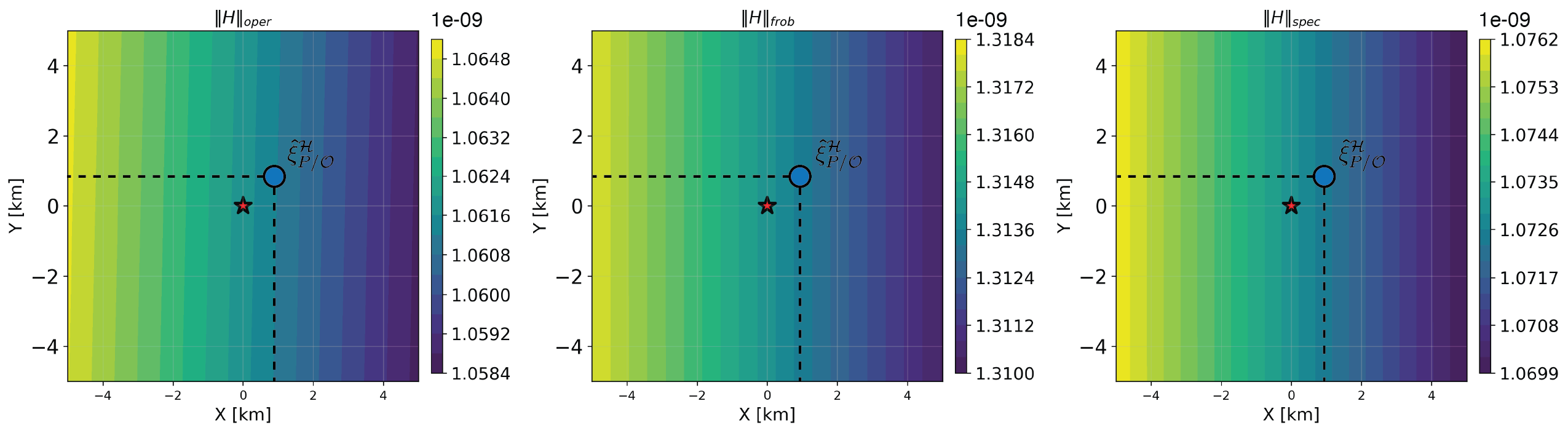}
            \small
            \node[fill=white, opacity=1.0, text opacity=1, anchor=south, rotate=90] at (0.025,0.5) {Y [km]};
            \node[fill=white, opacity=1.0, text opacity=1, anchor=south, rotate=90] at (0.36,0.5) {Y [km]};
            \node[fill=white, opacity=1.0, text opacity=1, anchor=south, rotate=90] at (0.70,0.5) {Y [km]};
            \node[fill=white, opacity=1.0, text opacity=1, anchor=south] at (0.16,-0.025) {X [km]};
            \node[fill=white, opacity=1.0, text opacity=1, anchor=south] at (0.49,-0.025) {X [km]};
            \node[fill=white, opacity=1.0, text opacity=1, anchor=south] at (0.83,-0.025) {X [km]};
            \node[fill=white, opacity=1.0, text opacity=1, anchor=south] at (0.16,0.93) {$\|\bm{H}\|_{\mathrm{oper}}$};
            \node[fill=white, opacity=1.0, text opacity=1, anchor=south] at (0.49,0.93) {$\|\bm{H}\|_{\mathrm{frob}}$};
            \node[fill=white, opacity=1.0, text opacity=1, anchor=south] at (0.83,0.93) {$\|\bm{H}\|_{\mathrm{spec}}$};
        \end{tikzonimage}
        \caption{Spatial distribution of Hessian norms (operator $\|\bm{H}\|_{\mathrm{oper}}$, frobenius $\|\bm{H}\|_{\mathrm{frob}}$, and spectral $\|\bm{H}\|_{\mathrm{spec}}$) for the nonlinear dynamics in the circular case, evaluated on a $[-5,5]$ km domain centered at the reference state (indicated by the \textcolor{Red}{red} star).}
        \label{fig:hessian_comparison_third_order}
    \end{figure}
}
{
}
To further illustrate this behavior, Fig.~\ref{fig:hessian_norm_crosssection} presents cross-sections of the same fields along the $x$-axis (Fig.~\ref{fig:hessian_norm_crosssection_x}) and $y$-axis (Fig.~\ref{fig:hessian_norm_crosssection_y}).  
Across both directions, the three norms vary only minimally, and the value at the reference state again closely matches the local maxima and minima.  
These observations support the use of the Hessian bound evaluated at the reference state $\xi_t^\dagger$ as a sufficient approximation of the conservative bound representative of nearby states in the tube $\mathcal{T}_\rho$.
\ifthenelse{\boolean{includefigures}}
{
    \begin{figure}[!htb]
        \centering
        \begin{subfigure}[t]{0.45\textwidth}
            \centering
            \includegraphics[keepaspectratio, width=\textwidth]{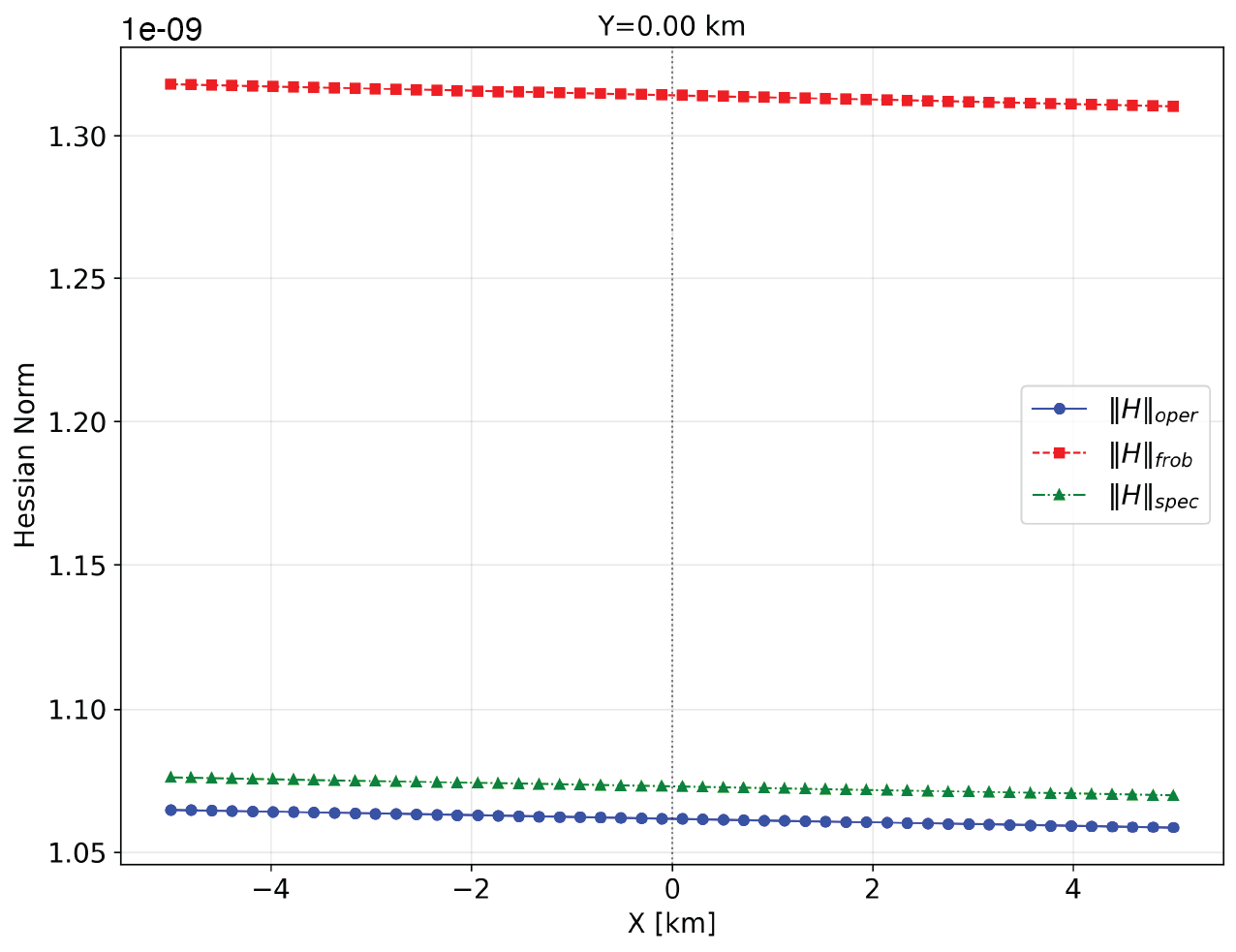}
            \caption{
                Cross-section at $Y=0.0$ km
            }
            \label{fig:hessian_norm_crosssection_y}
        \end{subfigure}
        \hfill
        \begin{subfigure}[t]{0.45\textwidth}
            \centering
            \includegraphics[keepaspectratio, width=\textwidth]{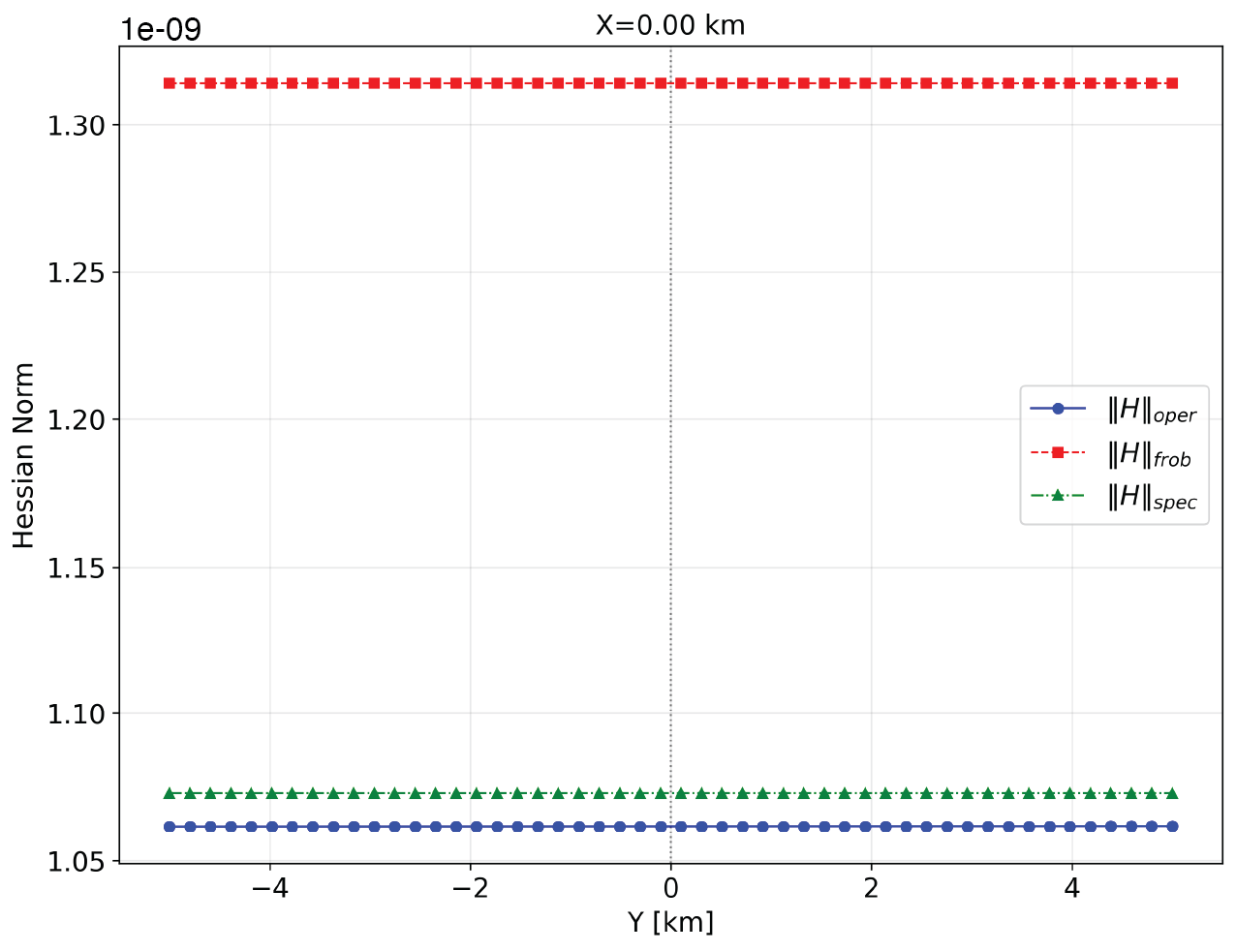}
            \caption{
                Cross-section at $X=0.0$ km
            }
            \label{fig:hessian_norm_crosssection_x}
        \end{subfigure}
        \caption{
        Cross-sections of the Hessian norm shown in Fig.~\ref{fig:hessian_comparison_third_order}, taken along the $x$ and $y$ axes through the reference state.
        }
        \label{fig:hessian_norm_crosssection}
    \end{figure}
}
{
}

In addition to characterizing curvature within a fixed tube around the reference state, it is useful to examine how nonlinear effects strengthen as the target orbit departs from circularity. 
To this end, we compute, for each eccentricity $e \in [0,0.9]$, the maximum operator norm $\|\bm{H}\|_{\mathrm{oper}}$ over the full $[-5,5]~\text{km} \times [-5,5]~\text{km}$ planar domain around the target spacecraft and over all true anomalies $\nu \in [0,2\pi]$. 
For each eccentricity, the semimajor axis is adjusted so that the periapse altitude remains identical to the orbital altitude in the circular case.
The resulting trend is shown in Fig.~\ref{fig:max-operator-norm-eccentricity}.
The two specific cases for the eccentric orbit are highlighted in Fig.~\ref{fig:hessian_norm_crosssection_y}.
We observe that the maximum curvature occurs near periapse, while the minimum curvature occurs near apoapse (Fig.~\ref{fig:max-operator-norm-fixed_sma}).
Furthermore, as $e$ increases, the maximum curvature remains constant since the periapse altitude remains constant and the minimal curvature decays exponentially as the apoapse altitude increases.

\ifthenelse{\boolean{includefigures}}
{
    \begin{figure}[!htb]
        \centering
        \begin{subfigure}[b]{0.625\textwidth}
            \centering
            \includegraphics[keepaspectratio, width=\textwidth]{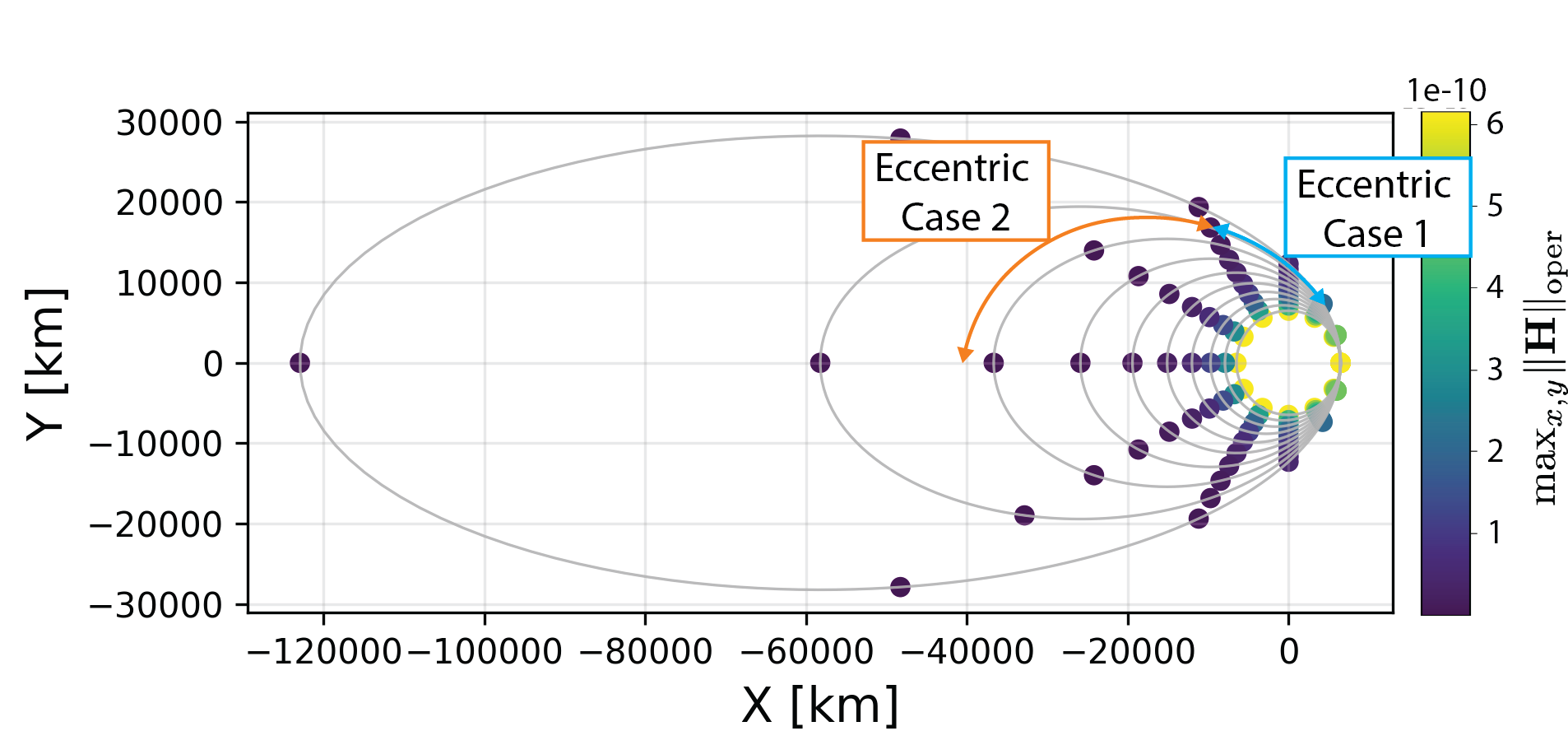}
            \caption{
                Family of eccentric target orbits sharing a common periapse altitude, with discrete $\nu$ samples colored by $\max_{x, y} \|\bm{H}\|_{\mathrm{oper}}$.
            }
            \label{fig:max-operator-norm-varying_sma}
        \end{subfigure}
        \hfill
        \begin{subfigure}[b]{0.35\textwidth}
            \centering
            \includegraphics[keepaspectratio, width=\textwidth]{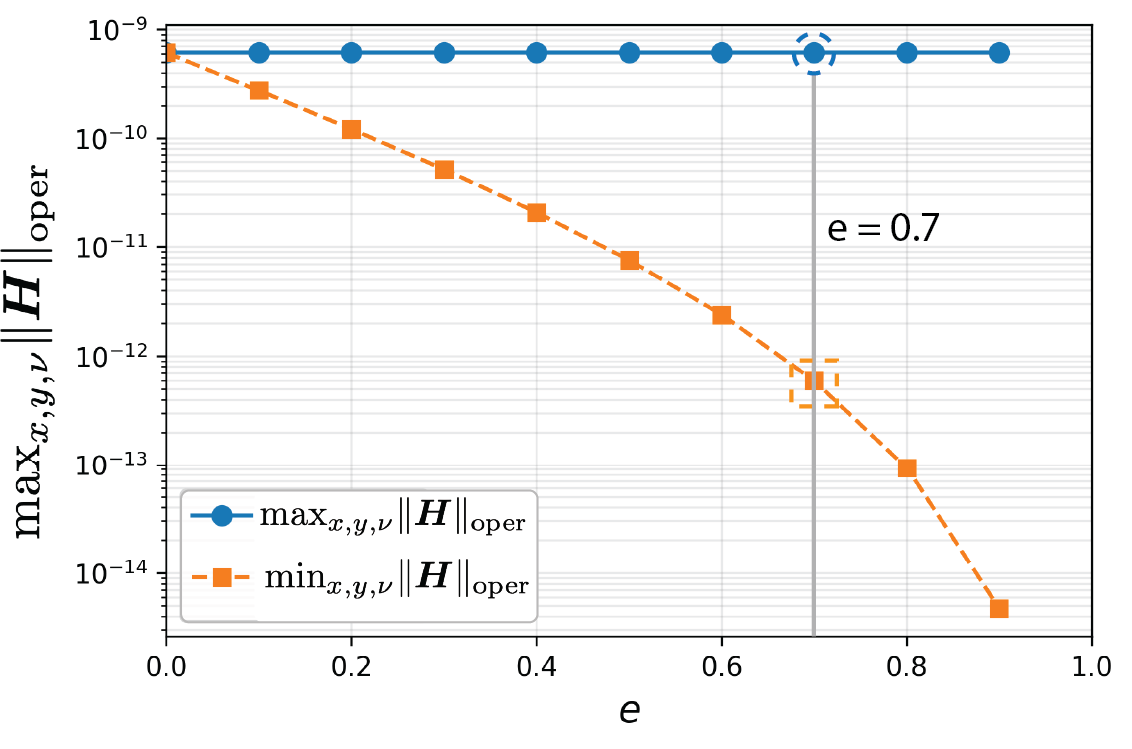}
            \caption{
                Minimum and maximum values of $\|\bm{H}\|_{\mathrm{oper}}$ attained along each orbit in the same family
            }
            \label{fig:max-operator-norm-fixed_sma}
        \end{subfigure}
        \caption{
        Maximum operator norm $\max_{x,y,\nu} \| \bm{H}\|_{\mathrm{oper}}$ for eccentric target orbits. The norm is evaluated over the planar domain $x,y\in[-5,5]$ km about the reference origin and over all true anomalies $\nu\in[0,2\pi]$, for eccentricities $e\in\{0.0,\cdots,0.9\}$ with fixed periapse altitude $r_p = 500$ km.
        }
        \label{fig:max-operator-norm-eccentricity}
    \end{figure}
}
{
}

\subsection{Example Solutions} \label{subsec:experimental results relative spacecraft motion:example solutions}

In this section, we present a representative solution generated by the proposed bi-level framework to illustrate the qualitative behavior of both the reference and realization trajectories in the circular case. 
The example highlights how the follower trajectory diverges from the leader trajectory during an MTE, how the recovery maneuver steers the system back toward the goal set while satisfying obstacle avoidance and control constraints.

Figure~\ref{fig:example-trajectory-3d} shows the temporal evolution of the leader and follower trajectories. 
The leader trajectory (\textcolor{DodgerBlue}{blue}) represents the nominal solution generated under full control authority, shaped to accommodate the MTE encountered by the follower solution. 
The follower trajectory (\textcolor{Orange}{orange}) coincides with the leader until the MTE begins at the location marked in \textcolor{Red}{red}, after which it experiences a natural drift due to loss in control authority. 
The resulting deviation grows along the locally unstable directions of the nonlinear dynamics before active control resumes. 
Once control authority is restored, the follower executes continuous correction maneuvers to track the leader while avoiding the obstacles. 
The zoomed two-dimensional view on the right highlights the fine-scale differences between the trajectories near the terminal region. 
A recurring feature across the majority of solutions is a deliberate, slight overshoot by the leader immediately preceding the MTE, which serves to mitigate the impact of the subsequent actuator loss in the follower trajectory.
\ifthenelse{\boolean{includefigures}}
{
\begin{figure}[!htb]
  \centering
  \includegraphics[width=1.0\linewidth]{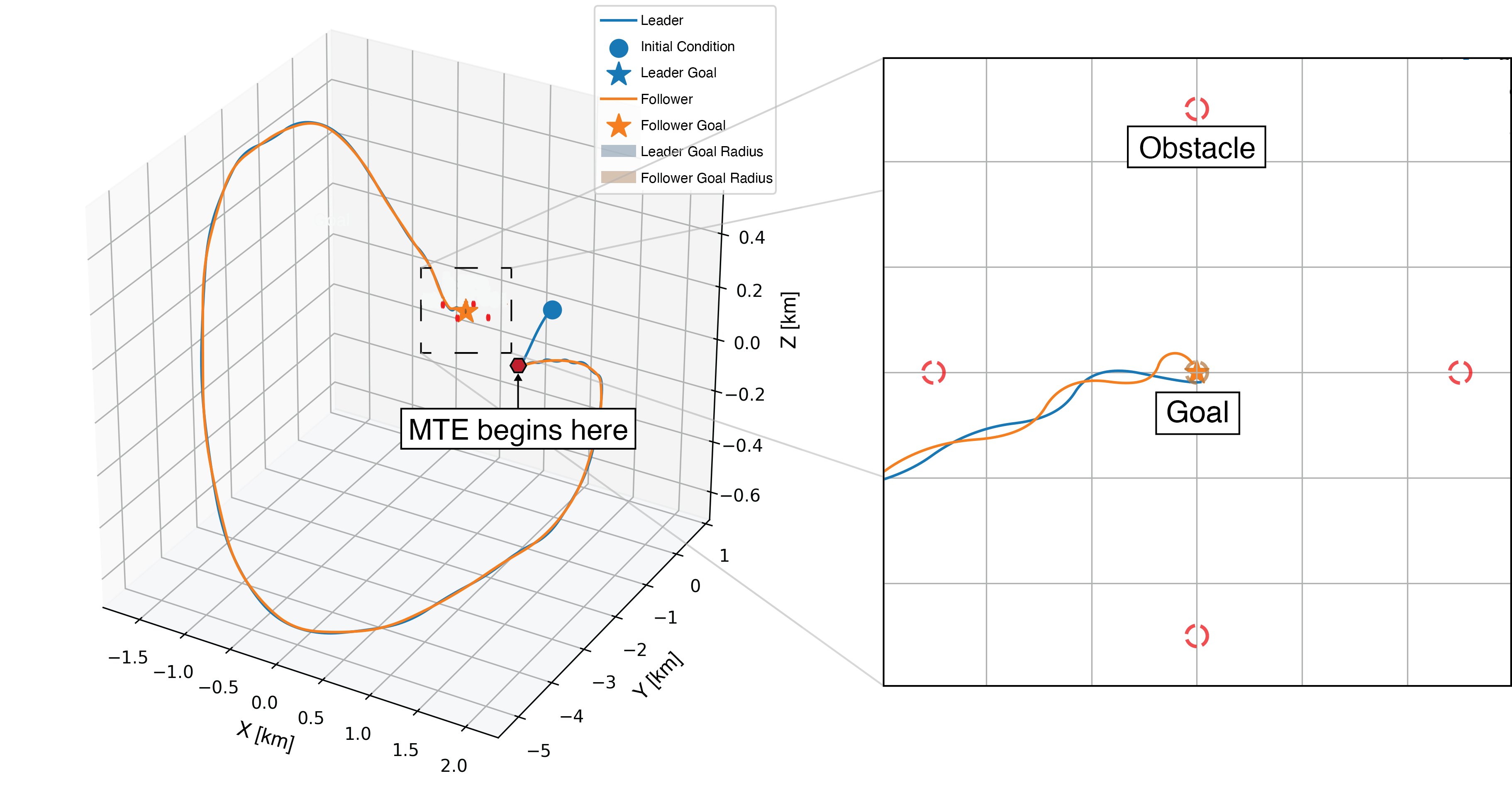}
  \caption{Leader (\textcolor{DodgerBlue}{blue}) and follower (\textcolor{Orange}{orange}) trajectories for an example MTE. The follower drifts during the MTE and subsequently performs an optimal recovery maneuver to reach the terminal goal while avoiding obstacles. The right panel shows a magnified view of the terminal region.}
  \label{fig:example-trajectory-3d}
\end{figure}
}
{
}
Figure~\ref{fig:example-throttle-profile} shows the corresponding throttle profiles for the leader (top) and follower (bottom). 
The leader throttle exhibits a piecewise-constant structure induced by the finite-burn discretization and displays an anticipatory, more energetic response immediately prior to the MTE. 
This behavior allows the leader to partially offset the impending loss of control authority in the follower. 
During the missed thrust interval of duration $\Delta\tau$, the leader correspondingly reduces its control effort, thereby mitigating the downstream impact of the outage on the coupled follower dynamics.
For the follower, the control input is identically zero throughout the missed thrust interval, after which the throttle exhibits a more energetic transient response as it compensates for the accumulated tracking error. 
Over the missed thrust interval, the minimum forcing amplitude corresponding to $\underline{u}^\dagger$ and the maximum forcing amplitude corresponding to $\overline{u}^\dagger$ are highlighted, which enter the theoretical bounds described in Theorem~\ref{thm:maximum_missed_thrust_duration}.
The follower control additionally exhibits higher-frequency variations compared to the leader, consistent with the costate-driven correction terms arising from the embedded KKT conditions in the bi-level formulation.
\ifthenelse{\boolean{includefigures}}
{
\begin{figure}[!htb]
  \centering
  \includegraphics[width=0.7\linewidth]{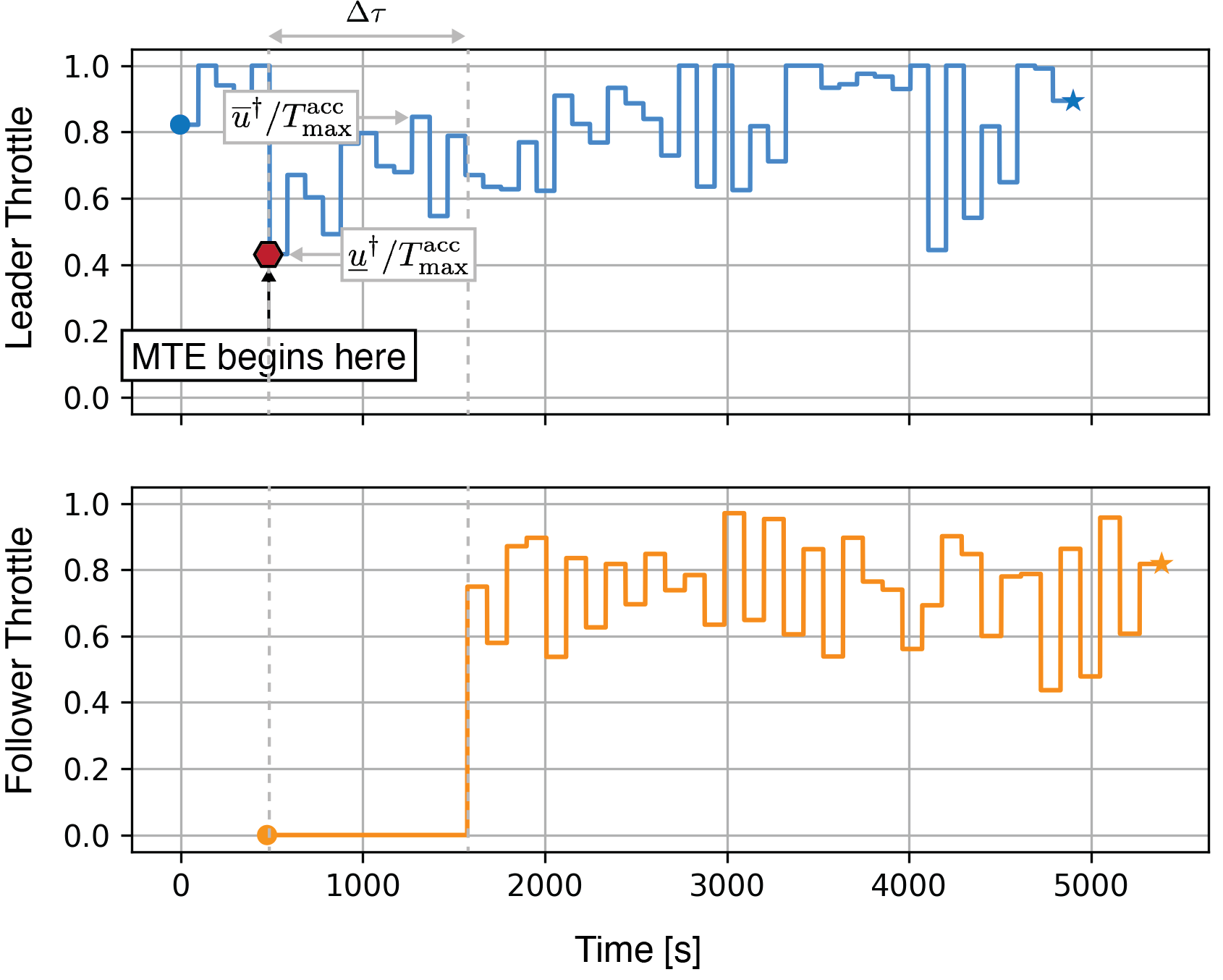}
  \caption{Leader (top) and follower (bottom) throttle profiles for the example solution in Fig.~\ref{fig:example-trajectory-3d}, showing the missed thrust interval followed by the follower's optimal recovery maneuver.}
  \label{fig:example-throttle-profile}
\end{figure}
}
{
}

\subsection{Safe State-Space Radius $\delta$ and Maximum Missed Thrust Duration $\delta\tau_{\max}$} \label{subsec:experimental results relative spacecraft motion:safe state-space radius delta and maximum missed thrust duration delta tau_max}

In this section, we assess the numerical performance of the proposed framework by applying it to three representative cases, as described in \S~\ref{subsec:experimental results relative spacecraft motion:cwh equations with third-order nonlinearities}. Table~\ref{tab:numerical_results} reports the feasibility ratios obtained for each case.
Among the three cases, the circular orbit scenario exhibits the lowest feasibility ratio, reflecting the persistently strong nonlinear effects present throughout the entire target orbit. 
For the eccentric orbit scenarios, Eccentric (Case~1), which corresponds to the target spacecraft motion closer to periapse, yields a lower feasibility ratio than Eccentric (Case~2), which corresponds to motion nearer to apoapse and is characterized by comparatively weaker nonlinearities.
\begin{table}[!htbp]
\centering
\begin{tabular}{cccc}
\hline
\textbf{Scenarios}          & \textbf{\begin{tabular}[c]{@{}c@{}}Number of \\ Initializations\end{tabular}} & \textbf{\begin{tabular}[c]{@{}c@{}}Number of \\ Solutions\end{tabular}} & \textbf{\begin{tabular}[c]{@{}c@{}}Feasibility Ratio\\ {[}\%{]}\end{tabular}} \\ \hline
\textbf{Circular}           & 500                                                                           & 240                                                                     & 48.0                                                                          \\
\textbf{Eccentric (Case 1)} & 500                                                                           & 256                                                                     & 51.2                                                                          \\
\textbf{Eccentric (Case 2)} & 500                                                                           & 272                                                                     & 54.4                                                                          \\ \hline
\end{tabular}
\caption{}
\label{tab:numerical_results}
\end{table}

Figure~\ref{fig:parameters-circular} shows the empirical distributions of the effective forcing bounds $f_u^{\min}$ and $f_u^{\max}$, together with the second-order curvature measure $H$, for the circular orbit case. 
The first two subfigures indicate that $f_u^{\min}$ show some spread, while $f_u^{\max}$ is more tightly concentrated, suggesting that most reference solutions operate close to the available control authority over the missed trust interval. 
The distribution of $H$ is narrowly concentrated at small values, indicating weak curvature of the dynamics over the admissible region. 
\ifthenelse{\boolean{includefigures}}
{
\begin{figure}[!htb]
  \centering
  \begin{tikzonimage}[keepaspectratio, width=1.0\linewidth]{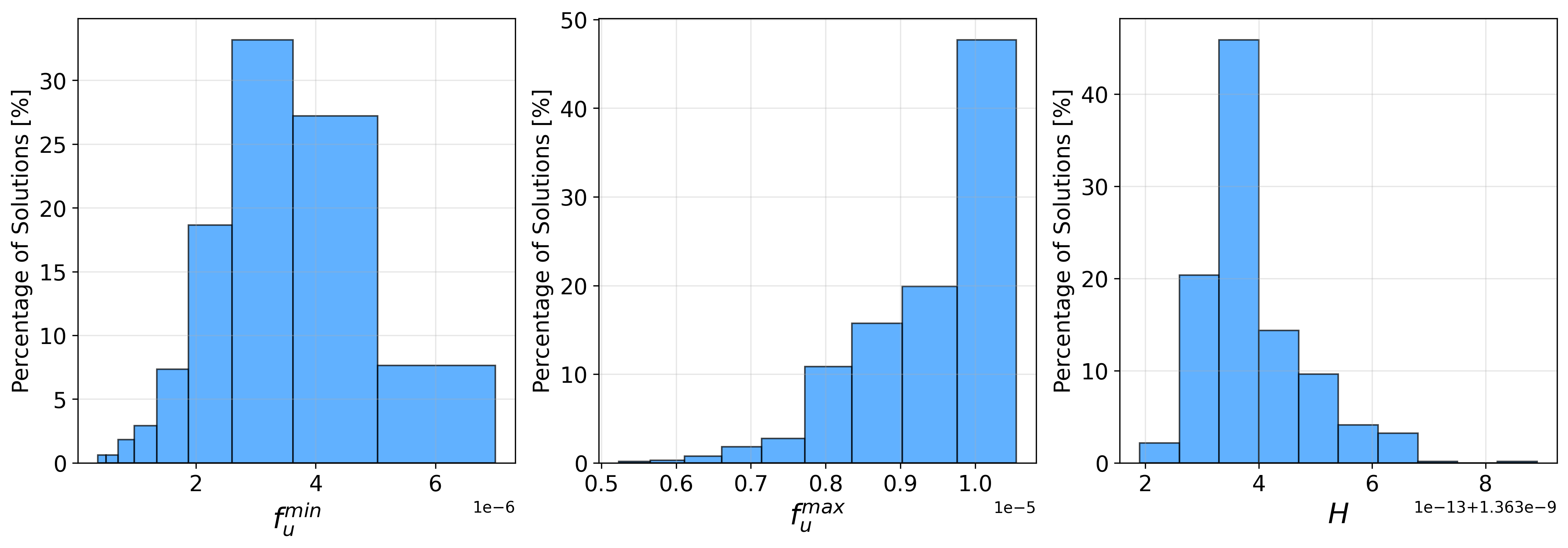}
    \small
    \node[fill=white, opacity=1.0, text opacity=1, anchor=south] at (0.2,0.0075) {$f_u^{\min}$};
    \node[fill=white, opacity=1.0, text opacity=1, anchor=south] at (0.525,0.0075) {$f_u^{\max}$};
    \node[fill=white, opacity=1.0, text opacity=1, anchor=south] at (0.85,0.025) {$H$};
    \node[fill=white, opacity=1.0, text opacity=1, anchor=south] at (0.325,0.01) {1e-6};
    \node[fill=white, opacity=1.0, text opacity=1, anchor=south] at (0.655,0.01) {1e-5};
    \node[fill=white, opacity=1.0, text opacity=1, anchor=south] at (0.975,0.025) {1e-13 + 1.363e-9};
  \end{tikzonimage}
  \caption{Empirical distributions of $f_u^{\min}$, $f_u^{\max}$ and $H$ across the solution ensemble for the circular orbit case.}
  \label{fig:parameters-circular}
\end{figure}
}
{
}

Figure~\ref{fig:parameters-eccentric} reports the corresponding empirical distributions for the eccentric orbit with $e=0.7$, separating the two true anomaly intervals considered. 
For Case~1 ($\nu\in[60^\circ,120^\circ]$), which corresponds to motion closer to periapse, the distribution of $H$ exhibits a noticeably larger spread, reflecting stronger and more variable nonlinear effects. 
In this regime, both $f_u^{\min}$ and $f_u^{\max}$ attain smaller values, indicating a decrease in effective forcing amplitudes. 
By contrast, for Case~3 ($\nu\in[120^\circ,180^\circ]$), corresponding to motion nearer to apoapse, the distribution of $H$ is tightly concentrated at smaller values, and both $f_u^{\min}$ and $f_u^{\max}$ shift toward larger magnitudes. 
This combination of weaker curvature and higher effective forcing leads to substantially less restrictive missed thrust duration bounds near apoapse compared to regions closer to periapse, contributing to the higher feasibility ratio observed.
\ifthenelse{\boolean{includefigures}}
{
\begin{figure}[!htb]
  \centering
  \begin{tikzonimage}[keepaspectratio, width=1.0\linewidth]{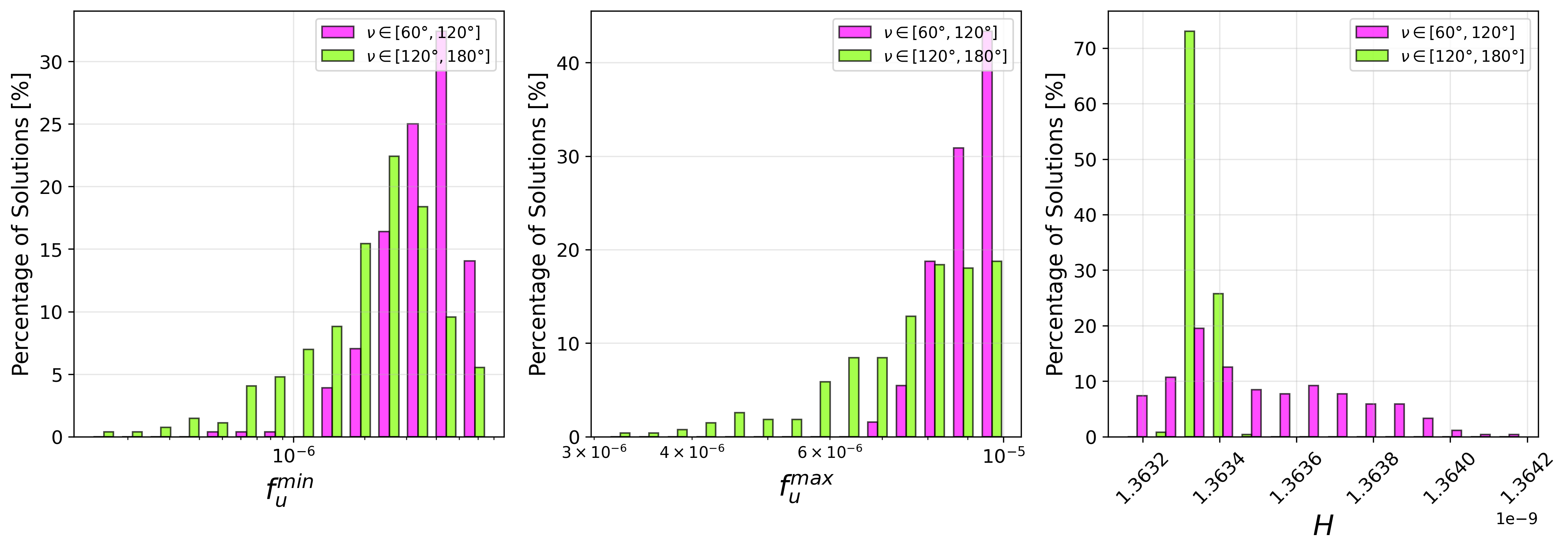}
    \small
    \node[fill=white, opacity=1.0, text opacity=1, anchor=south] at (0.19,0.06) {$f_u^{\min}$};
    \node[fill=white, opacity=1.0, text opacity=1, anchor=south] at (0.515,0.06) {$f_u^{\max}$};
    \node[fill=white, opacity=1.0, text opacity=1, anchor=south] at (0.85,0.0075) {$H$};
    \node[fill=white, opacity=1.0, text opacity=1, anchor=south] at (0.97,0.0075) {1e-9};
  \end{tikzonimage}
  \caption{Empirical distributions of $f_u^{\min}$, $f_u^{\max}$ and $H$ across the solution ensemble for the eccentric orbit case with $e=0.7$.}
  \label{fig:parameters-eccentric}
\end{figure}
}
{
}

Having characterized how $f_u^\mathrm{min}$, $f_u^\mathrm{max}$ and $H$ vary across the circular and eccentric cases, we now examine how these parameter distributions translate into the certified safe radius $\delta_{\max}$ and the maximum allowable missed thrust duration $\delta\tau_{\max}$. 
In particular, we assess the conservatism of the theoretical certificates by comparing them against quantities extracted directly from the nonlinear simulations.
For each reference solution, we construct three versions of the $\delta\tau_{\mathrm{max}}$, reflecting increasing levels of trajectory specific information:
\begin{enumerate}[label=(\roman*)]
    \item a \emph{theoretical} pair $(\delta_{\max}^{\mathrm{theoretical}}, \delta\tau_{\max}^{\mathrm{theoretical}})$ obtained by applying Theorem~\ref{thm:maximum_missed_thrust_duration} to the reference trajectory using the uniform bounds $(\alpha,\beta,H,f_u^{\min},f_u^{\max})$ obtained from the dataset;
    \item a \emph{computed} pair $(\delta_{\max}^{\mathrm{computed}}, \delta\tau_{\max}^{\mathrm{computed}})$ obtained by first measuring the maximum deviation of the realization trajectory from its reference during the missed thrust interval (yielding $\delta_{\max}^{\mathrm{computed}}$) and then substituting this value into the analytical expression for $\delta\tau_{\max}$;
    \item an \emph{actual} missed thrust duration $\delta\tau_{\max}^{\mathrm{actual}}$, obtained directly from the nonlinear simulation of the realization trajectory, without invoking any bounds or analytical approximations.
\end{enumerate}

Figure~\ref{fig:delta-tau-circular} summarizes the resulting distributions for the circular-orbit case. The top panel shows that the theoretical safe radii $\delta_{\max}^{\mathrm{theoretical}}$ are tightly clustered at small values. By contrast, the computed radii $\delta_{\max}^{\mathrm{computed}}$ are systematically larger and shifted toward higher values, as they are based on the actual maximum deviations attained by the trajectories rather than on worst-case supremum bounds.
A similar trend is observed in the bottom panel for the maximum missed thrust duration. The theoretical durations $\delta\tau_{\max}^{\mathrm{theoretical}}$ are narrowly concentrated near small values, whereas the computed durations $\delta\tau_{\max}^{\mathrm{computed}}$ lie significantly closer to the numerically observed missed thrust durations $\delta\tau_{\max}^{\mathrm{actual}}$. In particular, we observe
\begin{equation}
\delta\tau_{\max}^{\mathrm{theoretical}} \le \delta\tau_{\max}^{\mathrm{computed}} \ll \delta\tau_{\max}^{\mathrm{actual}},
\end{equation}
demonstrating that incorporating trajectory specific information through measured deviations substantially narrows the gap to achievable performance, while the purely theoretical bound remains conservative by construction.
However, the relative ordering across the solution ensemble is preserved such that trajectories with larger certified values of $\delta\tau_{\max}^{\mathrm{theoretical}}$ also tend to exhibit larger $\delta\tau_{\max}^{\mathrm{computed}}$ and $\delta\tau_{\max}^{\mathrm{actual}}$. 
This indicates that the certificate is order preserving, even though it underestimates the absolute tolerable missed thrust duration.
\ifthenelse{\boolean{includefigures}}
{
\begin{figure}[H]
  \centering
  \includegraphics[width=1.0\linewidth]{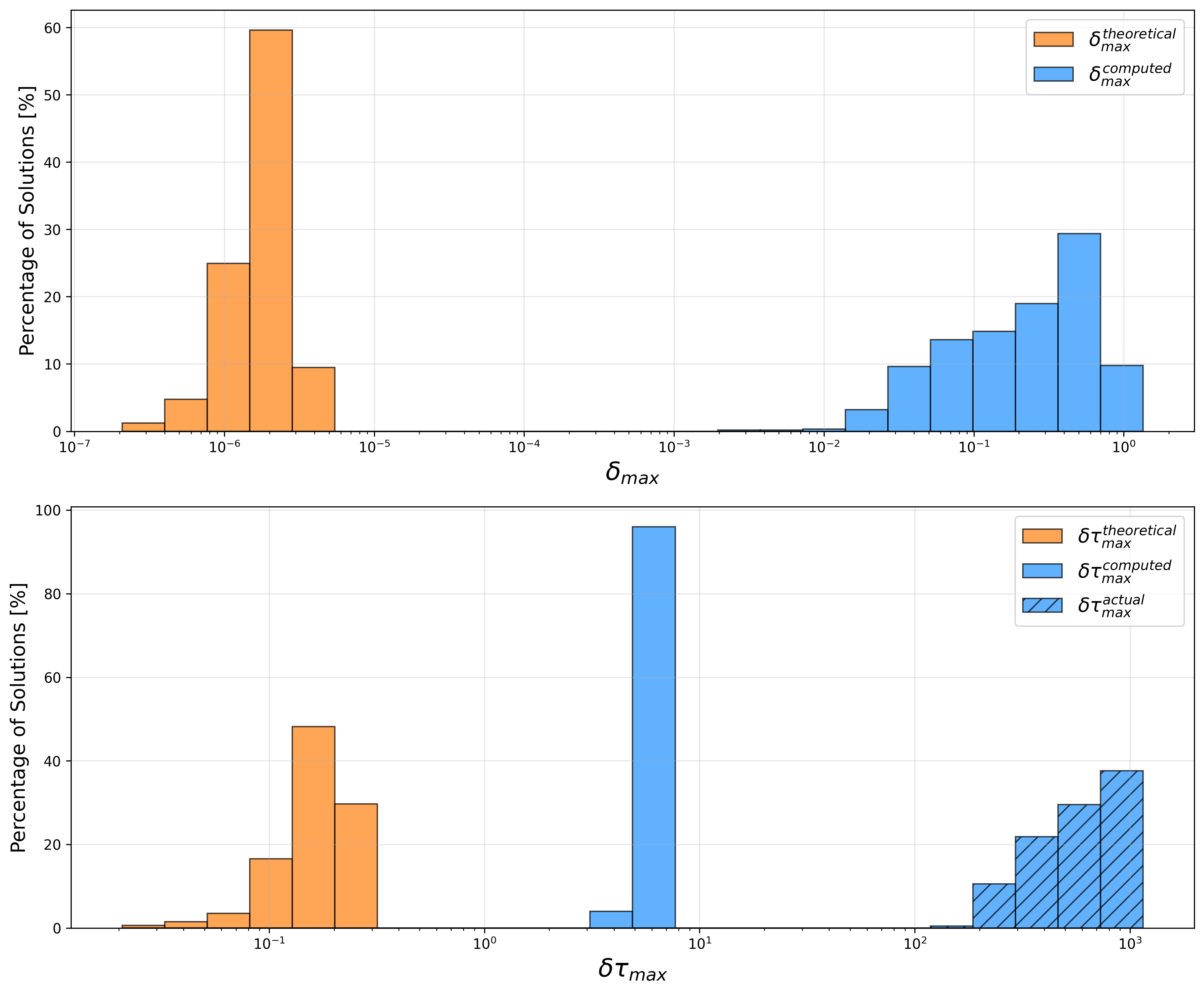}
  \caption{Distributions of the safe radius $\delta_{\max}$ (top) and maximum missed thrust duration $\delta\tau_{\max}$ (bottom) across the solution ensemble for the circular orbit case. 
  The theoretical bounds are conservative, while the computed values lie closer to the actual tolerances.}
  \label{fig:delta-tau-circular}
\end{figure}
}
{
}

While the above comparisons establish that the theoretical certificate is conservative, they do not, by themselves, explain the dependence of $\delta_{\max}$ and $\delta\tau_{\max}$ on the confidence parameter $\varepsilon$ observed in Fig.~\ref{fig:delta-tau-circular}. 
In particular, one might expect smaller values of $\varepsilon$ to produce markedly tighter safe radii $\delta_{\max}$ and correspondingly shorter admissible missed thrust durations $\delta\tau_{\max}$, reflecting stricter control of the nonlinear remainder. 
To clarify this behavior, we next revisit the structure of the $\delta_{\max}$ construction in Lemma~\ref{lem:lemma_2} and explicitly examine the role of the saturation constraint in shaping the resulting certificates.

Lemma~\ref{lem:lemma_2} shows that the safe radius $\delta(\varepsilon)$ is defined by the minimum of two mechanisms,
\begin{equation}
\label{eq:delta_min_decomposition}
\delta(\varepsilon)=
\begin{cases}
\min\!\left\{\hat{\delta}(\varepsilon), \dfrac{f_{\min}^{\bm{u}^\dagger}}{\alpha}\right\}, & \alpha>0,\\[6pt]
\hat{\delta}(\varepsilon), & \alpha=0,
\end{cases}
\end{equation}
where $\hat{\delta}(\varepsilon)$ is the \emph{nonlinearity-limited} radius induced by the Taylor remainder bound, and $f_{\min}^{\bm{u}^\dagger}/\alpha$ is a \emph{control-limited} radius required to keep the lower bound $f_{\min}^{\bm{u}^\dagger}-\alpha\delta$ in the denominator of~\eqref{eq:delta_inequality} positive.
Because the missed thrust duration certificate $\delta\tau_{\max}$ in Theorem~\ref{thm:maximum_missed_thrust_duration} is obtained by inverting the scalar envelope at the threshold $\bar{\rho}(\delta\tau_{\max})=\delta(\varepsilon)$, any regime in which $\delta(\varepsilon)$ is dominated by the saturation limit will also weaken (or eliminate) the sensitivity of $\delta\tau_{\max}$ to the confidence parameter $\varepsilon$.
Therefore, to interpret the observed dependence (or lack thereof) of $\delta_{\max}$ and $\delta\tau_{\max}$ on $\varepsilon$, we plot the empirical distribution of the saturation ratio
\begin{equation}
\label{eq:rsat_def}
    r_{\mathrm{sat}}(\varepsilon)\triangleq\frac{\delta_{\max}(\varepsilon)}{f_{\min}^{\bm{u}^\dagger}/\alpha},
\end{equation}
where $r_{\mathrm{sat}}(\varepsilon)\approx 1$ indicates that $\delta_{\max}(\varepsilon)$ is saturating at
$f_{\min}^{\bm{u}^\dagger}/\alpha$ (control-limited), whereas $r_{\mathrm{sat}}(\varepsilon)\ll 1$ indicates that
$\delta_{\max}(\varepsilon)\approx \hat{\delta}(\varepsilon)$ (nonlinearity-limited).

Figure~\ref{fig:saturation_ratio} motivates the weak dependence of the theoretical certificates on $\varepsilon$ in the circular case.
As shown in Figs.~\ref{fig:delta_max_distribution_varying_eps}--\ref{fig:delta_tau_max_distribution_varying_eps}, the distributions of $\delta_{\max}(\varepsilon)$ and $\delta\tau_{\max}(\varepsilon)$ overlap substantially across several orders of magnitude of $\varepsilon$.
Figure~\ref{fig:saturation_ratio} explains this behavior: the saturation ratio $r_{\mathrm{sat}}(\varepsilon)$ is tightly concentrated near $1$, indicating that $\delta_{\max}(\varepsilon)$ is frequently determined by the saturation limit $f_{\min}^{\bm{u}^\dagger}/\alpha$ in~\eqref{eq:delta_min_decomposition}, rather than by the $\varepsilon$-dependent nonlinearity bound $\hat{\delta}(\varepsilon)$.
Since Theorem~\ref{thm:maximum_missed_thrust_duration} computes $\delta\tau_{\max}$ by inverting the envelope at $\delta(\varepsilon)$, i.e., $\bar{\rho}(\delta\tau_{\max})=\delta(\varepsilon)$, saturation of $\delta(\varepsilon)$ directly induces saturation of $\delta\tau_{\max}$, and consequently reduces the apparent sensitivity of the certificate to $\varepsilon$.
\begin{figure}[!htb]
    \centering
    \begin{subfigure}[b]{0.48\linewidth}
        \centering
        \includegraphics[width=\linewidth]{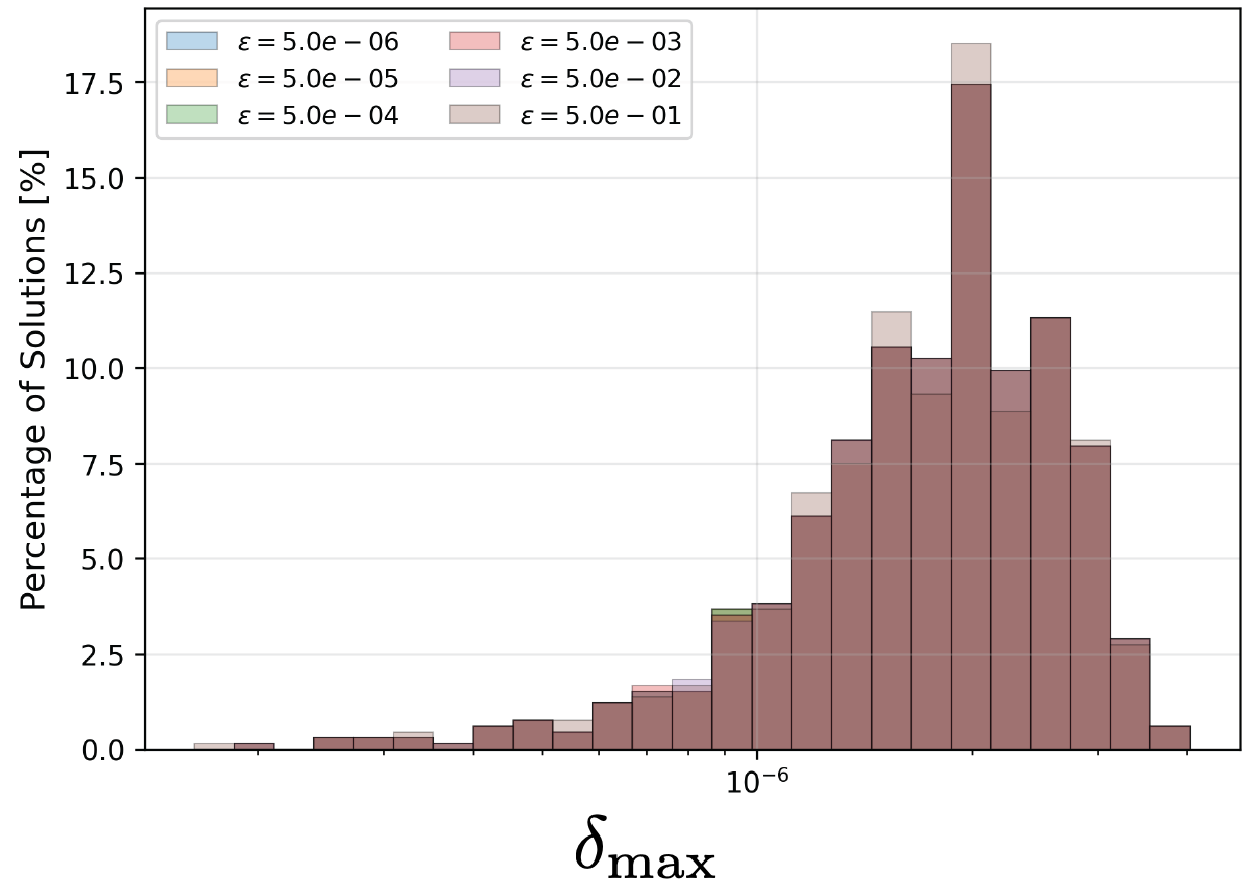}
        \caption{Theoretical $\delta_{\max}$ distributions show minimal dependence on $\epsilon$.}
        \label{fig:delta_max_distribution_varying_eps}
    \end{subfigure}
    \hfill
    \begin{subfigure}[b]{0.48\linewidth}
        \centering
        \includegraphics[width=\linewidth]{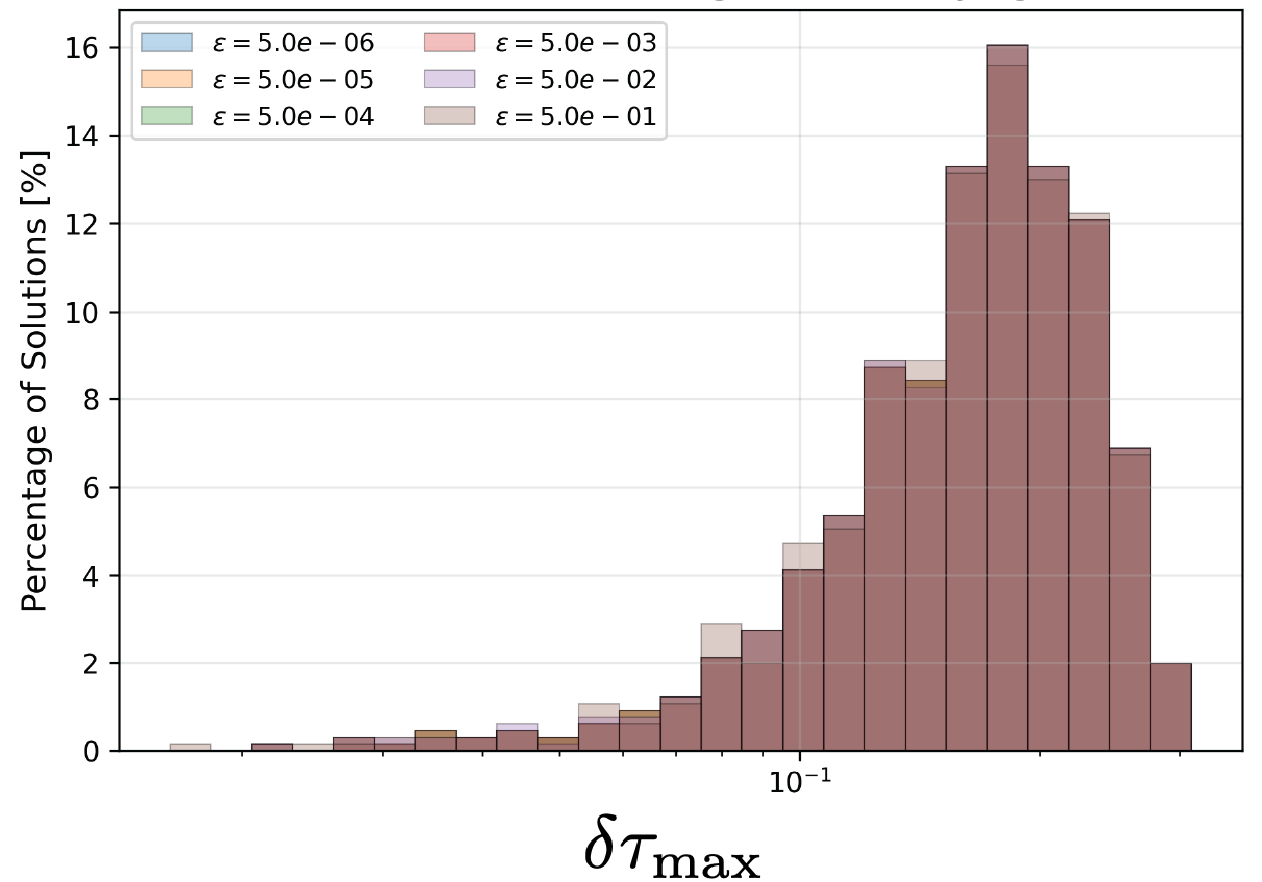}
        \caption{Theoretical $\delta\tau_{\max}$ distributions exhibit weak sensitivity to $\epsilon$.}
        \label{fig:delta_tau_max_distribution_varying_eps}
    \end{subfigure}

    \vspace{0.5em}

    \begin{subfigure}[b]{0.6\linewidth}
        \centering
        \includegraphics[width=\linewidth]{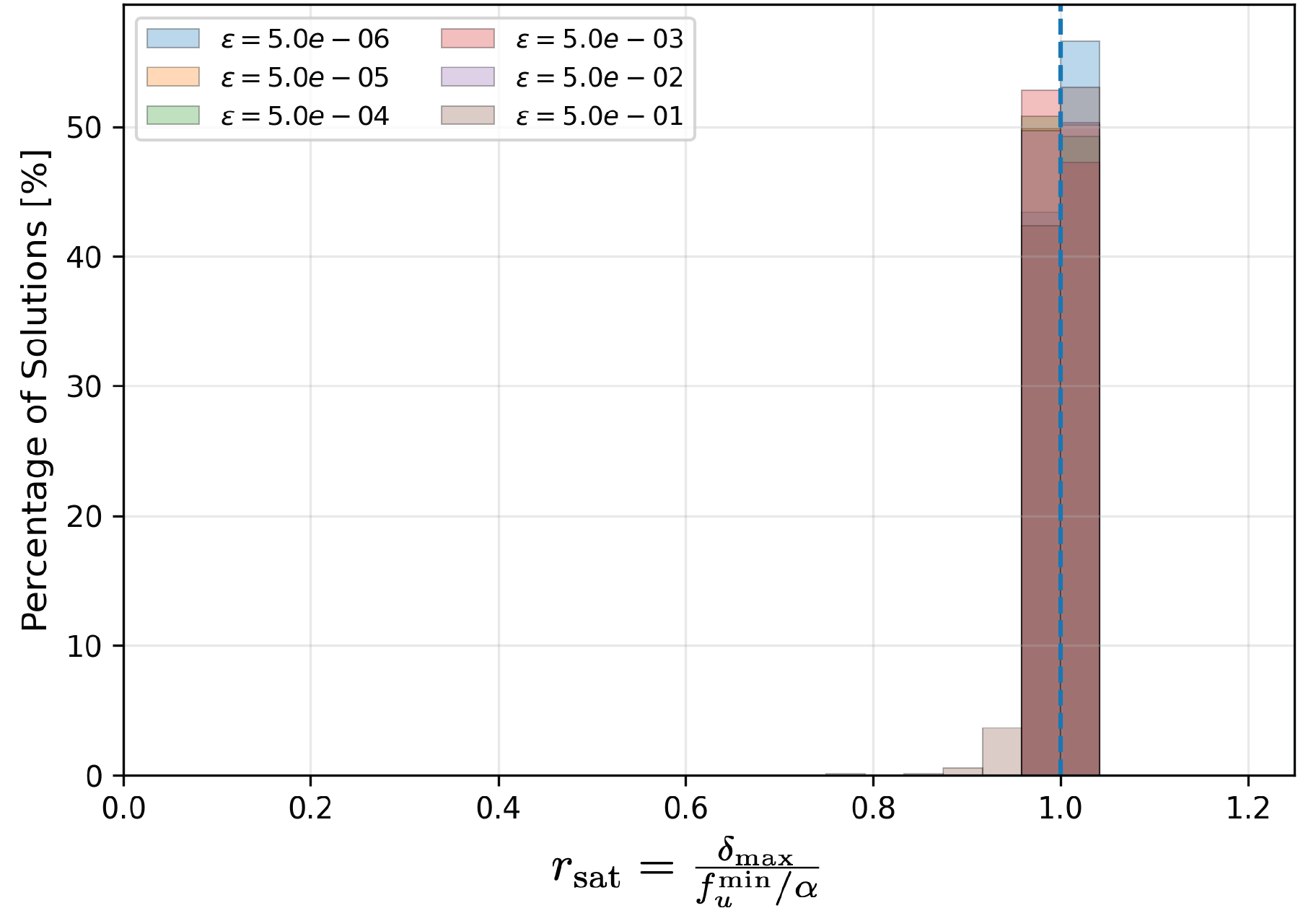}
        \caption{Saturation ratio $r_{\mathrm{sat}}$ concentrates near unity, indicating control-limited bounds.}
        \label{fig:control_saturation_limit_varying_eps}
    \end{subfigure}

    \caption{Saturation limit behavior for the circular reference orbit.}
    \label{fig:saturation_ratio}
\end{figure}

A similar comparison is performed for the eccentric reference orbit, with results summarized in Fig.~\ref{fig:delta-tau-eccentric}.
The distributions of $(\delta_{\max}^{\mathrm{theoretical}},\delta_{\max}^{\mathrm{computed}})$ and $(\delta\tau_{\max}^{\mathrm{theoretical}},\delta\tau_{\max}^{\mathrm{computed}},\delta\tau_{\max}^{\mathrm{actual}})$ for the eccentric case closely mirror those obtained in the circular case: the theoretical bounds remain the most conservative, the computed values lie substantially closer to the realized tolerances, and the ordering is preserved across the ensemble.  
As the target spacecraft moves towards the apoapsis, the level of nonlinearity decreases causing the $\delta_{\mathrm{max}}$ to shift to the right.
A similar shift is also seen with $\delta\tau_{\mathrm{max}}$.
\ifthenelse{\boolean{includefigures}}
{
\begin{figure}[!htb]
  \centering
  \includegraphics[width=1.0\linewidth]{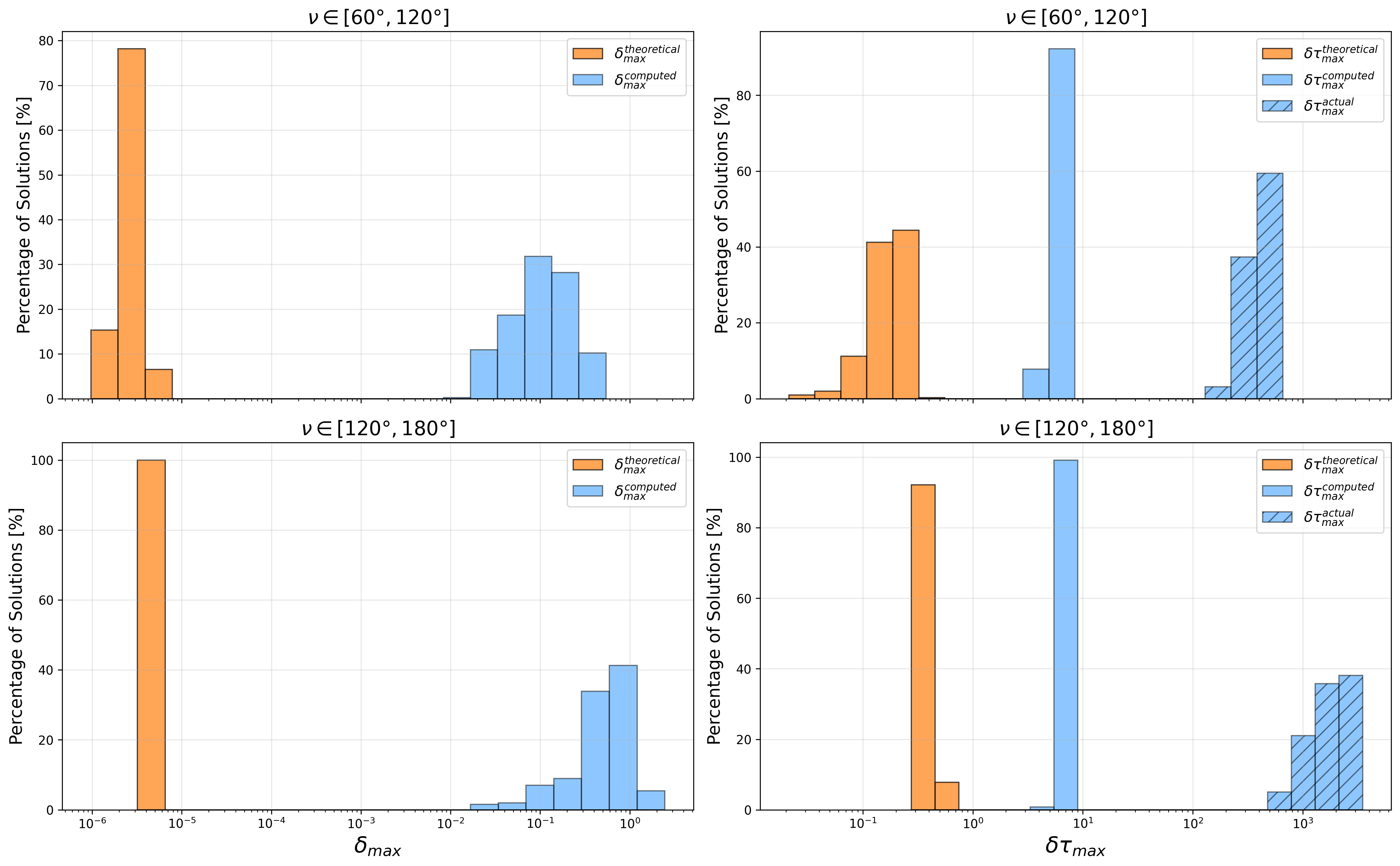}
  \caption{Distributions of the safe radius $\delta_{\max}$ (top) and maximum missed thrust duration $\delta\tau_{\max}$ (bottom) across the solution ensemble for the eccentric orbit case.}
  \label{fig:delta-tau-eccentric}
\end{figure}
}
{
}

The closed-form expressions in Theorem~\ref{thm:maximum_missed_thrust_duration} also permit an analytical and empirical assessment of the sensitivity of the certified quantities $\delta_{\max}$ and $\delta\tau_{\max}$ to the bounding parameters $(\alpha,\beta,H,f_u^{\min},f_u^{\max})$. 
We performed a correlation study across all solutions from the circular case, summarized in Fig.~\ref{fig:correlation}. 
\ifthenelse{\boolean{includefigures}}
{
\begin{figure}[!htb]
  \centering
  \includegraphics[width=0.45\textwidth]{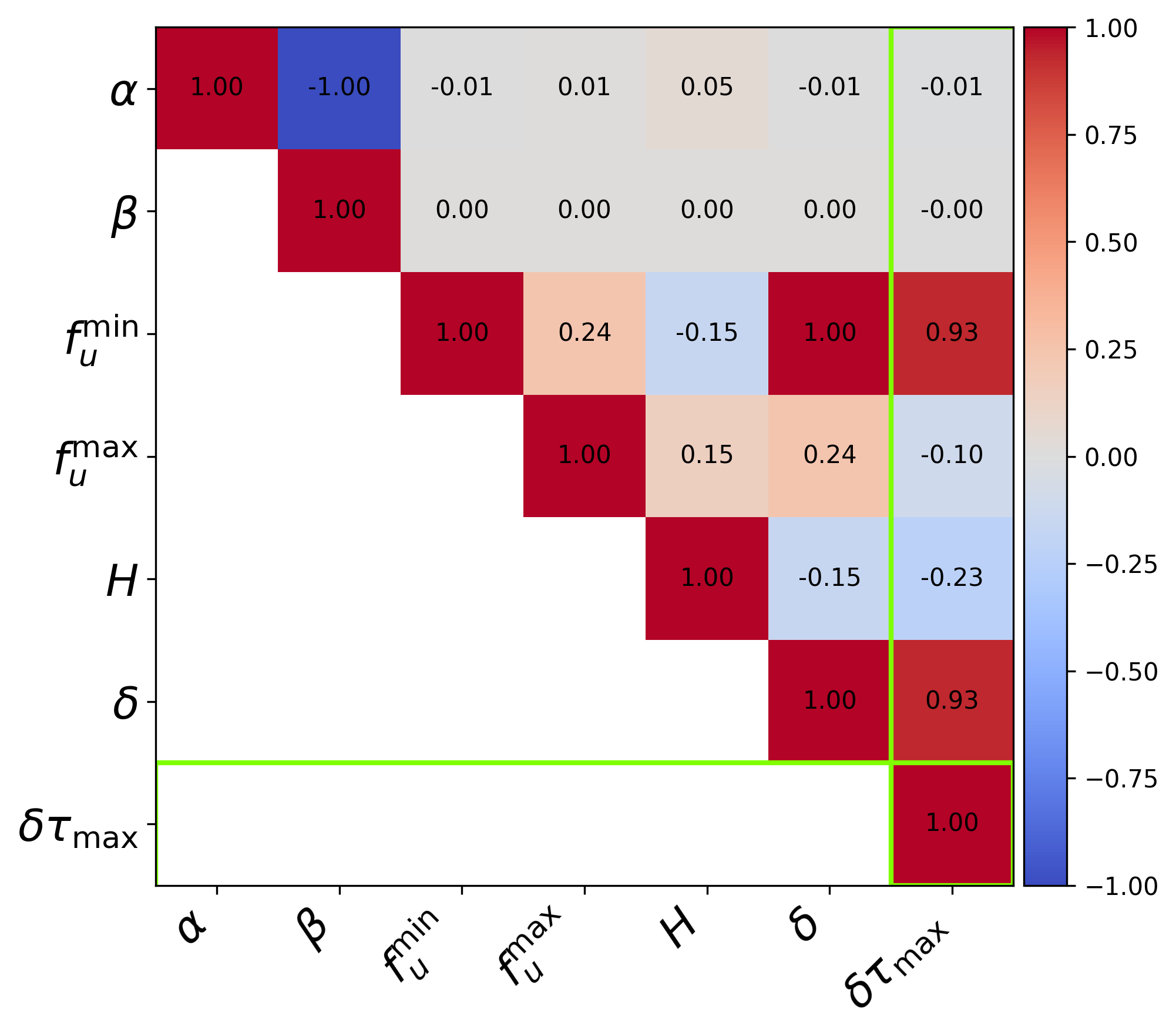}
  \caption{Correlation matrix across the ensemble. The certified quantities $\delta_{\max}$ and $\delta\tau_{\max}$ depend almost exclusively on $f_u^{\min}$, $f_u^{\max}$ and $H$ while the remaining parameters show minimal influence.}
  \label{fig:correlation}
\end{figure}
}
{
}
This joint analysis leads to several conclusions.
First, consistent with the results shown in Fig.~\ref{fig:saturation_ratio}, $\delta_{\max}$ exhibits a perfect positive correlation with $f_u^{\min}$, while showing negligible correlation with $(\alpha,\beta,H,f_u^{\max})$. 
The correlation study confirms that, for this particular case study, this single parameter dominates the variability of $\delta_{\max}$ across the ensemble.
Second, $\delta\tau_{\max}$ is almost perfectly correlated with $\delta_{\max}$ itself and, through this dependence, inherits its dominant sensitivity to $f_u^{\min}$. 
In contrast, the dependence on $f_u^{\max}$ is extremely weak, and the influence of $\alpha$ and $H$ is similarly negligible. 
Mathematically, this agrees with the structure of Theorem~\ref{thm:maximum_missed_thrust_duration}: for the parameter regime of interest, the Riccati discriminant $\Delta = \alpha^2 - 2H f_u^{\max}$  remains strictly positive with very small variation, so changes in $f_u^{\max}$ do not alter the qualitative solution branch or the dominant exponential rate. 
Once $\Delta>0$ is fixed, the leading-order dependence of the solution becomes linear in $\delta_{\max}$, and therefore in $f_u^{\min}$.
These results collectively identify the primary source of conservatism in the theoretical certificates.  
To improve the tightness of \emph{both} bounds $\delta_{\max}$ and $\delta\tau_{\max}$, the parameter that should be relaxed first is the lower control effectiveness bound $f_u^{\min}$ in Assumption~\ref{ass:control_bounds}.  
Replacing the global worst-case constant with a trajectory-dependent or interval-restricted lower bound would increase $\delta_{\max}$ directly and, through the near-linear dependence, increase $\delta\tau_{\max}$ by the same order of magnitude.  
In contrast, refining $f_u^{\max}$, $\alpha$, or $H$ would yield only marginal improvements, as confirmed by their weak empirical correlations with both certified quantities.
Finally, the structure of Fig.~\ref{fig:correlation} corroborates the monotonicity properties of the theoretical formulas: larger radius bounds reliably predict larger missed thrust duration bounds, even though the absolute values remain conservative.  
This aligns with the ordering observed in Fig.~\ref{fig:delta-tau-circular}, where the theoretical quantities underestimate the achievable tolerances but preserve the relative ranking across trajectories.

\subsection{Recovery After Missed Thrust Event}
\label{subsec:experimental results relative spacecraft motion:recovery after missed thrust event}

The bounds developed in Theorem~\ref{thm:maximum_missed_thrust_duration} provide a \emph{sufficient} condition under which the LL recovery subproblem may be accurately approximated by a first-order model over an MTE. 
The numerical results in Section~\ref{subsec:experimental results relative spacecraft motion:safe state-space radius delta and maximum missed thrust duration delta tau_max} show that the realized missed thrust durations in simulation can exceed this bound while the closed-loop controller still successfully recovers and tracks the reference. 
Importantly, exceeding the bound $\delta\tau_{\max}$ does not imply a loss of controllability or inevitable failure of the feedback law; rather, it indicates that the sufficient condition used to certify linear model validity is no longer guaranteed to hold. 
To interpret this discrepancy, we evaluate a post-hoc controllability energy metric computed from the linearization along the reference trajectory that quantifies whether the available actuator authority is, in principle, sufficient to recover from the deviation induced by missed thrust.

Recall the control-affine nonlinear dynamics $\dot{\bm{\xi}}_t = \tilde{f}(\bm{\xi}_t,\bm{u}_t)$ from Eq.~\eqref{eq:dynamics}, the reference pair $(\bm{\xi}^\dagger,\bm{u}^\dagger)$ satisfying Eq.~\eqref{eq:reference_dynamics}, and the realization pair $(\bm{\xi}^\omega,\bm{u}^\omega)$ satisfying Eq.~\eqref{eq:realization_dynamics}. 
Over the missed thrust interval $[\tau_1,\tau_2]$, the realization control is identically zero, $\bm{u}^\omega_t=0$, and the error state is defined as in Eq.~\eqref{eq:error}:
\begin{equation}
\bm{\tilde{\xi}}_t := \bm{\xi}^{\omega}_t - \bm{\xi}^{\dagger}_t .
\end{equation}
In particular, the MTE induces a nonzero deviation, which we denote by
\begin{equation}
    \bm{\tilde{\xi}}^{+} \equiv \bm{\tilde{\xi}}_{\tau_2} = \bm{\xi}^{\omega}_{\tau_2}-\bm{\xi}^{\dagger}_{\tau_2} .
    \label{eq:error_plus}
\end{equation}
The recovery phase begins at time $\tau_2$, and the central question addressed by the controllability analysis is whether, for a given available recovery horizon $T_{\mathrm{rec}}>0$, the deviation $\bm{\tilde{\xi}}^{+}$ can be driven back to zero by admissible control inputs.

To this end, we use the first-order sensitivity information defined in Eq.\eqref{eq:jacobians},
\begin{equation}
    A_t := \nabla_{\xi}\tilde f\left(\bm{\xi}^{\dagger}_t,\bm{u}^{\dagger}_t\right),
    \qquad
    B_t := \nabla_{u}\tilde f\left(\bm{\xi}^{\dagger}_t,\bm{u}^{\dagger}_t\right),
\end{equation}
and consider the time-varying linear approximation of the error dynamics about the reference over the recovery interval $T_{\mathrm{rec}}$:
\begin{equation}
    \dot{\bm{\tilde{\xi}}}_t = A_t \bm{\tilde{\xi}}_t + B_t \bm{\tilde{u}}_t,
    \qquad
    t \in [\tau_2,\tau_2+T_{\mathrm{rec}}],
    \label{eq:ltv_error_recovery}
\end{equation}
where the control deviation is $\bm{\tilde u}_t := \bm{u}^{\omega}_t-\bm{u}^{\dagger}_t$. 
During recovery, unlike the missed thrust interval, $\bm{u}^\omega_t$ is generally nonzero and is selected by the feedback controller or the LL recovery maneuver.
Let $\Phi(t,s)$ denote the state-transition matrix associated with $A_t$, i.e.,
\begin{equation}
    \frac{d}{dt}\Phi(t,s) = A_t \Phi(t,s),
    \qquad
    \Phi(s,s)=I,
    \qquad \tau_2\le s\le t\le \tau_2+T_{\mathrm{rec}}.
    \label{eq:stm_def}
\end{equation}
For a fixed recovery horizon $T_{\mathrm{rec}}>0$, the finite-horizon controllability Gramian of Eq.\eqref{eq:ltv_error_recovery} over the recovery interval is defined by
\begin{equation}
    \mathrm{W}\left(\tau_2,T_{\mathrm{rec}}\right)
    :=
    \int_{\tau_2}^{\tau_2+T_{\mathrm{rec}}}
    \Phi\left(t,\tau_2\right)\,
    B_t B_t^\top\,
    \Phi\left(t,\tau_2\right)^\top
    \,dt.
    \label{eq:gramian_continuous_recovery}
\end{equation}
The matrix $\mathrm{W}(\tau_2,T_{\mathrm{rec}})$ is generally symmetric positive semidefinite, and it is positive definite if and only if the linearized system is controllable on the finite horizon $[\tau_2,\tau_2+T_{\mathrm{rec}}]$.

Given the realized post-event deviation $\bm{\tilde{\xi}}^{+}$, we consider the finite-horizon minimum-energy recovery problem for the linear model Eq.~\eqref{eq:ltv_error_recovery}:
\begin{equation}
    E_{\min}\left(\bm{\tilde{\xi}}^{+};\tau_2,T_{\mathrm{rec}}\right)
    :=
    \min_{\bm{\tilde u}(\cdot)}
    \int_{\tau_2}^{\tau_2+T_{\mathrm{rec}}}
    \|\bm{\tilde u}_t\|_2^2\,dt
\end{equation}
subject to Eq.~\eqref{eq:ltv_error_recovery} and the boundary conditions
\begin{equation}
    \bm{\tilde{\xi}}_{\tau_2} = \bm{\tilde{\xi}}^{+},
    \qquad
    \bm{\tilde{\xi}}_{\tau_2+T_{\mathrm{rec}}} = 0.
    \label{eq:recovery_bc}
\end{equation}
Whenever $\mathrm{W}(\tau_2,T_{\mathrm{rec}})$ is invertible, this minimum energy admits the closed-form expression
\begin{equation}
    E_{\min}\left(\bm{\tilde{\xi}}^{+};\tau_2,T_{\mathrm{rec}}\right)
    =
    \bm{\tilde{\xi}}^{+ ^\top}
    W\left(\tau_2,T_{\mathrm{rec}}\right)^{-1}
    \bm{\tilde{\xi}}^{+}.
    \label{eq:emin_recovery}
\end{equation}

To compare this minimum required energy with available control authority, we impose an elementwise bound on the recovery deviation control,
\begin{equation}
    |\tilde u_{t,i}| \le \bar{u}_i \qquad \forall t \in [\tau_2,\tau_2+T_{\mathrm{rec}}],  i=1,\dots,m,
    \label{eq:u_bound}
\end{equation}
for some $\bar{u}\in\reals^m$. Then, by Cauchy-Schwarz,
\begin{equation}
    \int_{\tau_2}^{\tau_2+T_{\mathrm{rec}}}
    \|\bm{\tilde u}_t\|_2^2\,dt
    \le
    \int_{\tau_2}^{\tau_2+T_{\mathrm{rec}}}
    \|\bar{u}\|_2^2\,dt
    =
    T_{\mathrm{rec}}\,\|\bar{u}\|_2^2
    =:
    E_{\mathrm{ava}}\left(T_{\mathrm{rec}}\right).
    \label{eq:eavail_recovery}
\end{equation}
This motivates the controllability energy feasibility ratio
\begin{equation}
    r_\mathrm{e}
    :=
    \frac{E_{\mathrm{ava}}\left(T_{\mathrm{rec}}\right)}
    {E_{\min}\left(\bm{\tilde{\xi}}^{+};\tau_2,T_{\mathrm{rec}}\right)}.
    \label{eq:rhoE_recovery}
\end{equation}
A value $r_\mathrm{e}\ge 1$ indicates that, within the linearized recovery model Eq.~\eqref{eq:ltv_error_recovery} and given the bound Eq.~\eqref{eq:u_bound}, there exists a recovery input with sufficient energy to drive the realized deviation $\bm{\tilde{\xi}}^{+}$ back to the reference within the available recovery horizon. 
Conversely, $r_\mathrm{e}<1$ indicates energetic infeasibility under the chosen bounds and horizon.

\ifthenelse{\boolean{includefigures}}
{
    \begin{figure}[!htb]
        \centering
        \begin{subfigure}[t]{0.45\textwidth}
            \centering
            \includegraphics[keepaspectratio, width=\textwidth]{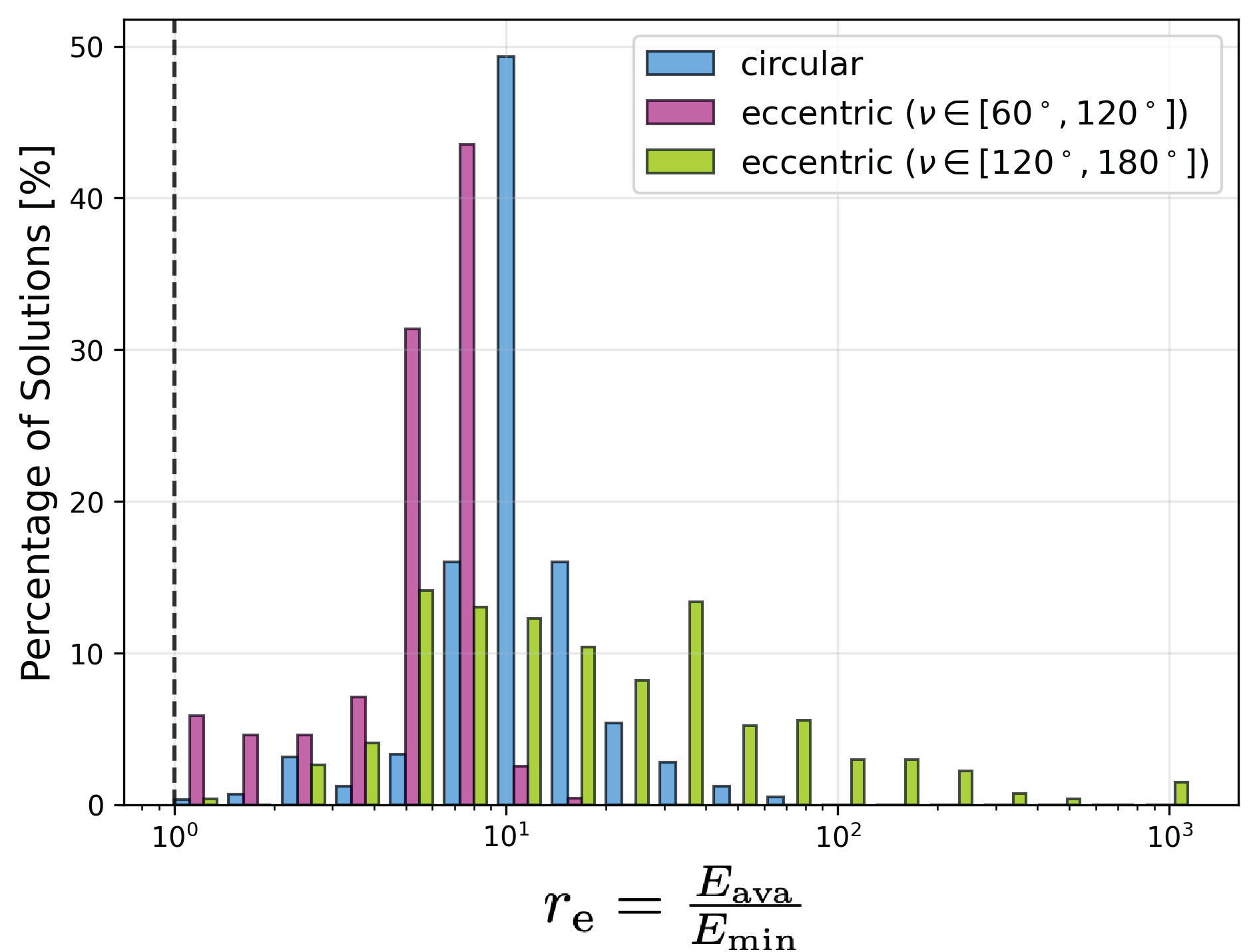}
            \caption{
            Distribution of the controllability energy ratio $r_\mathrm{e}$ with the dashed line indicating the feasibility threshold $r_\mathrm{e} = 1$
            }
            \label{fig:ratio_energy}
        \end{subfigure}
        \hfill
        \begin{subfigure}[t]{0.45\textwidth}
            \centering
            \includegraphics[keepaspectratio, width=\textwidth]{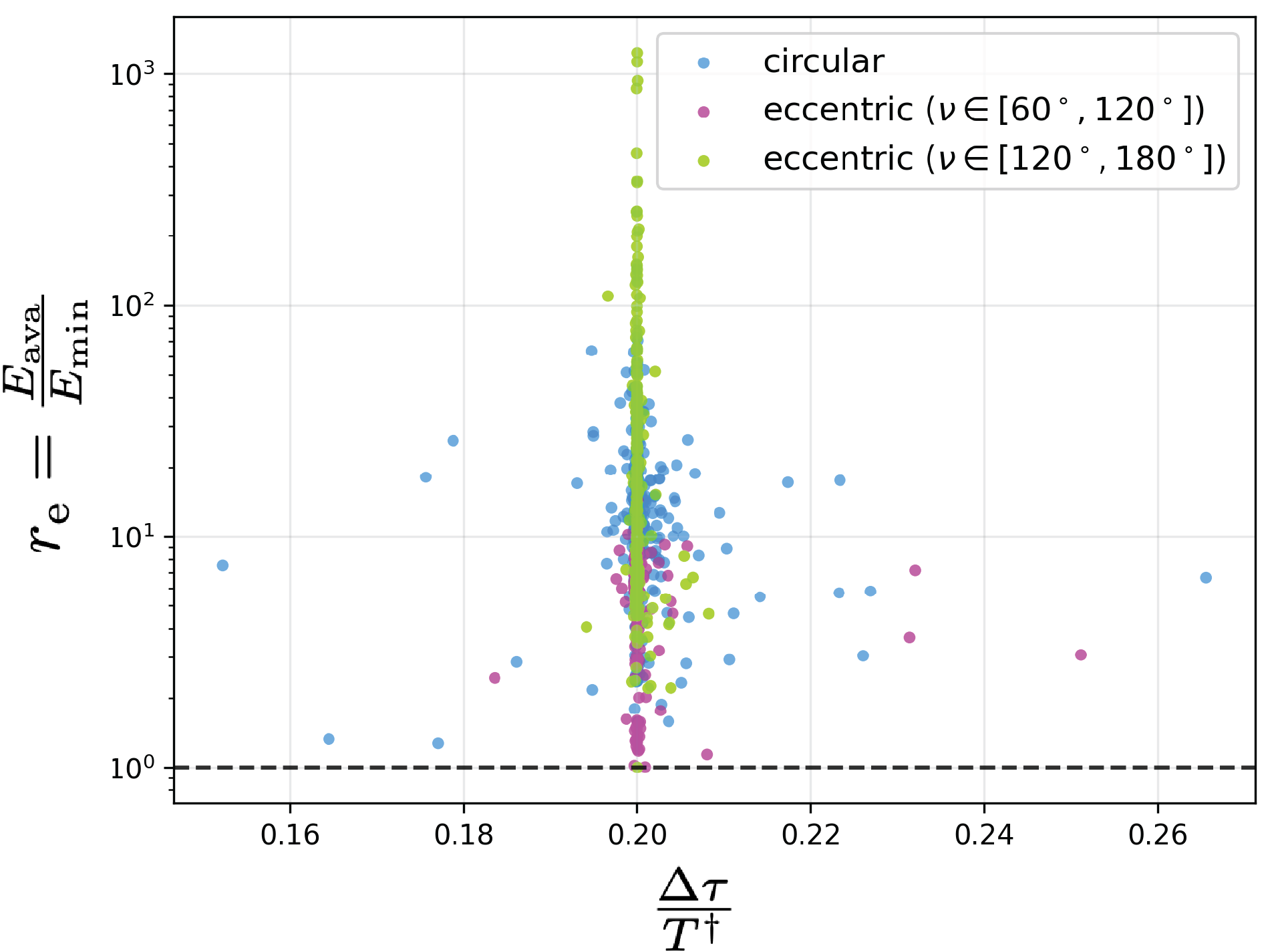}
            \caption{
               $r_\mathrm{e}$ versus normalized missed thrust duration
            }
            \label{fig:ratio_energy_vs_missed_thrust_duration}
        \end{subfigure}
        \caption{
        controllability energy diagnostics for circular and eccentric cases
        }
        \label{fig:controllability_analysis}
    \end{figure}
}
{
}
Figure~\ref{fig:controllability_analysis} summarizes the controllability energy analysis for the eccentric-orbit experiments: Fig.~\ref{fig:ratio_energy} reports the distribution of $r_\mathrm{e}$ across feasible solutions, and Fig.~\ref{fig:ratio_energy_vs_missed_thrust_duration} shows $r_\mathrm{e}$ as a function of the missed thrust duration. 
To compare across eccentric-orbit cases with differing overall horizons, missed thrust durations are normalized by the total mission time $T^\omega$ computed from the simulated time history.
The results indicate that $r_\mathrm{e}$ is frequently significantly larger than unity, implying substantial energetic margin for recovery even when $\Delta\tau$ exceeds the certificate $\delta\tau_{\max}$ from Theorem~\ref{thm:maximum_missed_thrust_duration}. This supports the interpretation that $\delta\tau_{\max}$ is a sufficient condition for first-order model validity, but not a necessary condition for closed-loop recoverability: recovery may remain feasible whenever the linearized dynamics over the available horizon are practically controllable and sufficient actuation authority exists, as captured by $r_\mathrm{e}\ge 1$.
Finally, we note that $r_\mathrm{e}$ is a post-hoc diagnostic computed from the linearization Eq.~\eqref{eq:jacobians} along the reference and evaluated over a finite recovery horizon. 
Its value depends on the realized deviation $\bm{\tilde{\xi}}^{+}$, the horizon $T_{\mathrm{rec}}$, and the actuation bounds Eq.~\eqref{eq:u_bound}, and it does not constitute a global guarantee for the original nonlinear dynamics Eq.~\eqref{eq:dynamics}. 
Rather, it provides a quantitative explanation for why realized missed thrust durations may exceed the first-order certificate while closed-loop recovery remains successful in simulation.

\section{Conclusion}
\label{sec:conclusion}

In this work, a bi-level optimal control framework was introduced for missed thrust design in low-thrust spacecraft missions.
By posing the nominal trajectory design as an upper-level (UL) problem and embedding each potential recovery maneuver as a lower-level (LL) linear–quadratic regulator (LQR) subproblem, the framework departs fundamentally from the traditional operational paradigm in which a reference trajectory is designed first and recovery controllers are applied only post hoc. 
Instead, the proposed bi-level formulation explicitly accounts for recovery behavior during the trajectory design process itself, ensuring that the reference solution is intentionally placed in a regime where recovery can be effective in the presence of missed thrust events. 
The resulting single-level reformulation enforces the LL optimality conditions exactly, yielding a continuously differentiable, sparsity-preserving nonlinear program that can be solved efficiently using standard numerical optimization techniques while retaining full fidelity to the recovery dynamics.
An analytical robustness certificate was derived, providing a sufficient bound on the maximum missed thrust duration under which the LQR model in the LL remains appropriate.
The certificate depends only on quantities that can be precomputed along the reference trajectory enabling rapid assessment of robustness over large ensembles of possible missed thrust realizations.
This yields ex ante guarantees on recoverability without reliance on Monte Carlo simulation or conservative margining practices, and integrates naturally with direct transcription tools widely used in modern trajectory design.
This analytical capability can potentially enable systematic placement of coast arcs, allocation of thrust segments, and characterization of sensitivity to control outages within existing trajectory design pipelines.
The numerical study demonstrated that the theoretical bounds preserve relative ordering between the safe radius and the maximum allowable missed thrust duration across a broad ensemble of scenarios, with dominant sensitivity arising from the lower control-effectiveness bound. 
While these bounds provide information about the appropriateness of the LL linear model, they do not characterize the feasibility of the closed-loop recovery phase. 
To address this distinction, a finite-horizon controllability energy analysis was performed as a post-hoc diagnostic on the recovery trajectories.
By evaluating the minimum control energy required to drive the deviation between the reference and the realization solutions back to the reference over the available recovery horizon, and comparing it to the energy available under actuator bounds, this analysis provides a quantitative measure of energetic recoverability within the linear model. 
The results show that successful recoveries frequently possess substantial energetic margin, even when the realized missed thrust duration exceeds the bound provided by the certificate. 
This confirms that the derived bound should be interpreted as a sufficient condition for first-order model appropriateness rather than a necessary condition for closed-loop recoverability, and helps explain why LQR-based recovery can remain effective beyond the certified regime.
Several extensions naturally follow from the present work.
Rather than uniform worst-case bounds, future work may employ trajectory dependent, interval based, or state dependent curvature bounds potentially supported by validated reachability analysis or probabilistic confidence intervals to significantly reduce conservatism in the certificates.
Additional research directions could also look into generalizing the LL model beyond the LQR setting, developing stochastic framework which integrate the missed thrust likelihood with probabilistic guarantees, and extending the proposed approach to problems involving stronger nonlinearities, such as long-horizon low-thrust transfers, multi-body dynamical environments, or high-fidelity ephemeris-based mission models.
Together, these efforts aim to further enhance the rigor, applicability, and predictive accuracy of robustness analysis for low-thrust spaceflight.
\section*{Acknowledgments}
The simulations presented in this article were performed on computational resources managed and supported by Princeton Research Computing, a consortium of groups including the Princeton Institute for Computational Science and Engineering (PICSciE) and the Office of Information Technology's High Performance Computing Center and Visualization Laboratory at Princeton University.

\bibliography{references}

\newpage
\section{Appendix}
\label{sec:appendix}

\subsection{Second-Order Taylor Expansion of the Vector Field}
\label{app:taylor_expansion}

For any fixed $t$, a Taylor expansion of the vector field with integral remainder gives
\begin{equation*}
    \begin{split}
    f \bigl(t, \xi^{\dagger} + e, u^{\dagger} + u_e\bigr) = f \bigl(t, \xi^{\dagger}, u^{\dagger}\bigr)
    & +\nabla_{ \xi}f \bigl(t, \xi^{\dagger}, u^{\dagger} \bigr) e + \nabla_{u}f \bigl(t, \xi^{\dagger}, u^{\dagger} \bigr) u_e \\
    & +r(t, e) + 2r(t, e, u_e) + r(t, u_e)\mathrm d\theta .
\end{split}
\end{equation*}
where,
\begin{equation*}
\begin{split}
    r(t, e) = &\int_{0}^{1}(1-\theta)\Bigl[\nabla_{ \xi\xi}^{2}f \bigl(t,\xi^{\dagger} +\theta e,u^{\dagger} +\theta u_e\bigr)[e,e] \Bigr]\mathrm d\theta \\
    r(t, e, u_e) = &\int_{0}^{1} 2\nabla_{ \xi u}^{2}f \bigl(t,\xi^{\dagger} +\theta e,u^{\dagger} +\theta u_e\bigr)[e,u_e] \Bigr]\mathrm d\theta \\
    r(t, u_e) = &\int_{0}^{1} \nabla_{uu}^{2}f \bigl(t,\xi^{\dagger} +\theta e,u^{\dagger} +\theta u_e\bigr)[u_e,u_e] \Bigr]\mathrm d\theta.
\end{split}
\end{equation*}
We assume that the dynamics are control-affine, which allows us to drop the higher-order control terms.
We can bound the norm of the remainder term $\|r(t, e)\|$ as follows:
\begin{align}\label{eq:remainder_bound}
  \bigl\| r(t,e) \bigr\|
  &= \left\|
       \int_{0}^{1} (1-\theta)
              \nabla_{ \xi}^{2}f
                  \bigl(t,\xi^{\dagger}+\theta e,0\bigr)
              [e,e]
          d\theta
     \right\| \nonumber \\
  &\le
     \int_{0}^{1} (1-\theta)
            \bigl\|
              \nabla_{ \xi}^{2}f
                  \bigl(t,\xi^{\dagger}+\theta e,0\bigr)
              [e,e]
            \bigr\|
     d\theta \qquad\text{(Triangle Inequality)} \nonumber \\
  &\le
     \int_{0}^{1} (1-\theta)
            \bigl\|
              \nabla_{ \xi}^{2}f
                  \bigl(t,\xi^{\dagger}+\theta e,0\bigr)
            \bigr\|
            \|e\|\|e\|
     d\theta \qquad\text{(Submultiplicativity)} \nonumber \\
  &\le
     H\|e\|^{2}
     \int_{0}^{1} (1-\theta)d\theta
     \qquad\text{(Assumption \ref{ass:second_order_system_bounds})} \nonumber \\
  &= \frac{H}{2}\|e\|^{2}.
\end{align}

\subsection{Reverse Triangle Inequality}
\label{app:reverse_triangle_inequality}

For any vectors $x,y\in\mathbb R^{n}$ and any norm $\|\cdot\|$, the usual \emph{triangle} inequality gives $\|x\| = \| (x + y) - y\| \le \|x + y\| + \|y\|, \|y\| = \| (x + y) - x\| \le \|x + y\| + \|x\|.$
Combining the two inequalities yields
\begin{equation*}
    -\|x + y\| \le \|x\| - \|y\| \le \|x + y\|.
\end{equation*}
Hence,
\begin{equation} \label{eq:reverse_triangle_inequality}
    \bigl|\|x\| - \|y\|\bigr| \le \|x + y\|.
\end{equation}
which is often called the reverse triangle inequality.

\end{document}